\theoremstyle{plain}
\newtheorem{theorem}{Theorem}[section]
\newtheorem{definition}[theorem]{Definition}
\newtheorem{proposition}[theorem]{Proposition}
\newtheorem{lemma}[theorem]{Lemma}
\newtheorem{remark}[theorem]{Remark}
\numberwithin{equation}{section}
\theoremstyle{definition}
\newcommand{\R}{\ensuremath{\mathbb{R}}}
\begin{document}

\title{Global existence and blow-up of solutions to the porous medium equation with reaction and singular coefficients}
%
%
\author{Giulia Meglioli\thanks{Dipartimento di Matematica, Politecnico di Milano, Italia (giulia.meglioli@polimi.it).}}
\date{}
%

%

\maketitle              

\begin{abstract}
We study global in time existence versus blow-up in finite time of solutions to the Cauchy problem for the porous medium equation with a variable density $\rho(x)$ and a power-like reaction term posed in the one dimensional interval $(-R,R)$, $R>0$. Here the weight function is singular at the boundary of the domain $(-R,R)$, indeed it is such that $\rho(x)\sim (R-|x|)^{-q}$ as $|x|\to R$, with $q\ge0$. We show a different behavior of solutions depending on the three cases when $q>2$, $q=2$ and $q<2$. 
\end{abstract}

\bigskip
\bigskip

\noindent {\it  2010 Mathematics Subject Classification: 35B44, 35B51,  35K57, 35K59, 35K65.}

\noindent {\bf Keywords:} Porous medium equation; global existence; blow-up; sub-supersolutions; comparison principle; bounded domains.

\section{Introduction}
We are concerned with global existence and blow-up of nonnegative solutions to the Cauchy parabolic problem
\begin{equation}\label{problema}
\begin{cases}
\rho (x) u_t = (u^m)_{xx}+\rho(x) u^p & \text{in }\, (-R,R) \times (0,\tau)=:D_{\tau} \\
u=u_0& \text{in }\, (-R,R) \times \{0\}\,,
\end{cases}
\end{equation}
where $m>1$, $p>1$, $\tau>0$, $R>0$. Let us also set $D_{\infty}=:D$.

We always assume that
\begin{equation}\tag{{\it $H1$}}\label{h1}
\begin{aligned}
&\textrm{(i)} \; u_0\in L^{\infty}((-R,R)), \,\,u_0\geq 0\,\, \textrm{in}\,\, (-R,R)\,;\\
&\textrm{(ii)} \; \rho\in C((-R,R)),\, \rho>0\,\, \textrm{in}\,\, (-R,R)\,.
\end{aligned}
\end{equation}

\medskip

The problem of global solvability of nonlinear evolution problems, such as problem \eqref{problema}, or more generally its counterpart in dimension $N\ge2$, has attracted a lot of interest. We say that a problem is globally solvable in time if it admits a bounded solution for any $t\in (0,+\infty)$. On the contrary, we say that the solution to a given problem blows up in finite time when there exists a time $S>0$ such that
$$
\|u(t)\|_{\infty}\longrightarrow +\infty \quad \text{as}\,\,\,t\to S^{-}\,.
$$
\medskip

Problem \eqref{problema}, with $R=1$ and homogeneous Dirichlet boundary conditions, has been introduced in \cite{KR3} as a mathematical model of evolution of plasma temperature. Here, $u$ is the temperature, $\rho(x)$ is the particle density and $\rho(x) u^p$ represents the volumetric heating of plasma. In \cite{KR1,KR2,KR3}, the authors explain that the interest in thermal waves arises in plasma physics in various laboratory and terrestrial situations where the ambience is at rest but cannot be considered homogeneous. Although, they only study the simpler case of problem \eqref{problema} without the reaction term. They assume that $\rho$ is a positive and smooth function. Let 
$$
M=\int_{-\infty}^{+\infty}\rho(x)\,dx\,,
$$ 
then the authors investigate the behavior of solutions for both the cases of $M<\infty$ and $M=\infty$ showing remarkable differences between them. Only some remarks for the case of problem \eqref{problema} with non-zero reaction term are made in \cite[Section 4]{KR3}. Moreover, in \cite[Introduction]{KR3} a more general source term of the type $A(x) u^p$ has also been considered; however, then the authors do not treat this case.

Strictly related to the latter, is the inhomogeneous porous medium equation in higher dimension space
\begin{equation}\label{eq12}
\rho(x)u_t=\Delta u^m,\quad \text{in}\,\,\,\Omega\times(0,+\infty)\quad (m>1)\,,
\end{equation}
where $\Omega$ is a domain of $\R^N$. Equation \eqref{eq12} with the choices $\Omega\equiv\R^N$ and $\rho$ that decays at infinity as a negative power of $|x|$, is the most studied case, see e.g. \cite{Eid90, EK, I, I2, IM, KKT, KPT, KP1, KP2, KRV10, RV06, RV08, RV09,Vaz07}. Moreover, in \cite{P1} for $N\ge 3$, assuming that $\rho(x)\sim |x|^{-q}$, it is proved that equation \eqref{eq12}, for any bounded initial datum $u_0$, has infinitely many \textit{very weak} solutions if $q>2$ whereas it admits a unique \textit{very weak} solution if $q\le 2$. This different behavior of solutions determines $q=2$ as a threshold value. Concerning well-posedness of degenerate parabolic problems with possibly singular coefficients, we also refer the reader to \cite{PoT,PoPT,PoPT2} and references therein.

Equation \eqref{eq12} can be further generalized to the following weighted PME
\begin{equation}\label{eq13}
\rho_{\nu}(x)u_t=\operatorname{div} \left[\rho_{\mu}(x)\nabla(u^m)\right],\quad \text{in}\,\,\,\Omega\times(0,+\infty)\quad (m>1)\,,
\end{equation}
where $\rho_{\nu}$ and $\rho_{\mu}>0$ are two weights independent of the time variable. With no claim of generality, we refer the reader e.g. to \cite{GMPor}. Depending on the behavior of $\rho_{\nu}$ and $\rho_{\mu}$ as $|x|\to\infty$, existence and uniqueness and the asymptotic behavior of \textit{energy} solutions for large times have been addressed. 
\medskip

We also recall the well known semilinear heat equation defined as follows
\begin{equation}\label{eq14}
u_t=\Delta u+u^p\quad \text{in}\,\,\,\Omega\times(0,T),
\end{equation}
where $T>0$, $\Omega$ is a possibile unbounded domain of $\R^N$ and $p>1$. In particular, we mention the pioneering work by Fujita \cite{F} where global existence and blow-up of solutions to the Cauchy problem associated to \eqref{eq14} is investigated when $\Omega\equiv \R^N$. It is shown that
\begin{itemize}
\item finite time blow-up occurs for all nontrivial nonnegative initial data, for any $$1<p\le 1+\frac{2}{N};$$
\item global existence of solutions for sufficiently small initial data, for any $$p>1+\frac{2}{N}.$$
\end{itemize}
The value $p_c=1+\frac 2N$ is usually referred to as \textit{the Fujita exponent}. We remark that the critical case $p = p_c$ was left open by Fujita, it was proved later in \cite{H}.
\medskip

Closely related to problem \eqref{problema} it is the the following Cauchy problem
\begin{equation}\label{eq11}
\begin{cases}
\rho (x) u_t = \Delta u^m+\rho(x) u^p & \text{in }\, \R^N \times (0,\tau) \\
u=u_0& \text{in }\, \R^N \times \{0\}\,,
\end{cases}
\end{equation}
where $m>1$, $p>1$, $\tau>0$, $N\ge 3$. 
The issue of existence of global in time solutions versus blow-up in finite time for problem \eqref{eq11} has been addressed in \cite{MP1,MP2,MP3}. The authors consider a positive and continuous weight function $\rho(x)$ such that, for some $\varepsilon>0$, $0<k_1\le k_2<+\infty$ and $q\ge0$
\begin{equation}\label{eq11b}
k_1\,|x|^{-q}\,\le\,\rho(x)\,\le\,k_2|x|^{-q}\,\,\,\text{for}\,\,|x|> 1.
\end{equation}

In \cite{MP1}, for $q\in [0, 2)$, the following results have been established. 
\begin{itemize}
\item (\cite[Theorem 2.1]{MP1}) If $p>\overline p$, for a certain $\overline p=\overline p(k_1,k_2,q,m,N)>m$ and if the initial datum is sufficiently small, then solutions exist globally in time. Observe that 
$$\overline p=m+\frac{2-q}{N-q}\,\,\,\text{when}\,\,\,k_1=k_2.$$
\item (\cite[Theorem 2.4]{MP1}) For any $p>1$, for all sufficiently large initial data, solutions blow-up in finite time.
\item (\cite[Theorem 2.6]{MP1}) For $1<p<m$, for any non trivial initial data, solutions blow-up in finite time.
\item (\cite[Theorem 2.7]{MP1}) If  $m<p<\underline p$, for a certain $\underline p=\underline p(k_1,k_2,q,m,N)\le \overline p$, then, for any non trivial initial data, solutions blow-up in finite time, under specific extra assumptions on $\rho$.
$$\underline p\equiv \overline p\,\,\,\text{when}\,\,\,k_1=k_2.$$
\end{itemize}
Such results extend those stated in \cite{SGKM} for problem \eqref{eq11} with $\rho\equiv 1$, $m>1$, $p>1$ (see also \cite{GV}). 

\smallskip
Now, assume that \eqref{eq11b} holds with $q\geq 2$. In \cite{MP2} the following results have been showed.
\begin{itemize}
\item(\cite[Theorem 2.1]{MP2}) If $q=2$ and $p>m$, then, for sufficiently small initial data, solutions exist globally in time.
\item(\cite[Theorem 2.2]{MP2}) If $q=2$ and $p>m$, then, for sufficiently large initial data,  solutions blow-up in finite time.
\item(\cite[Theorem 2.3]{MP2}) If $q>2$, then, for any $p>1$, for sufficiently small initial data, solutions exist globally in time. 
\end{itemize}
\smallskip

Problem \eqref{eq11} has also been considered in \cite{GMP,GMP2}. The authors assume $\rho$ to be such that the Sobolev and the Poincar\'e inequalities hold. They consider initial data belonging to suitable $L^q$ spaces for a finite $q>1$ and they show global existence of solutions corresponding to such data with a smoothing effect on these solutions for the two cases $1<p<m$ and $1<m<p$.  
\smallskip

Furthermore, problem \eqref{eq11} has also been investigated in \cite{MT,MTS}. More precisely, they consider a class of double-nonlinear operators among which equation \eqref{eq11} is included. They show that, (see \cite[Theorem 1]{MT}) if $\rho(x)=|x|^{-q}$ with $q\in (0,2)$, for any $x\in \mathbb R^N\setminus \{0\}$, 
\[p>m+\frac{2-q}{N-q},\]
the initial datum $u_0$ is nonnegative and
$$
\int_{\mathbb R^N}\left\{u_0(x) + [u_0(x)]^{\bar q} \right\}\rho(x) dx<\delta,
$$
for some $\delta>0$ small enough and $\bar q>\frac N2 (p-m)$, then there exists a global solution of problem \eqref{eq11}. In addition, a smoothing estimate holds. On the other hand, if $\rho(x)=|x|^{-q}$
 or $\rho(x)=(1+|x|)^{-q}$ with $q\in [0,2)$, for any initial datum $u_0\not\equiv 0$ and 
 \[p<m+\frac{2-q}{N-q},\] then blow-up prevails, in the sense that there exist $\theta\in (0,1), R>0, T>0$ such that 
 \[\int_{B_R}[u(x,t)]^{\theta} \rho(x) dx\to +\infty \quad \textrm{as}\,\,\, t\to T^-.\]
Such results have also been generalized to more general initial data, decaying at infinity with a certain rate (see \cite{MTS}).

\subsection{Outline of our results}

In what follows, we assume that the function $\rho=\rho(x)$, which is usually referred to as a {\it variable density}, satisfies the following assumption: 
\begin{equation}\tag{{\it$H2$}}\label{h2}
\begin{aligned}
&\,\,\,\,\,\quad\text{for some}\,\,\, \varepsilon_0>0,\,\,\,0<c_1\le c_2<+\infty,\,\,\,q\ge0,\\
&c_1\,(R-|x|)^{-q}\,\le\,\rho(x)\,\le\,c_2(R-|x|)^{-q}\,\,\,\text{for}\,\,\,R-\varepsilon_0<|x|< R.
\end{aligned}
\end{equation}
We shall investigate separately the following three different cases depending on the exponent $q\ge0$ of \eqref{h2}. We say that the decaying rate of $1/\rho(x)$ as $|x|\to R$ is
\begin{itemize}
\item \text{Fast} if $q>2$, \quad\quad\quad\,(Case $1$);\\
\item \text{Critical} if $q=2$, \quad\,\,\,\,(Case $2$);\\
\item \text{Slow} if $0\le q<2$, \quad(Case $3$).
\end{itemize}
\bigskip

We can summarize our results as follows. Assume \eqref{h2} with $q>2$.
\begin{itemize}
\item For $p>m>1$ and for suitable, sufficiently small, initial data $u_0\in L^\infty((-R,R))$, we show the existence of global solutions $u$ such that $u\in L^\infty((-R,R) \times (0, \tau))$ for each $\tau>0$, (see Theorem \ref{teo1}).
\item For any $p>1$ and $m>1$, if $u_0$ is large enough, then the solution to problem \eqref{problema} blows up in a finite time, (see Theorem \ref{teo2}).
\item If $1<p<m$, then for any $u_0\not\equiv 0$, solutions to problem \eqref{problema} blow-up in finite time, (see Theorem \ref{teo6}).
\end{itemize}
Observe that in the latter case, a phenomenon similar to the Fujita phenomenon arises. Here the role of the critical exponent $p_c$ is covered by $p_c=m$.
\medskip

On the other hand, if we assume \eqref{h2} with $q=2$, then we show that
\begin{itemize}
\item For any $p>m$, if $u_0$ is sufficiently large, then solutions to problem \eqref{problema} blow-up in finite time, (see Theorem \ref{teo3}).
\item For any $p>m$, if $u_0$ has compact support and it is small enough, then, under suitable assumptions on $c_1$ and $c_2$, there exist global in time solutions to problem \eqref{problema}, which belong to $L^\infty((-R,R) \times (0, +\infty))$, (see Theorem \ref{teo4}).
\end{itemize}
\medskip

Now, assume \eqref{h2} with $0\le q<2$. In Theorem \ref{teo5} we show that 
\begin{itemize}
\item For $1<p<m$ and for suitable initial data $u_0\in L^\infty((-R,R))$, we show the existence of global solutions belonging to $L^\infty((-R,R) \times (0, \tau))$ for each $\tau>0.$
\item For $p>m>1$, if $u_0$ satisfies a suitable decaying condition as $|x|\to R$, then problem \eqref{problema} admits a solution in $L^\infty((-R,R)\times (0, +\infty))$. 
\end{itemize}

\smallskip

The proofs mainly rely on suitable comparison principles and properly constructed sub- and supersolutions, 
which crucially depend on the behavior near the boundary of the interval $(-R,R)$ of the density function $\rho(x)$. More precisely,  they are of the type
\begin{equation}\label{e4f}
w(x,t)=C\zeta(t)\left [ 1- \frac{\mathfrak{s}(x)}{a} \eta(t) \right ]_{+}^{\frac{1}{m-1}}\quad \textrm{for any}\,\,\, (x,t)\in (-R,R)\times [0, T),
\end{equation}
for suitable functions $\mathfrak{s}(x)$, $\zeta=\zeta(t), \eta=\eta(t)$ and constants $C>0$, $a>0$. 
The paper is organized as follows. In Section \ref{statements} we state our main results, in Section \ref{prel} we give the precise definitions of solutions, we discuss uniqueness of solution depending on the rate of explosion of $\rho(x)$ as $|x|\to R$ and we recall some auxiliary results. In Section \ref{Global} we prove Theorem \ref{teo1} and Theorem \ref{teo2}, which concern the case of fast decaying rate, i.e. $q>2$. The results obtained for the critical case $q=2$, (that is, Theorems \ref{teo3}, \ref{teo4}) are proved in Section \ref{critical}. Finally, in Section \ref{subcritical} Theorem \ref{teo5} of the slow decaying rate, $0\le q<2$, is proved .

\section{Statements of the main results}\label{statements}

Observe that, in view of \eqref{h2}-(ii), there exist $\rho_1, \rho_2\in (0,+\infty)$ with $\rho_1\le\rho_2$  such that
\begin{equation}\label{eq21}
\rho_1\le\rho(x)\le\rho_2\quad \text{for all}\,\,\,x\in [-R+\varepsilon_0, R-\varepsilon_0]\,.
\end{equation}

\subsection{Case $\mathbf{1}$: fast decaying rate}

In what follow we assume that \eqref{h2} holds with $$q>2.$$ We introduce the parameter $b\in \R$ which is defined as follows
\begin{equation}\label{eq25}
b:=q-2,
\end{equation}

The first result concerns the global existence of solutions to problem \eqref{problema}, for any $m>1$ and $p> m$. For some $\varepsilon>0$, let \begin{equation}\label{eq210}
\mathfrak{s}(x):=\begin{cases}
(R-|x|)^{-b} \quad \quad \quad \quad &\text{if}\,\,\,R-\varepsilon<|x|\le R \\ 
\dfrac{2\varepsilon-b(R-\varepsilon)+\frac{b|x|^2}{R-\varepsilon}}{2\varepsilon^{b+1}} \quad &\text{if}\,\,\,x\in [-R+\varepsilon, R-\varepsilon].
\end{cases}
\end{equation}

\begin{theorem}\label{teo1}
Let assumptions \eqref{h1} and \eqref{h2} with $q>2$ 
be satisfied. Moreover assume that

\begin{equation}\label{kbound}
\frac{c_2}{c_1}<m+\frac{m-1}{b}. 
\end{equation}
Suppose that $$p>m>1\,,$$
and that $u_0$ is small enough. Then problem \eqref{problema} admits a global solution $u\in L^\infty(D_{\tau})$ for any $\tau>0$.
More precisely, if $C>0$ is sufficiently small, $T>0$, $a> 0$ with
$$
\omega_0\le\frac{C^{m-1}}{a}\le\omega_1,
$$
for suitable $0<\omega_0<\omega_1$,
\begin{equation}\label{eq26}
\alpha=\frac{1}{p-1},\quad \beta=\frac{p-m}{p-1},
\end{equation}
\begin{equation}\label{eq27}
u_0(x) \le CT^{-\alpha} \left [ 1-\frac{\mathfrak{s}(x)}{a} \,T^{-\beta}\right ]^{-\frac{1}{m-1}} \quad \text{for any}\,\, x\in (-R,R)\,,
\end{equation}
then problem \eqref{problema} admits a global solution $u$, which satisfies the bound from above
\begin{equation}\label{eq28}
u(x,t) \le C(T+t)^{-\alpha} \left [ 1-\frac{\mathfrak{s}(x)}{a}(T+t)^{-\beta} \right ]^{-\frac{1}{m-1}} \, \text{for any}\,\, (x,t)\in D\,.
\end{equation}
\end{theorem}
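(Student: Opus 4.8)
The plan is to exhibit the function $w$ in \eqref{eq28} as a supersolution of \eqref{problema} on the region where the bracket is positive, to check that the initial datum is dominated, \eqref{eq27}, and then to invoke the comparison principle from Section~\ref{prel} to conclude $u\le w$, whence the global bound. Writing $\sigma:=\tfrac{1}{m-1}$ and $\phi(x,t):=1-\tfrac{\mathfrak{s}(x)}{a}(T+t)^{-\beta}$, so that $w=C(T+t)^{-\alpha}\phi^{-\sigma}$, the core is to verify the differential inequality $\rho w_t-(w^m)_{xx}-\rho w^p\ge 0$ wherever $\phi>0$. In the complementary near-boundary region $\phi\le 0$ the right-hand side of \eqref{eq28} is $+\infty$, so the bound is vacuous there and no verification is needed; the genuine content lives in the interior layer cut off from $|x|=R$ by the free boundary $\mathfrak{s}(x)=a(T+t)^{\beta}$, which expands toward the endpoints as $t\to\infty$.

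First I would substitute $w$ into the operator. The self-similar exponents \eqref{eq26} are chosen exactly so that the three resulting time factors, coming from $\rho w_t$, $(w^m)_{xx}$ and $\rho w^p$, all collapse to the common power $(T+t)^{-m/(p-1)}$, which can then be divided out; the identity $\sigma m=\sigma+1$ is what renders $w^m=C^m(T+t)^{-\alpha m}\phi^{-(\sigma+1)}$ a clean power of $\phi$. In the boundary layer, where $\mathfrak{s}(x)=(R-|x|)^{-b}$ and, by \eqref{h2} with $q=b+2$, one has $\rho\sim (R-|x|)^{-(b+2)}$, I would record $\mathfrak{s}'=\tfrac{b}{\xi}\mathfrak{s}$ and $\mathfrak{s}''=\tfrac{b(b+1)}{\xi^{2}}\mathfrak{s}$ with $\xi:=R-|x|$, and use $\tfrac{\mathfrak{s}}{a}(T+t)^{-\beta}=1-\phi$ to express every spatial derivative through $(1-\phi)$ and powers of $\xi$. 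The point is that all $\xi$-powers cancel against the $\xi^{-(b+2)}$ of $\rho$, leaving, after also cancelling the common factor $(1-\phi)>0$, an inequality in the single variable $\phi\in(0,1)$ whose coefficients depend only on $m,p,b$, on $C$ and $a$ through the quotient $C^{m-1}/a$, and on the local density ratio $\rho(x)(R-|x|)^{q}\in[c_1,c_2]$.

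The heart of the matter, and the main obstacle, is to show that this reduced algebraic inequality holds with the correct sign for every $\phi\in(0,1)$ and every admissible value of the density constant in $[c_1,c_2]$, since it is here that the sign of the three competing contributions must be arbitrated. The two structural hypotheses enter decisively: the smallness of $C$, encoded by the window $\omega_0\le C^{m-1}/a\le\omega_1$, is used to dominate the reaction contribution (of size $C^{p}$) by the diffusion and time contributions (of sizes $C^{m}$ and $C$), while the quantitative bound \eqref{kbound}, $c_2/c_1<m+(m-1)/b$, is precisely the threshold that keeps the inequality valid for the least favourable ratio of the two-sided density bounds of \eqref{h2}; I expect the constants $\omega_0<\omega_1$ to emerge as the interval of admissible $C^{m-1}/a$ for which both the regime $\phi\to 1$ (deep interior) and the regime $\phi\to 0$ (near the free boundary) are controlled. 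The interior region $|x|\le R-\varepsilon$, where $\mathfrak{s}$ is the quadratic of \eqref{eq210}, is handled by the same substitution but is genuinely easier, since there $\rho$ and $\mathfrak{s}$ are bounded by \eqref{eq21}; one must only check that $\mathfrak{s}$ has been glued in a $C^1$ fashion at $|x|=R-\varepsilon$, which the formula \eqref{eq210} is designed to guarantee, so that $(w^m)_{xx}$ carries no spurious singular part across the interface.

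Finally, to make the comparison rigorous when $q>2$, a case in which the endpoints $|x|=R$ sit at infinite distance for the diffusion and uniqueness is delicate, I would not compare on $(-R,R)$ directly but construct $u$ as the increasing limit of the solutions $u_n$ of \eqref{problema} posed on $(-R+\tfrac1n,R-\tfrac1n)$ with zero lateral data and datum $\min(u_0,n)$. On each truncated cylinder $w$ is finite and the comparison principle of Section~\ref{prel} applies, giving $u_n\le w$; passing to the limit yields a global solution obeying \eqref{eq28}, the initial inequality $u_0\le w(\cdot,0)$ being exactly \eqref{eq27}. Boundedness of $u$ on each $D_\tau$ then follows from the control by $w$ on the finite region together with the uniform estimates furnished by the approximation, giving $u\in L^\infty(D_\tau)$ for every $\tau>0$.
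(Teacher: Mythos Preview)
Your outline tracks the paper's strategy at the structural level (self-similar scaling, reduction to an algebraic inequality in the profile variable, the role of $C^{m-1}/a$ and of \eqref{kbound}, approximation by truncated domains), but it is built on a fatal sign error. You read the exponent in \eqref{eq27}--\eqref{eq28} as $-\tfrac{1}{m-1}$; this is a typo in the statement. The barrier actually constructed in the paper (compare the derivative computations \eqref{eq43}--\eqref{eq44}, the extension by zero via Lemma~\ref{lemext}, and the analogous Theorem~\ref{teo3}) is the compactly supported Barenblatt-type profile
\[
\bar u(x,t)=C(T+t)^{-\alpha}\Bigl[1-\tfrac{\mathfrak{s}(x)}{a}(T+t)^{-\beta}\Bigr]_{+}^{+\frac{1}{m-1}},
\]
which \emph{vanishes} (rather than blows up) near $|x|=R$. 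With your choice $w=C(T+t)^{-\alpha}\phi^{-\sigma}$, a direct check on $\{0<\phi<1\}$ shows that every term in $w_t-\rho^{-1}(w^m)_{xx}-w^p$ is strictly negative: $w_t<0$ because $\zeta'<0$ and $\phi_t=\beta(1-\phi)/(T+t)>0$ both push $\phi^{-\sigma}$ down; $(w^m)_{xx}>0$ because both $\phi_x^{2}$ and $-\phi_{xx}=b(b+1)(1-\phi)/\xi^{2}$ enter with positive coefficients when the exponent on $\phi$ is negative; and $-w^p<0$. Hence your $w$ is a \emph{sub}solution, not a supersolution, for every admissible choice of $C,a,T$, and the comparison argument cannot close.

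With the correct sign the three contributions compete, and the paper's Proposition~\ref{prop41} reduces the supersolution condition on the outer piece to the nonnegativity of $\varphi(F)=\bar\sigma(t)F-\bar\lambda(t)-\bar\gamma(t)F^{(p+m-2)/(m-1)}$ for $F\in(0,1)$; concavity of $\varphi$ in $F$ collapses this to the two endpoint checks $\varphi(0)\ge 0$ and $\varphi(1)\ge 0$, and it is precisely at $\varphi(1)\ge 0$ that \eqref{kbound} is invoked to make the diffusion coefficient $\tfrac{1}{c_2}(\tfrac{bm}{m-1}+1)-\tfrac{b}{c_1(m-1)}$ positive. The admissible window $\omega_0\le C^{m-1}/a\le\omega_1$ then arises from satisfying these endpoint inequalities simultaneously on the outer and inner pieces of $\mathfrak{s}$ (Remark~\ref{rem1}, Lemma~\ref{lem1}). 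In particular, the corrected bound \eqref{eq27} forces $u_0$ to have compact support, so your reading that near $|x|=R$ ``the bound is vacuous because the right-hand side is $+\infty$'' does not describe the actual mechanism.
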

The precise choice of parameters $C>0$, $T>0$, $a>0$ in Theorem \ref{teo1} is discussed in Remark \ref{rem1} below.

The next result concerns the blow-up of solutions in finite time, for every $p>1$ and $m>1$, provided that the initial datum is sufficiently large.

\begin{theorem}\label{teo2}
Let assumptions \eqref{h1} and \eqref{h2} with $q>2$. 
For any $$p>1,\quad m>1,$$ and for any $T>0$, if the initial datum $u_0$ is large enough, then the solution $u$ of problem \eqref{problema} blows up in a finite time $S\in (0,T]$, in the sense that
\begin{equation}\label{eq29}
\|u(t)\|_{\infty} \to \infty \text{ as } t \to S^{-}\,.
\end{equation}
More precisely, we have the following three cases
\begin{itemize}
\item[(a)] Let $p>m$. If $C>0$, $a>0$ are large enough, $T>0$,
\begin{equation}\label{eq211}
u_0(x)\ge CT^{-\frac{1}{p-1}}\left[1-\frac{\mathfrak{s}(x)}{a}\,T^{\frac{m-p}{p-1}}\right]^{\frac{1}{m-1}}_{+} \quad \text{for any}\,\, x\in (-R,R)\,,
\end{equation}
then the solution $u$ of problem \eqref{problema} blows up and satisfies the bound from below
\begin{equation} \label{eq212}
u(x,t) \ge C (T-t)^{-\frac{1}{p-1}}\left [1- \frac{\mathfrak{s}(x)}{a}\, (T-t)^{\frac{m-p}{p-1}} \right ]_{+}^{\frac{1}{m-1}}\,\, \text{for any}\,\, (x,t) \in D_S\,.
\end{equation}
\item[(b)] Let $p<m$. If $\frac{C^{m-1}}{a}>0$ and $a>0$ are big enough, $T>0$ and \eqref{eq211} holds, then the solution $u$ of problem \eqref{problema} blows up and satisfies the bound from below \eqref{eq212}.
\item[(c)] Let $p=m$. If $\frac{C^{m-1}}{a}>0$ and $a>0$ are big enough, $T>0$ and \eqref{eq211} holds, then the solution $u$ of problem \eqref{problema} blows up and satisfies the bound from below \eqref{eq212}.
\end{itemize}
\end{theorem}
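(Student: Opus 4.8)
The plan is to construct, for each of the three regimes $p>m$, $p<m$, $p=m$, an explicit subsolution of the form \eqref{e4f} and then invoke the comparison principle stated in Section \ref{prel}; once the subsolution is shown to blow up at a finite time $S\in(0,T]$, the lower bound \eqref{eq212} forces \eqref{eq29}. Concretely, in case (a) I would take $\zeta(t)=(T-t)^{-1/(p-1)}$ and $\eta(t)=(T-t)^{(m-p)/(p-1)}$ and test
\[
w(x,t)=C(T-t)^{-\frac{1}{p-1}}\left[1-\frac{\mathfrak s(x)}{a}(T-t)^{\frac{m-p}{p-1}}\right]_{+}^{\frac{1}{m-1}}
\]
against the differential inequality $\rho(x)w_t\le (w^m)_{xx}+\rho(x)w^p$ on the set where $w>0$. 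The first step is to compute $w_t$, $(w^m)_x$ and $(w^m)_{xx}$ on the positivity set; since $w^m=C^m\zeta^m[1-\tfrac{\mathfrak s}{a}\eta]_+^{m/(m-1)}$, differentiating twice in $x$ brings down $\mathfrak s'$, $\mathfrak s''$ and a $(\mathfrak s')^2$ term with exponents $\frac{m}{m-1}-1$ and $\frac{m}{m-1}-2$. The key algebraic fact I would exploit is that $\mathfrak s(x)=(R-|x|)^{-b}$ near the boundary with $b=q-2$, so $\mathfrak s'$ and $\mathfrak s''$ are controlled by powers of $(R-|x|)$ that, after multiplying by $\rho(x)^{-1}\sim (R-|x|)^{q}=(R-|x|)^{b+2}$, combine to give bounded coefficients — this is precisely where the hypothesis $q>2$ (so $b>0$) and the role of $\mathfrak s$ enter.

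After reducing both sides to comparable homogeneous expressions, the differential inequality becomes, after dividing through by a common positive factor, an inequality between polynomials in the single variable $\sigma:=\tfrac{\mathfrak s(x)}{a}\eta(t)\in[0,1)$ whose coefficients involve $m$, $p$, $C^{m-1}/a$, and the constants $c_1,c_2$ from \eqref{h2}. The plan is to show that if $C^{m-1}/a$ is large enough (and $a$ large enough to absorb the lower-order interior contributions coming from the $C^2$-gluing in \eqref{eq210} and from the reaction term on the bounded middle region where $\rho$ satisfies \eqref{eq21}) then this polynomial inequality holds for all $\sigma\in[0,1)$; the time-derivative term, which carries the factor $\tfrac{1}{p-1}$ in case (a) and the factor $\tfrac{p-m}{p-1}$ (negative when $p<m$) multiplying $\eta'$, is exactly what distinguishes the three cases and dictates why in (b) and (c) one needs $C^{m-1}/a$ large rather than $C$ large. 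In case (b), $p<m$, the exponent $(m-p)/(p-1)$ is positive so $\eta(t)=(T-t)^{(m-p)/(p-1)}\to0$, the profile opens up, and I would check the inequality keeping careful track of the sign of $\beta=(p-m)/(p-1)<0$; case (c) $p=m$ is the borderline where $\eta$ is constant and the computation simplifies, at the cost of needing the explicit smallness/largeness balance between $C$ and $a$.

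Once $w$ is verified to be a subsolution, I would choose $C$, $a$, $T$ so that the initial inequality $w(x,0)\le u_0(x)$ holds — this is exactly the hypothesis \eqref{eq211} — and apply the comparison principle to get $u\ge w$ on $D_S$, where $S\le T$ is the time at which $\zeta(t)=C(T-t)^{-1/(p-1)}\to\infty$; since $\mathfrak s(0)$ is finite and $\eta(S^-)$ is finite (in case (a)) the bracket stays bounded away from $0$ at $x=0$, so $w(0,t)\to\infty$ as $t\to S^-$, giving \eqref{eq29} with $S\in(0,T]$. The main obstacle I anticipate is the boundary analysis: ensuring that the singular coefficients $\mathfrak s''$ and $(\mathfrak s')^2$ produced by the second $x$-derivative are dominated, uniformly up to $|x|\to R$, by $\rho(x)$ times the reaction/time terms — this is a genuinely $q$-dependent estimate and is the reason $q>2$ is assumed, whereas the interior gluing region only needs the crude bounds \eqref{eq21} and the largeness of $a$. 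A secondary subtlety is the non-smoothness of $\mathfrak s$ and of $[\,\cdot\,]_+^{1/(m-1)}$ at $|x|=0$ and on the free boundary $\{\sigma=1\}$ respectively, which should be handled by the notion of (weak sub)solution recorded in Section \ref{prel}, so that the comparison principle applies without requiring classical regularity.
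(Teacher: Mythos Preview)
Your plan is correct and matches the paper's argument essentially step for step: the paper builds the same subsolution (Proposition~\ref{prop2}), reduces the differential inequality on the positivity set to $\varphi(F):=\underline\sigma(t)F-\underline\lambda(t)-\underline\gamma(t)F^{(p+m-2)/(m-1)}\le 0$ for $F\in(0,1)$, verifies it by locating the interior maximum at $F_0=\bigl[\tfrac{m-1}{p+m-2}\tfrac{\underline\sigma}{\underline\gamma}\bigr]^{(m-1)/(p-1)}$ and imposing $\varphi(F_0)\le 0$, $F_0\le 1$ (this is where the constant $K$ in \eqref{eq510} appears), handles the gluing across $|x|=R-\varepsilon$ and across the free boundary via Lemma~\ref{lemext}, and concludes by Proposition~\ref{cpsub}. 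One minor correction to your parameter discussion: in case~(a) the paper fixes $\omega=C^{m-1}/a$ \emph{arbitrarily} and then takes $C$ large, while in (b) and (c) it is $\omega$ that must be large first and then $a$ large --- your description slightly blurs this but it does not affect the strategy.
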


Observe that if $u_0$ satisfies \eqref{eq211}, then
\begin{equation*}
\operatorname{supp}u_0\supseteq \left\{x\in (-R,R)\,:\, \mathfrak{s}(x)< a T^{\frac{p-m}{p-1}}\right\}\,.
\end{equation*}
From \eqref{eq212} we can infer that
\begin{equation}\label{eq213}
\operatorname{supp}u(\cdot, t)\supseteq \left\{x\in (-R,R)\,:\, \mathfrak{s}(x) < a (T-t)^{\frac{p-m}{p-1}}\right\}\quad \textrm{for all } t\in[0,S)\,.
\end{equation}
\medskip
We remark that the supports of $u_0$ and $u(\cdot,t)$ for any $t>0$, are nonempty if one choses $$0<\varepsilon_0=\varepsilon<\frac{b}{b+2}R,$$ where $\varepsilon_0$ and $\varepsilon$ are introduced in \eqref{h2} and \eqref{eq210} respectively. This choice is always admissible. Then the constants $c_1$ and $c_2$ in assumption \eqref{h2} will be fixed accordingly. The choice of the parameters $C>0$, $T>0$ and $a>0$ is discussed in Remark \ref{rem2}.

\begin{theorem}\label{teo6}
Let assumptions \eqref{h1} and \eqref{h2} with $q>2$ be satisfied.  Suppose that $$1<p< m\,,$$ and that $u_0\in C((-R,R)), u_0(x) \not\equiv 0$. Then, for any sufficiently large $T>0$, the solution $u$ of problem \eqref{problema} blows up in a finite time $S\in (0,  T]$, in the sense that $$\|u(t)\|_{\infty} \to+ \infty \quad \textrm{as} \,\,\, t \to S^{-}.$$ More precisely, the bound from below \eqref{eq212} holds, with $b, C, a, \varepsilon$ as in Theorem \ref{teo2}-(b)\,.
\end{theorem}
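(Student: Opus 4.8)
\textbf{Proof plan for Theorem \ref{teo6}.}
The plan is to reduce the general nontrivial datum case $1<p<m$ to the large-datum blow-up result already available in Theorem \ref{teo2}-(b). The key point is that when $p<m$ the equation is ``slow diffusion with strong reaction'' in a quantitative sense: even arbitrarily small positive data will, after a short waiting time, grow and spread enough to dominate a subsolution of the form \eqref{eq212}. So the first step is to show that any solution $u$ with $u_0\in C((-R,R))$, $u_0\not\equiv 0$, becomes, at some positive time $t_0$, pointwise larger than a profile of the type appearing on the right-hand side of \eqref{eq211} (with suitably chosen constants $C,a,T$). Then one applies Theorem \ref{teo2}-(b) with initial time shifted to $t_0$, and concludes that $u$ blows up in finite time.

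First I would establish a lower bound on $u$ at a positive time. Since $u_0\not\equiv 0$ and $u_0\in C((-R,R))$, there is an interval $I=(x_1,x_2)\Subset(-R,R)$ and $\delta>0$ with $u_0\ge\delta$ on $I$. On any compact subinterval of $(-R,R)$ the coefficient $\rho$ is bounded between two positive constants by \eqref{eq21}, so there the equation \eqref{problema} is a uniformly parabolic porous medium equation with reaction; by comparison with the spatially homogeneous ODE solution (i.e. the subsolution that ignores diffusion, $\underline v(t)$ solving $\underline v'=\rho_2^{-1}\rho_1\,\underline v^p$ type bound) one gets that on a slightly smaller interval $u(\cdot,t)\ge c(t)>0$ for all small $t>0$, with $c(t)$ bounded below away from $0$ on $[t_1,t_2]$. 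More importantly, by the known property of support propagation / positivity spreading for the PME with a positive reaction term, after a fixed time the positivity set of $u(\cdot,t)$ expands; combined with comparison from below by a Barenblatt-type subsolution one obtains, at some $t_0>0$, a lower bound $u(x,t_0)\ge \mu>0$ on a prescribed compact subinterval $[-R+\varepsilon,R-\varepsilon]$.

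The second step is purely algebraic: choose the parameters in \eqref{eq211}. Recall $\mathfrak{s}$ is bounded on $[-R+\varepsilon,R-\varepsilon]$ (indeed by $\varepsilon^{-b}$), and outside that interval $\mathfrak s(x)=(R-|x|)^{-b}\to\infty$; since $p<m$, the exponent $\frac{m-p}{p-1}>0$, so for $T$ large the bracket $[1-\frac{\mathfrak s(x)}{a}T^{\frac{m-p}{p-1}}]_+$ vanishes outside a small neighborhood of the region where $\mathfrak s$ is small, i.e. the support of the comparison subsolution at time $0$ can be made to lie inside $[-R+\varepsilon,R-\varepsilon]$ by taking $a$ large (to keep $\mathfrak s/a$ small where $\mathfrak s\le\varepsilon^{-b}$) and then $T$ large. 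On that support the bracket is $\le 1$, so $CT^{-1/(p-1)}[\cdots]_+^{1/(m-1)}\le CT^{-1/(p-1)}$, which can be made $\le\mu$ by taking $T$ large (this forces $T$ large, consistent with the statement ``for any sufficiently large $T$''). Simultaneously Theorem \ref{teo2}-(b) requires $C^{m-1}/a$ and $a$ big enough; one checks these constraints are compatible — first fix $a$ large, then $C$ with $C^{m-1}/a$ large, then $T$ large enough that $CT^{-1/(p-1)}\le\mu$ and the support is contained in the good interval. With these choices, $u(\cdot,t_0)$ dominates the profile \eqref{eq211}, so Theorem \ref{teo2}-(b) applies (with the obvious time translation $t\mapsto t-t_0$) and gives the lower bound \eqref{eq212} and blow-up at some $S\in(t_0,t_0+T]\subseteq(0,\,t_0+T]$; relabelling absorbs $t_0$.

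The main obstacle I expect is Step 1: making rigorous the claim that an arbitrary nontrivial continuous datum produces, at a positive time, a lower bound by a fixed positive constant on a fixed compact subinterval that is \emph{large enough} to contain the support of the comparison subsolution. This needs the positivity-spreading property of the PME with reaction on the interior region where $\rho$ is comparable to a constant — essentially a local Harnack-type or support-expansion argument, plus care that the diffusion does not ``leak'' the mass towards the singular boundary faster than the subsolution needs. One clean way around it: use a compactly supported Barenblatt subsolution of the pure PME (which, since $p>1$ and $u\ge0$, is also a subsolution of \eqref{problema}) started from a small bump below $u_0$; the Barenblatt support grows like $t^{1/(m+1)}$ up to constants and its height decays only polynomially, so for an appropriate time $t_0$ it already covers $[-R+\varepsilon,R-\varepsilon]$ with height $\ge\mu$, and by the comparison principle (Section \ref{prel}) $u\ge$ this subsolution. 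This avoids any Harnack inequality and keeps the argument elementary, modulo checking that the required $\varepsilon$ from \eqref{eq210} can be taken small and that the Barenblatt subsolution stays inside $(-R,R)$, which holds because its support radius at $t_0$ can be chosen less than $R-\varepsilon_0$.
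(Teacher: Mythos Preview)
Your plan is on the right track --- place a subsolution from Theorem \ref{teo2}-(b) below $u$ and conclude --- but Step 1 is unnecessary and is where all your difficulties come from.

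The paper compares directly at $t=0$, with no waiting time. You yourself observe that for $p<m$ the exponent $(m-p)/(p-1)>0$, so as $T\to\infty$ the support of $\underline u(\cdot,0)$, namely $\{x:\mathfrak s(x)<aT^{(p-m)/(p-1)}\}$, shrinks, while simultaneously the amplitude $CT^{-1/(p-1)}\to 0$. Thus, once $C$, $a$ (and $\varepsilon$) are fixed satisfying the constraints of Theorem \ref{teo2}-(b), one simply takes $T$ large enough that the support of $\underline u(\cdot,0)$ is contained in an interval on which $u_0\ge\nu$ and that $CT^{-1/(p-1)}\le\nu$. This gives $\underline u(\cdot,0)\le u_0$ pointwise on all of $(-R,R)$, and Theorem \ref{teo2}-(b) together with the comparison principle finishes the proof. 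No spreading or waiting is required; in particular you do not need $u\ge\mu$ on the large fixed interval $[-R+\varepsilon,R-\varepsilon]$, only on the (arbitrarily small) support of $\underline u(\cdot,0)$.

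Your Step 1, besides being superfluous, is not justified as written: a Barenblatt profile for a constant density is not in general a subsolution of the variable-$\rho$ equation, since $(B^m)_{xx}$ changes sign across the profile; and one cannot force a Barenblatt solution to cover $[-R+\varepsilon,R-\varepsilon]$ with height $\ge\mu$ while keeping its support strictly inside $(-R,R)$, because the Barenblatt height vanishes at the boundary of its own support. These obstacles disappear once you drop the demand for positivity on a large fixed interval. (The paper does assume, writing ``without loss of generality'', that the interval where $u_0\ge\nu$ is centred at the origin; since $\mathfrak s$ is symmetric about $0$, this is the one place where a mild spreading argument could legitimately enter --- but only to obtain positivity on a small neighbourhood of $0$, a far easier task than the one you set yourself.)
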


\subsection{Case $\mathbf{2}$: critical decaying rate}

In what follows we assume that \eqref{h2} holds with $$q=2.$$ Moreover, observe that, due to \eqref{h1}, assumption \eqref{eq21} holds. 
\medskip

The first result concerns the global existence of solutions to problem \eqref{problema} for $p>m$. Let $\delta$ be a positive constant then we state the following theorem.

\begin{theorem}\label{teo3}
Assume that \eqref{h1} and \eqref{h2} for $q=2$ hold. Furthermore, suppose that $$p>m\,,$$
and that $u_0$ is small enough and has compact support. Then problem \eqref{problema} admits a global solution $u\in L^\infty((-R,R)\times (0, +\infty))$. \newline
More precisely, for any $\delta>0$, if $C>0$ and $a>0$ are small enough and
\begin{equation}\label{eq20}
u_0(x) \le CT^{-\frac{1}{p-1}} \left [ 1- \frac{(R-|x|)^{-\delta}}{a} \, T^{-\frac{p-m}{p-1}} \right ]_{+}^{\frac{1}{m-1}} \quad \text{for any}\,\, x\in (-R,R)\,,
\end{equation}
then problem \eqref{problema} admits a global solution $u\in L^{\infty}(\R^N\times (0,+\infty))$. Moreover,
\begin{equation}\label{eq21b}
u(x,t) \le C(T+t)^{-\frac{1}{p-1}} \left [ 1- \frac{(R-|x|)^{-\delta}}{a} \,(T+t)^{-\frac{p-m}{p-1}} \right ]_{+}^{\frac{1}{m-1}} \, \text{for any}\,\, (x,t)\in(-R,R) \times (0,+\infty)\,.
\end{equation}
\end{theorem}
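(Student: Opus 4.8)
The strategy is the standard supersolution construction signposted in the introduction: exhibit an explicit function of the form \eqref{e4f} that dominates $u$ at $t=0$ and is a supersolution of the equation in \eqref{problema}, then invoke the comparison principle (available in the critical regime $q=2$, as discussed in the preliminary section) to conclude that the solution stays below it for all times, hence is global and in $L^\infty$. Concretely, I would fix $\delta>0$ and set
\[
w(x,t)=C(T+t)^{-\alpha}\Bigl[1-\tfrac{\mathfrak{s}(x)}{a}(T+t)^{-\beta}\Bigr]_+^{\frac{1}{m-1}},
\qquad \mathfrak{s}(x)=(R-|x|)^{-\delta},
\]
with $\alpha=\tfrac{1}{p-1}$, $\beta=\tfrac{p-m}{p-1}$ (note $\beta>0$ since $p>m$, so the profile shrinks and the compact-support hypothesis on $u_0$ is what makes \eqref{eq20} satisfiable). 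The bound \eqref{eq20} says exactly $u_0\le w(\cdot,0)$, so by comparison it suffices to verify $\rho w_t\ge (w^m)_{xx}+\rho w^p$ in the region where $w>0$.

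The computation proceeds as in the $q>2$ case (Theorem \ref{teo1}) but with the new $\mathfrak{s}$. Write $F=F(x,t)=1-\tfrac{\mathfrak{s}(x)}{a}(T+t)^{-\beta}$, so $w=C(T+t)^{-\alpha}F^{1/(m-1)}$ on $\{F>0\}$. Differentiating, $w_t$ produces a term $-\alpha(T+t)^{-1}w$ plus a term from $\partial_t F$ that carries a factor $\beta$ and $F^{\frac{1}{m-1}-1}$. For the diffusion term, $w^m=C^m(T+t)^{-m\alpha}F^{m/(m-1)}$, and since $\tfrac{m}{m-1}>1$ one differentiates twice: the dominant contribution near $|x|\to R$ comes from $(\mathfrak{s}')^2$ times $F^{\frac{m}{m-1}-2}$, with a subdominant one from $\mathfrak{s}''$. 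Here the arithmetic of exponents is what forces $q=2$: with $\mathfrak{s}(x)\sim (R-|x|)^{-\delta}$ one has $\mathfrak{s}'\sim \delta(R-|x|)^{-\delta-1}$, $(\mathfrak{s}')^2\sim\delta^2(R-|x|)^{-2\delta-2}$, and $\mathfrak{s}''\sim \delta(\delta+1)(R-|x|)^{-\delta-2}$; dividing the diffusion term by $\rho(x)\sim c(R-|x|)^{-2}$ exactly cancels the extra $(R-|x|)^{-2}$, leaving expressions homogeneous in $(R-|x|)^{-\delta}$, i.e. controllable by powers of $\mathfrak{s}$ and hence of $F$. One then collects all terms over a common power of $F$, namely $F^{\frac{m}{m-1}-2}$, and reduces the supersolution inequality to a pointwise algebraic inequality of the schematic form
\[
\alpha\,\frac{C^{m-1}}{a}\,\mathfrak{s}\,(T+t)^{\beta-1}\,F \;+\;\beta\,\frac{C^{m-1}}{a}\,\mathfrak{s}\,(T+t)^{-1}
\;\ge\; \text{(const)}\Bigl(\tfrac{C^{m-1}}{a}\Bigr)^{?}\,\mathfrak{s}^2(\cdots)\;+\;\frac{C^{p-1}}{\ }(T+t)^{?}F^{\,\theta},
\]
where the exact powers of $(T+t)$ are arranged — this is the role of the choice $\alpha=1/(p-1)$, $\beta=(p-m)/(p-1)$ — so that all time factors match up and one is left with an inequality between the quantities $C^{p-1}$ and $C^{m-1}/a$ only. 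Choosing $C$ small (to kill the reaction term $C^{p-1}w^p$, which is higher order since $p>1$) and then $a$ large relative to $C^{m-1}$ makes it hold, uniformly on $\{F>0\}$ and for all $t\ge0$.

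The main obstacle, as in Theorem \ref{teo1}, is bookkeeping the competition between the diffusion term and the time-derivative term near the boundary $|x|\to R$, i.e. where $\mathfrak{s}(x)\to+\infty$ and $F\to$ its degenerate value: there the leading singular contributions from $(w^m)_{xx}$ (scaling like $\mathfrak{s}^2$) must be absorbed by the $w_t$ terms (scaling like $\mathfrak{s}$), which is possible only on the set $\{F>0\}$ where $\mathfrak{s}\le a(T+t)^{\beta}$, so that $\mathfrak{s}^2\le a(T+t)^\beta\mathfrak{s}$ and the extra power of $\mathfrak{s}$ is traded for a factor of $a(T+t)^\beta$ — this is precisely the mechanism that makes the construction work and fixes the admissible range of $C,a$. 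I expect a smallness condition on $\delta$ (or, equivalently, a restriction relating $\delta$ to $m$ and $p$) to be needed so that the exponent $\theta$ of $F$ on the right is nonnegative and the term is genuinely lower order; no constraint on $c_1,c_2$ beyond \eqref{h2} should be required here, in contrast with Theorem \ref{teo4}. Finally, once the differential inequality and the initial ordering are in place, the conclusion $u\le w$, the global-in-time existence, and the $L^\infty$ bound \eqref{eq21b} follow directly from the comparison principle recalled in Section \ref{prel}.
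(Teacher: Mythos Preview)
Your overall strategy---build the explicit supersolution $w$ with $\mathfrak{s}(x)=(R-|x|)^{-\delta}$, $\zeta=(T+t)^{-\alpha}$, $\eta=(T+t)^{-\beta}$, verify the differential inequality, and apply the comparison principle from Section~\ref{prel}---is exactly the paper's approach. But your reading of the mechanism is inverted in a way that would derail the computation. When you differentiate $w^m=C^m\zeta^m F^{m/(m-1)}$ twice, both resulting terms (the one with $(\mathfrak{s}')^2$ and the one with $\mathfrak{s}''$) carry a \emph{negative} sign, because $F$ is decreasing in $\mathfrak{s}$ and $\mathfrak{s}$ is convex. Hence $-\tfrac{1}{\rho}(w^m)_{xx}$ is \emph{positive}: the diffusion is not an obstacle to absorb but the very term that drives the supersolution inequality. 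In particular, the $(\mathfrak{s}')^2$ contribution (your ``$\mathfrak{s}^2$ term'') is simply discarded by bounding it below by $0$; there is no need for the trick $\mathfrak{s}^2\le a(T+t)^{\beta}\mathfrak{s}$ you describe, and no smallness restriction on~$\delta$ arises---any $\delta>0$ works.

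Because of this sign reversal your parameter prescription is backwards. After the cancellation with $\rho\sim c(R-|x|)^{-2}$, the surviving diffusion term contributes a constant multiple of $\tfrac{C^{m-1}}{a}\cdot\tfrac{\delta(\delta+1)}{c_2}R^{-\delta}$ to the supersolution side, and the final condition reduces (after the time exponents match, as you correctly note) to
\[
\frac{C^{m-1}}{a}\,\frac{m}{m-1}\,\frac{\delta(\delta+1)}{c_2}\,R^{-\delta}\;\ge\; C^{p-1}+\frac{1}{p-1}.
\]
This forces $\omega:=C^{m-1}/a$ to be bounded \emph{below} (so that the left side exceeds the fixed constant $1/(p-1)$), and then $C$ small to control $C^{p-1}$; consequently $a$ is small, not ``large relative to $C^{m-1}$'' as you wrote. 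With this correction the rest of your outline goes through: concavity of $F\mapsto \bar\sigma F-\bar\lambda-\bar\gamma F^{(p+m-2)/(m-1)}$ reduces the check to the endpoints $F=0$ and $F=1$, and Propositions~\ref{cpsup} then yield the global bound~\eqref{eq21b}.
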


Observe that if $u_0$ satisfies \eqref{eq20}, then
\begin{equation*}
\operatorname{supp}u_0\subseteq \{x\in (-R,R)\,:\, (R-|x|)^{-\delta}\leq a T^{\frac{p-m}{p-1}}\}\,.
\end{equation*}
From \eqref{eq21b} we can infer that
$$
\operatorname{supp}u(\cdot, t)\subseteq \{x\in (-R,R)\,:\, (R-|x|)^{-\delta}\leq a (T+t)^{\frac{p-m}{p-1}}\}\quad \textrm{for all } t>0\,.
$$
\medskip

The choice of the parameters $C>0, T>0$ and $a>0$ is discussed in Remark \ref{rem72}.

\bigskip

The next result concerns the blow-up of solutions in finite time, for every $p>m>1$, provided that the initial datum is sufficiently large.

Let
\[\mathfrak{t}(x):=\begin{cases}
\log(R-|x|)  &\quad \text{if}\quad  x\in (-R,-R+\varepsilon)\cup(R,R-\varepsilon), \\
& \\
\dfrac{(R-\varepsilon)^2-|x|^2+\log(\varepsilon)[2\varepsilon(R-\varepsilon)]}{2\varepsilon(R-\varepsilon)} &\quad\text{if}\quad  x\in [-R+\varepsilon,R-\varepsilon]\,.
\end{cases}\]

\begin{theorem}\label{teo4}

Let assumption \eqref{h1} and \eqref{h2} with $q=2$ hold. For any $$p>m$$ and for any $T>0$, if the initial datum $u_0$ is large enough, then the solution $u$ of problem \eqref{problema} blows up in a finite time $S\in (0,T]$, in the sense that
\begin{equation}\label{eq218}
\|u(t)\|_{\infty} \to \infty \text{ as } t \to S^{-}\,.
\end{equation}
More precisely, if $C>0$ and $a>0$ are large enough, $T>0$,
\begin{equation}\label{eq219}
u_0(x)\ge CT^{-\frac{1}{p-1}}\left[1-\frac{\mathfrak{t}(x)}{a}\,T^{\frac{p-m}{p-1}}\right]^{\frac{1}{m-1}}_{+}\,, \quad \text{for any}\,\, x\in (-R,R)\,,
\end{equation}
then the solution $u$ of problem \eqref{problema} blows up and satisfies the bound from below
\begin{equation} \label{eq220}
u(x,t) \ge C (T-t)^{-\frac{1}{p-1}}\left [1- \frac{\mathfrak{t}(x)}{a}\, (T-t)^{\frac{p-m}{p-1}} \right ]_{+}^{\frac{1}{m-1}}, \quad \text{for any}\,\, (x,t) \in D_S.
\end{equation}
\end{theorem}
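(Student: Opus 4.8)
The plan is to prove Theorem \ref{teo4} by a comparison argument: exhibit an explicit subsolution of problem \eqref{problema} of the type \eqref{e4f}, with $\mathfrak{s}$ replaced by $\mathfrak{t}$, and then invoke the comparison principle recalled in Section \ref{prel}. Concretely, set
$$\alpha=\frac{1}{p-1},\qquad \beta=\frac{p-m}{p-1},$$
so that $\alpha,\beta>0$ and the algebraic identities $p\alpha=\alpha+1$, $\beta=(p-m)\alpha$ hold, and for constants $C>0$, $a>0$ and a time $T>0$ still to be fixed define
$$w(x,t):=C(T-t)^{-\alpha}\Bigl[\,1-\frac{\mathfrak{t}(x)}{a}\,(T-t)^{\beta}\,\Bigr]_{+}^{\frac{1}{m-1}},\qquad (x,t)\in(-R,R)\times[0,T).$$
Then \eqref{eq219} is exactly the initial inequality $u_0\ge w(\cdot,0)$, and \eqref{eq220} is the assertion $u\ge w$ on $D_S$. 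Since $(T-t)^{-\alpha}\to+\infty$ as $t\to T^{-}$, while on a fixed neighbourhood of $|x|=R$ one has $\mathfrak{t}(x)=\log(R-|x|)<0$, hence the bracket is $\ge 1$ there, we get $\|w(t)\|_{\infty}\to+\infty$ as $t\to T^{-}$. Consequently, once $u\ge w$ is established on the existence interval, $u$ cannot be continued past $T$ and must blow up at some $S\in(0,T]$, which yields \eqref{eq218} together with \eqref{eq220}.

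The core of the argument is to verify that $w$ is a subsolution, i.e. that
$$\mathcal{L}w:=\rho(x)\,w_t-(w^m)_{xx}-\rho(x)\,w^{p}\le 0$$
in the weak sense on $(-R,R)\times(0,T)$. First I would compute $w_t$, $(w^m)_x$ and $(w^m)_{xx}$ in terms of $\phi:=1-\frac{\mathfrak{t}(x)}{a}(T-t)^{\beta}$, using $p\alpha=\alpha+1$ and $\beta=(p-m)\alpha$ to line up the leading powers of $(T-t)$ (note $\rho w_t$ and $\rho w^p$ both carry $(T-t)^{-\alpha-1}$). Then I would split the verification into two regions. In the boundary region $R-\varepsilon<|x|<R$, where $\mathfrak{t}(x)=\log(R-|x|)$, one has $|\mathfrak{t}'(x)|^{2}=(R-|x|)^{-2}$ and $\mathfrak{t}''(x)=-(R-|x|)^{-2}$, while \eqref{h2} with $q=2$ gives $c_{1}(R-|x|)^{-2}\le\rho(x)\le c_{2}(R-|x|)^{-2}$: this is exactly the critical scaling, under which every term of $\mathcal{L}w$ carries the common weight $(R-|x|)^{-2}$, so the sign of $\mathcal{L}w$ collapses to an inequality between expressions that are polynomial in $\phi$, $\tfrac{C^{m-1}}{a}$, $(T-t)$ and the quotient $c_{2}/c_{1}$. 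In the interior region $|x|\le R-\varepsilon$, the density is comparable to constants by \eqref{eq21} and $\mathfrak{t}$ is the explicit quadratic, so $\mathfrak{t}'(x)=-\frac{x}{\varepsilon(R-\varepsilon)}$ and $\mathfrak{t}''(x)=-\frac{1}{\varepsilon(R-\varepsilon)}$ are bounded; there I would use $|x|\le R$ and $(T-t)\le T$ and absorb the bounded diffusion terms into the reaction term $\rho w^{p}$ by taking $\tfrac{C^{m-1}}{a}$ and $a$ large and $T$ small — this is the point where the hypothesis $p>m$ enters. Across the interface $|x|=R-\varepsilon$ the profile $\mathfrak{t}$ is $C^{1}$, and $\mathfrak{t}''$ jumps from $-(R-|x|)^{-2}$ on the boundary side to $-(\varepsilon(R-\varepsilon))^{-1}$ on the interior side — a jump in the direction that only increases $(w^m)_{xx}$ — so $(w^m)_{xx}$ contains no singular part and the gluing is admissible; across the free boundary $\{\phi=0\}$ the exponent $\frac{m}{m-1}>1$ ensures $w^m\in C^1$ and the standard Barenblatt-type check shows $w$ is a weak subsolution there as well.

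I expect the main obstacle to be the boundary-region estimate: one must verify that the critical balance genuinely produces $\mathcal{L}w\le 0$ (and not the reverse sign), that is, that after cancelling $(R-|x|)^{-2}$ the residual inequality can be forced by choosing $a$ large — so that the truly diffusive contributions, of order $(T-t)^{2\beta}/a^{2}$ and $(T-t)^{\beta}/a$, become negligible — and $T$ small; the quantitative form of this reduction is what pins down the thresholds ``$C$ large'' and ``$a$ large'' in the statement. A secondary, purely bookkeeping difficulty is reconciling the distinct powers of $(T-t)$ appearing in $\mathcal{L}w$ ($-\alpha-1$ from $\rho w_t$ and $\rho w^p$, and $-m\alpha+2\beta$, $-m\alpha+\beta$ from $(w^m)_{xx}$) by absorbing the subleading ones using $(T-t)\le T\le 1$. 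The parameters are then fixed in the order: first $\varepsilon=\varepsilon_0$ small, then $\tfrac{C^{m-1}}{a}$ and $a$ large, then $T$ small, after which $u_0$ as in \eqref{eq219} exists. With $w$ a subsolution lying below $u_0$, the comparison principle of Section \ref{prel} — available in the critical regime because $\rho$ blows up at most like $(R-|x|)^{-2}$ — gives $u\ge w$ on the whole existence interval, and letting $t\to S^{-}$ yields \eqref{eq218} and \eqref{eq220}.
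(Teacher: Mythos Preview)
Your approach is essentially the paper's: build a self-similar subsolution of the form \eqref{e4f} with profile $\mathfrak{t}$, verify the differential inequality separately in the boundary zone $R-\varepsilon<|x|<R$ (exploiting the critical balance $(\mathfrak{t}')^{2},\,|\mathfrak{t}''|\sim(R-|x|)^{-2}\sim\rho$) and in the core $|x|\le R-\varepsilon$ (using \eqref{eq21}), glue across $|x|=R-\varepsilon$ and across the free boundary via the $C^{1}$-matching of $w^{m}$ (Lemma \ref{lemext}), and conclude by the comparison principle for compactly supported subsolutions, Proposition \ref{cpsub}.

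The one point where your plan diverges from both the statement and the paper's argument is the order in which parameters are fixed. You propose to choose $\varepsilon$, then $C^{m-1}/a$ and $a$, and \emph{then} take $T$ small so that $(T-t)\le T\le 1$ absorbs the mismatched powers $-m\alpha+\beta$ and $-m\alpha+2\beta$ coming from $(w^{m})_{xx}$. But the theorem asserts blow-up for \emph{every} $T>0$, with $C$ and $a$ chosen large afterwards; $T$ is a datum, not a free parameter. The paper does not make the diffusive contributions small: it keeps them and recasts the pointwise check as the one-variable inequality $\varphi(F):=\underline\sigma(t)\,F-\underline\lambda(t)-\underline\gamma(t)\,F^{(p+m-2)/(m-1)}\le 0$ on $F\in(0,1)$, then locates the maximizer $F_{0}$ and imposes $\varphi(F_{0})\le 0$, $F_{0}\le 1$ (Proposition \ref{prop71}). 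With the self-similar choice of $\zeta,\eta$ these reduce to the purely algebraic conditions \eqref{eq724}--\eqref{eq725} of Remark \ref{rem71}, formulated without reference to $T$, which are then satisfied by taking $C$ large with $\omega=C^{m-1}/a$ in a fixed range (Lemma \ref{lemma71}). Your plan to render the diffusion ``negligible'' by taking $a$ large and $T$ small would deliver only the weaker conclusion ``for all sufficiently small $T$''; the $\varphi(F)$-maximization is the missing device that removes the $T$-dependence.
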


\medskip
Observe that if $u_0$ satisfies \eqref{eq219}, then
\begin{equation*}
\operatorname{supp}u_0\supseteq \{x\in (-R,R)\,:\, \mathfrak{t}(x)< a T^{-\frac{p-m}{p-1}}\}\,.
\end{equation*}
From \eqref{eq220} we can infer that
\begin{equation}\label{eq221}
\operatorname{supp}u(\cdot, t)\supseteq \{x\in (-R,R)\,:\, \mathfrak{t}(x) < a (T-t)^{-\frac{p-m}{p-1}}\}\quad \textrm{for all } t\in[0,S)\,.
\end{equation}
\medskip
The choice of the parameters $C>0, T>0$ and $a>0$ is discussed in Remark \ref{rem71}.

\subsection{Case $\mathbf{3}$: slow decaying rate}

In what follows we assume that \eqref{h2} holds with $$0\le q<2.$$ The next result concerns the global existence of solutions to problem \eqref{problema} for any $p>1$ and $m>1$, $p\neq m$. Let us introduce the parameter $d\in \R$ such that
\begin{equation}\label{hd}
0< d<\min \{2-q\,,\,\,1\}\,.
\end{equation}

\begin{theorem}\label{teo5}
Let assumptions \eqref{h1} and \eqref{h2} be satisfied with $0\le q<2$. Suppose that $$1<p<m\,,\quad \text{or}\,\,\, p> m\geq 1\,,$$
and that $u_0$ is small enough. Then problem \eqref{problema} admits a global solution $u\in L^\infty(D_{\tau})$ for any $\tau>0$.
More precisely, we have the following cases.
\begin{itemize}
\item[(a)]\, Let $1<p<m$. If $C>0$ is big enough, $T>1$, $\alpha> 0$,
\begin{equation}\label{eq215}
u_0(x) \le CT^{\alpha} \left (R-|x| \right )^{\frac{d}{m}} \quad \text{for any}\,\, x\in (-R,R)\,,
\end{equation}
then problem \eqref{problema} admits a global solution $u$, which satisfies the bound from above
\begin{equation}\label{eq216}
u(x,t) \le C(T+t)^{\alpha} \left (R-|x| \right )^{\frac{d}{m}} \, \text{for any}\,\, (x,t)\in D\,.
\end{equation}
\item[(b)]\, Let $p> m \geq  1$. If $C>0$ is small enough, $T>0$ and \eqref{eq215} holds with $\alpha=0$, then problem \eqref{problema} admits a global solution $u\in L^{\infty}(D)$, which satisfies the bound from above \eqref{eq216} with $\alpha=0$.
\end{itemize}
\end{theorem}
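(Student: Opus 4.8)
The plan is to construct an explicit supersolution of the form $w(x,t)=C(T+t)^{\alpha}(R-|x|)^{d/m}$ (with $\alpha>0$ in case (a) and $\alpha=0$ in case (b)) and then invoke the comparison principle of Section \ref{prel} to conclude $u\le w$, which immediately gives global existence in $L^\infty(D_\tau)$ together with the bounds \eqref{eq216}. The verification reduces to a pointwise differential inequality. Writing $\psi(x)=(R-|x|)^{d/m}$, one has $\rho(x)w_t = \alpha C(T+t)^{\alpha-1}\rho(x)\psi(x)$, and since for $x$ near $R$, $(\psi)^m = (R-|x|)^d$, the term $(w^m)_{xx}$ is a negative multiple of $(R-|x|)^{d-2}$ (because $0<d<1$ makes the exponent $d-2<-1$, so the profile is concave up to a sign: precisely $(w^m)_{xx}= C^m(T+t)^{\alpha m}\,\frac{d}{m}\left(\frac{d}{m}-1\right)(R-|x|)^{d/m-2}\cdot(\text{positive})$ — note $\frac dm-1<0$). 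Hence $(w^m)_{xx}\le 0$ for $x$ near the boundary, and using $\rho(x)\sim (R-|x|)^{-q}$ from \eqref{h2} the reaction term satisfies $\rho(x)w^p \sim C^p(T+t)^{\alpha p}(R-|x|)^{dp/m-q}$. For $w$ to be a supersolution near the boundary it suffices that
\begin{equation*}
\alpha C(T+t)^{\alpha-1}\rho\psi \ge (w^m)_{xx}+\rho w^p,
\end{equation*}
and since $(w^m)_{xx}\le0$ there, it is enough to dominate the reaction term; comparing powers of $(R-|x|)$ this needs $dp/m-q\ge -q$, i.e. automatically true, while comparing the remaining constants fixes the smallness ($C$ small, case (b)) or largeness ($C$ large, case (a)) of $C$ and the sign of $\alpha$. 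On the compact core $[-R+\varepsilon,R-\varepsilon]$, where $\rho$ is bounded between $\rho_1$ and $\rho_2$ by \eqref{eq21}, $w$ is smooth and bounded below away from $0$, so the inequality is an elementary estimate in the parameters; the piecewise profile $\mathfrak{s}$-type gluing (analogous to \eqref{eq210}) is not needed here since $\psi$ itself is already $C^1$ up to the matching, but one must check the matching of the first derivative across $|x|=R-\varepsilon$, which is where the precise form of $\psi$ on the core would be chosen (a quadratic, as in \eqref{eq210}).

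The case split $1<p<m$ versus $p>m$ enters through the sign and size of $\alpha$: for $p<m$ the amplitude $C(T+t)^\alpha$ must grow ($\alpha>0$) because the reaction $w^p$ is weaker than the diffusion-scaling would otherwise allow, and we need $T>1$ so that $(T+t)^{\alpha-1}$ stays comparable; for $p>m$ a stationary supersolution ($\alpha=0$) works provided $C$ is small, and this is exactly the regime covered by Theorem \ref{teo1} and Theorem \ref{teo3} in the faster-decay cases, so the argument is structurally the same with the exponent $q<2$ making the boundary term harmless. One subtlety is that $w$ does not have compact support (it vanishes only at $|x|=R$), so we do not get the support propagation statements of the critical/fast cases; accordingly no initial smallness of the support is assumed, only the pointwise bound \eqref{eq215}.

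The main obstacle will be the correct bookkeeping of powers of $(R-|x|)$ near the boundary together with the constants $c_1,c_2$ from \eqref{h2}: one must verify that the chosen exponent $d/m$, with $0<d<\min\{2-q,1\}$, simultaneously (i) makes $(w^m)_{xx}$ have the right sign, (ii) keeps $\rho(x)w^p$ and $\rho(x)w_t$ integrable/comparable near $|x|=R$, and (iii) is compatible with the $C^1$-gluing to the bounded core. The condition $d<2-q$ is precisely what guarantees $dp/m-q > -q$ does not overwhelm the diffusion term after accounting for the $(R-|x|)^{-q}$ weight on both sides, and $d<1$ is what forces the concavity sign $(w^m)_{xx}\le0$. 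Once the differential inequality is established in each region and the boundary behaviour of $w$ dominates that of $u_0$ via \eqref{eq215}, the comparison principle (valid here for $q<2$, where uniqueness holds as discussed in Section \ref{prel}) closes the proof; the $\alpha=0$ sub-case (b) is then a direct specialization with no growth needed.
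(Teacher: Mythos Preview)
Your ansatz $w(x,t)=C(T+t)^{\alpha}(R-|x|)^{d/m}$ is exactly the one the paper uses, but the mechanism you describe does not work. You treat the diffusion term as merely ``harmless'' (i.e., $(w^m)_{xx}\le 0$) and then try to control the reaction by the time derivative: ``since $(w^m)_{xx}\le0$ there, it is enough to dominate the reaction term'' by $\rho w_t$. This fails outright. In case~(b) you set $\alpha=0$, so $w_t\equiv 0$ and you would be asking $0\ge \rho w^p$, which is impossible. In case~(a) the inequality $w_t\ge w^p$ reads $\alpha(T+t)^{\alpha-1-\alpha p}\ge C^{p-1}(R-|x|)^{d(p-1)/m}$; since $\alpha(1-p)-1<0$ the left side tends to $0$ as $t\to\infty$, so this cannot hold for all $t>0$ either.

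The correct mechanism (and the one in the paper) is the reverse: the time derivative is simply nonnegative and is discarded, while the \emph{diffusion} term is the one that dominates the reaction. First, note that $(w^m)=C^m\zeta^m(R-|x|)^{d}$, so $(w^m)_{xx}=d(d-1)C^m\zeta^m(R-|x|)^{d-2}$ (your exponent $d/m-2$ is a slip). Hence
\[
-\tfrac{1}{\rho}(w^m)_{xx}=d(1-d)\,C^m\zeta^m\,\frac{(R-|x|)^{d-2}}{\rho},
\]
and using \eqref{h2} one has $(R-|x|)^{d-2}/\rho\gtrsim (R-|x|)^{d-2+q}$, which is bounded \emph{below} by a positive constant on all of $(-R,R)$ precisely because $d<2-q$ makes the exponent negative. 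Thus $-\tfrac{1}{\rho}(w^m)_{xx}\ge \delta\, d(1-d)\,C^m\zeta^m$ for some $\delta>0$, while $w^p\le C^p\zeta^p R^{dp/m}$. The supersolution inequality then reduces to $d(1-d)\,\delta\, C^{m-p}\zeta^{m-p}\ge R^{dp/m}$, which for $p<m$ holds for $C$ large (with $\zeta=(T+t)^{\alpha}\ge T^{\alpha}$, $\alpha>0$, $T>1$), and for $p>m$ holds for $C$ small (with $\zeta\equiv 1$). This is where the dichotomy between (a) and (b) actually comes from; it is not driven by the sign of $w_t$.

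Two smaller points. No piecewise gluing is needed here: the single profile $(R-|x|)^{d/m}$ works on the whole interval, the only non-smooth point being $x=0$, handled by a Kato-type argument since $(w^m)_r(0,t)\le 0$. Also, your final remark that ``uniqueness holds for $q<2$'' is the wrong way round: for $q<2$ the paper records nonuniqueness (Proposition~\ref{nonuniq}); the comparison you need is Proposition~\ref{cpsup}, which bounds the \emph{minimal} solution by any supersolution and does not rely on uniqueness.
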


Note that in Theorem \ref{teo5} we do not require that $\operatorname{supp}\, u_0$ is compact.
\medskip

The choice of the parameters $C>0$, $T>0$ and $a>0$ is discussed in Remark \ref{rem71}.

\section{Preliminaries and well-posedness results}\label{prel}
In this section we give the precise definition of solutions of problem \eqref{problema} and we address well-posedness of problem \eqref{problema} depending on the exponent $q\ge0$ of assumption \eqref{h2}. Furthermore, we recall some auxiliary results that will be used in the subsequent sections. 

\medskip

Throughout the paper we deal with {\em very weak} solutions to problem \eqref{problema} according to the following definition. 

\begin{definition}\label{def1}
Let $I:=(-R,R)$, $u_0\in L^{\infty}(I)$ with $u_0\ge0$. Let $\tau>0$, $p>1$, $m>1$. By a solution to problem \eqref{problema} in $I\times(0,\tau)$ we mean any nonnegative function $u\in L^{\infty}(I\times(0,S))$, for any $ S<\tau$ such that
\begin{equation*}\label{eq31}
\begin{aligned}
-\int_{0}^{\tau}\int_{I} \rho(x) u \varphi_t dx\,dt &= \int_{I}\rho(x) u_0(x) \varphi(x,0) \,dx \\ &+ \int_{0}^{\tau}\int_{I}  u^m \varphi_{xx} \,dx\,dt \\ &+ \int_{0}^{\tau}\int_{I} \rho(x) u^p \varphi \,dx\,dt
\end{aligned}
\end{equation*}
for any $\varphi \in C_c^{\infty}( \overline{I}\times[0,\tau))$, $\varphi\ge0$, $\varphi(-R,t)=\varphi(R,t)=0$ for any $t\in[0,\tau)$. Subsolutions (respectively supersolutions) of \eqref{problema} are defined replacing $"="$ by $"\le"$ (respectively $"\ge"$) in equality \eqref{eq31}.
\end{definition}

For every $0<\delta<R$, define $I_{\delta}=(-R+\delta,R-\delta)$. Observe that $$I_{\delta}\longrightarrow I\quad \text{as}\,\,\,\delta\to 0.$$ Then we consider the auxiliary problem
\begin{equation}\label{eq32}
\begin{cases}
 u_t=\frac 1{\rho(x)}(u^m)_{xx} +u^p & \text{ in } I_{\delta}\times(0,\tau) \\
u=0  &  \text{ on } \{-R_{\delta}, R_{\delta}\} \times(0,\tau)\\
u=u_0 &  \text{ in } I_{\delta}\times\{0\}\,.\\
\end{cases}
\end{equation}

\begin{proposition}\label{prop31}
Let hypothesis \eqref{h1} be satisfied. Then there exists a solution $u$ to problem \eqref{eq32} with
$$\tau\geq \tau_{\delta}:=\frac{1}{(p-1)\|u_0\|_{L^\infty(I_{\delta})}^{p-1}}.$$
\end{proposition}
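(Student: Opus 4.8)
The plan is to obtain the solution of the regularized problem \eqref{eq32} on a fixed time interval by a standard fixed-point / monotone iteration argument, using as a yardstick the solution of the reaction ODE $\dot V = V^p$, $V(0)=\|u_0\|_{L^\infty(I_\delta)}$, whose explosion time is exactly $\tau_\delta = \big[(p-1)\|u_0\|_{L^\infty(I_\delta)}^{p-1}\big]^{-1}$. First I would fix $\delta\in(0,R)$ and note that on the compact set $\overline{I_\delta}$ assumption \eqref{h1}-(ii) gives $0<\rho_\delta^-\le\rho(x)\le\rho_\delta^+<\infty$, so that $1/\rho(x)$ is a bounded continuous coefficient and equation \eqref{eq32} is a uniformly parabolic (in the degenerate-PME sense) quasilinear problem on a bounded domain with homogeneous Dirichlet data. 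For such problems the existence theory for bounded weak solutions is classical: I would invoke the $L^\infty$-theory for the porous medium equation with a Lipschitz (here even power-type, hence locally Lipschitz on bounded ranges) lower-order term, e.g. via the usual approximation $u^m \leadsto (u+\kappa)^m - \kappa^m$ or $\Phi_\kappa(u)$ with $\Phi_\kappa'\ge\kappa>0$, solving the resulting nondegenerate uniformly parabolic problems by standard quasilinear theory, deriving uniform $L^\infty$ bounds, and passing to the limit $\kappa\to0$ to get a very weak solution in the sense of Definition \ref{def1} (restricted to $I_\delta$).

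The key point, and the only thing that is really asserted in the statement, is the quantitative lower bound on the existence time, $\tau\ge\tau_\delta$. For this I would use the comparison principle together with an explicit supersolution that is spatially constant. Let $V$ solve $V'(t)=V(t)^p$ on $[0,\tau_\delta)$ with $V(0)=\|u_0\|_{L^\infty(I_\delta)}$; explicitly $V(t)=\big(\|u_0\|_{L^\infty(I_\delta)}^{-(p-1)}-(p-1)t\big)^{-1/(p-1)}$, which is finite precisely for $t<\tau_\delta$. Since $V$ does not depend on $x$, one has $(V^m)_{xx}=0$, so $V_t = V^p = \tfrac1{\rho(x)}(V^m)_{xx}+V^p$, i.e. $V$ is a (super)solution of the PDE in \eqref{eq32}; moreover $V\ge0\ge$ boundary values on $\partial I_\delta$ and $V(x,0)=\|u_0\|_{L^\infty(I_\delta)}\ge u_0(x)$. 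By the comparison principle for \eqref{eq32} (which holds on the bounded domain $I_\delta$ with the bounded coefficient $1/\rho$ — this is the standard comparison for bounded very weak sub/supersolutions, and I would either cite it from the auxiliary results promised in Section \ref{prel} or prove it by the classical Oleinik-type duality argument), any solution $u$ obtained above satisfies $0\le u(x,t)\le V(t)$ for all $t<\tau_\delta$. Hence $u$ stays bounded on $I_\delta\times(0,S)$ for every $S<\tau_\delta$, so the maximal existence time is at least $\tau_\delta$, which is the claim.

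The main obstacle I anticipate is purely technical rather than conceptual: making the construction of the bounded very weak solution of the nondegenerate approximations rigorous and checking that the limit satisfies the integral identity of Definition \ref{def1} with the correct (now bounded) weight $\rho$ restricted to $I_\delta$, together with having a clean statement of the comparison principle valid in this class. Both are standard for the PME on bounded domains with bounded measurable/continuous coefficients and a locally Lipschitz reaction, but the power term $u^p$ is only locally Lipschitz, so one must first cut it off at a level above the a priori barrier $V$ and remove the cutoff afterwards — the barrier $V$ itself guarantees this is legitimate on $[0,S]$ for each $S<\tau_\delta$. Once the comparison principle and the approximation scheme are in place, the estimate $\tau\ge\tau_\delta$ is immediate from the spatially constant supersolution $V$, and no further work is needed.
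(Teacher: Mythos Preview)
Your proposal is correct and is precisely the standard argument; the paper itself does not spell out a proof but simply refers to \cite[Proposition~3.3]{MP1}, where exactly this strategy (regularization of the degenerate operator, comparison with the spatially constant ODE solution $V(t)=\big(\|u_0\|_{L^\infty(I_\delta)}^{-(p-1)}-(p-1)t\big)^{-1/(p-1)}$ as a supersolution, uniform $L^\infty$ bounds up to $\tau_\delta$, and passage to the limit) is carried out. The comparison principle you need is stated immediately after as Proposition~\ref{prop32} (citing \cite{ACP}), so your plan matches the paper's intended route exactly.
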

The proof of Proposition \ref{prop31} can be found in \cite[Proposition 3.3]{MP1}. Moreover, the following comparison principle for problem \eqref{eq32} holds (see \cite{ACP} for the proof).
\begin{proposition}\label{prop32}
Let assumption \eqref{h1} holds.
If $u$ is a subsolution of problem \eqref{eq32} and $v$ is a supersolution of \eqref{eq32}, then
$$u\le v \quad  \textrm{a.e. in } \, I_{\delta} \times (0,\tau).$$
\end{proposition}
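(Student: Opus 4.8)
The plan is to run the classical duality argument, in the spirit of the uniqueness theory for the porous medium equation: one tests the (subtracted) weak inequality for $u-v$ against the solution of a suitably regularised \emph{backward} linear parabolic problem, and then lets the regularisation disappear. Fix $0<\delta<R$ and $0<S<\tau$. On the compact set $\overline{I_\delta}$ the density $\rho$ is continuous and strictly positive by \eqref{h1}, so $0<\rho_1\le\rho\le\rho_2<\infty$ there, and $0\le u,v\le M:=\max\{\|u\|_{\infty},\|v\|_{\infty}\}$. Write $w:=u-v$ and linearise the nonlinearities: set
\[
a(x,t):=\frac{u^m-v^m}{u-v},\qquad b(x,t):=\frac{u^p-v^p}{u-v}
\]
(both defined to be $0$ where $u=v$), so that $0\le a\le mM^{m-1}$ and $0\le b\le pM^{p-1}$ a.e.\ in $I_\delta\times(0,S)$, the \emph{nonnegativity} of $b$ being a consequence of the monotonicity of $s\mapsto s^p$ on $[0,\infty)$. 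Subtracting the defining inequalities of the subsolution $u$ and of the supersolution $v$ (Definition \ref{def1}, adapted to $I_\delta$), integrated over $I_\delta\times(0,S)$ against a nonnegative $\varphi\in C^1(\overline{I_\delta}\times[0,S])$ vanishing on $\partial I_\delta$, the initial terms cancel (same $u_0$), and one obtains — for a.e.\ $S$, the admissibility of such test functions and the appearance of the terminal term following from the usual cut-off in time —
\begin{equation}\label{eq:dualkey}
\int_{I_\delta}\rho(x)\,w(x,S)\,\varphi(x,S)\,dx\;\le\;\int_0^S\!\!\int_{I_\delta} w\,\bigl(\rho\,\varphi_t+a\,\varphi_{xx}+\rho\,b\,\varphi\bigr)\,dx\,dt.
\end{equation}

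The idea is then to choose $\varphi$ so as to annihilate the leading part of the right-hand side. Fix $\theta\in C_c^{\infty}(I_\delta)$ with $\theta\ge0$; pick smooth $\rho_n\to\rho$ uniformly on $\overline{I_\delta}$ with $\rho_1/2\le\rho_n\le2\rho_2$, and smooth $a_n$ with $\tfrac1n\le a_n\le mM^{m-1}+1$ and
\[
\int_0^S\!\!\int_{I_\delta}\frac{(a-a_n)^2}{a_n}\,dx\,dt\;\longrightarrow\;0,
\]
which can be arranged by mollifying $a$, clipping into $[\tfrac1n,\,mM^{m-1}+1]$, and sending the mollification parameter to $0$ fast enough. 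I would then solve the regularised backward problem
\[
\rho_n\,\varphi_{n,t}+a_n\,\varphi_{n,xx}=0 \ \text{ in } I_\delta\times(0,S),\qquad
\varphi_n=0 \ \text{ on } \partial I_\delta\times(0,S),\qquad
\varphi_n(\cdot,S)=\theta.
\]
After reversing time this is uniformly parabolic with smooth coefficients, so $\varphi_n$ is smooth; since it carries no zeroth-order term and no source, the maximum principle gives $0\le\varphi_n\le\|\theta\|_{\infty}$. The essential bound comes from dividing the equation by $\rho_n$, multiplying by $-\varphi_{n,xx}$ and integrating over $I_\delta$: because $\varphi_{n,t}=0$ on $\partial I_\delta$, the boundary terms drop and $\tfrac12\frac{d}{dt}\int_{I_\delta}\varphi_{n,x}^2=\int_{I_\delta}\frac{a_n}{\rho_n}\varphi_{n,xx}^2$, whence, integrating in time and using $\varphi_n(\cdot,S)=\theta$,
\[
\int_0^S\!\!\int_{I_\delta} a_n\,\varphi_{n,xx}^2\,dx\,dt\;\le\;\rho_2\int_{I_\delta}\theta_x^2\,dx,
\qquad
\int_0^S\!\!\int_{I_\delta}|\varphi_{n,t}|\,dx\,dt\;\le\;C,
\]
both uniformly in $n$ (the second from the equation and Cauchy--Schwarz).

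Inserting $\varphi=\varphi_n$ in \eqref{eq:dualkey} and using $\rho_n\varphi_{n,t}+a_n\varphi_{n,xx}=0$, the integrand on the right becomes $w\bigl[(\rho-\rho_n)\varphi_{n,t}+(a-a_n)\varphi_{n,xx}+\rho\,b\,\varphi_n\bigr]$. The first two terms are negligible as $n\to\infty$: indeed $|w|\le2M$, $\|\rho-\rho_n\|_{\infty}\to0$ and $\int\!\int|\varphi_{n,t}|\le C$ handle the first, while the weighted Cauchy--Schwarz inequality
\[
\int_0^S\!\!\int_{I_\delta}|a-a_n|\,|\varphi_{n,xx}|
\;\le\;\Bigl(\int_0^S\!\!\int_{I_\delta}\frac{(a-a_n)^2}{a_n}\Bigr)^{1/2}\Bigl(\int_0^S\!\!\int_{I_\delta} a_n\,\varphi_{n,xx}^2\Bigr)^{1/2}\;\longrightarrow\;0
\]
handles the second. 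For the reaction term, $b\ge0$ and $\varphi_n\ge0$ give $\rho\,b\,w\,\varphi_n\le\rho\,b\,w^{+}\varphi_n\le\rho_2\,pM^{p-1}\|\theta\|_{\infty}\,w^{+}$. Passing to the limit and then letting $\theta$ increase to the indicator of $\{w(\cdot,S)>0\}$ (with $0\le\theta\le1$), I would reach, for a.e.\ $S\in(0,\tau)$,
\[
\int_{I_\delta}\rho(x)\,w^{+}(x,S)\,dx\;\le\;\rho_2\,pM^{p-1}\int_0^S\!\!\int_{I_\delta}w^{+}(x,t)\,dx\,dt.
\]
Since $\rho\ge\rho_1$ on $I_\delta$, the function $Y(t):=\int_{I_\delta}w^{+}(x,t)\,dx\in L^{\infty}(0,\tau)$ then satisfies $Y(S)\le c_{*}\int_0^S Y$ for a.e.\ $S$, with $c_{*}:=\rho_2\,pM^{p-1}/\rho_1$ and $Y(0)=0$; Gronwall's inequality forces $Y\equiv0$, i.e.\ $u\le v$ a.e.\ in $I_\delta\times(0,\tau)$.

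I expect the genuinely delicate point to be the adjoint construction together with the uniform estimate $\int\!\int a_n\varphi_{n,xx}^2\le C$: the porous-medium diffusion degenerates where $u$ and $v$ vanish, so $a$ is merely bounded and may vanish, which forces both the regularisation $a_n$ and the quantitative control $\int\!\int(a-a_n)^2/a_n\to0$ on it; it is precisely this that renders the error $\int\!\int(a-a_n)\varphi_{n,xx}$ harmless after the weighted Cauchy--Schwarz. The reaction term is comparatively easy: one should resist the temptation to put it into the adjoint operator (which would spoil the clean energy identity for $\varphi_n$), keeping it aside instead and absorbing it through the final Gronwall step, where only $b\ge0$ is used.
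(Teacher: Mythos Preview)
Your argument is the classical Oleinik--Aronson--Crandall--Peletier duality proof, and it is correct. The paper gives no proof of its own here but simply refers to \cite{ACP}; your proposal is precisely the method of that reference (the regularised backward adjoint problem, the weighted energy bound $\int\!\int a_n\varphi_{n,xx}^2\le C$, the weighted Cauchy--Schwarz control of the diffusion error, and the final Gronwall step to absorb the locally Lipschitz reaction), so the two agree.
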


\begin{proposition}
Let hypotheses \eqref{h1} be satisfied. Then there exists a solution $u$ to problem \eqref{problema} with
$$\tau\geq \tau_0:=\frac{1}{(p-1)\|u_0\|_{L^{\infty}((-R,R))}^{p-1}}.$$
Moreover, $u$ is the {\em minimal solution}, in the sense that for any solution $v$ to problem \eqref{problema} there holds
\[u\leq v \quad \textrm{in }\,\,\, \mathbb (-R,R)\times (0, \tau)\,.\]
\label{prop1}
\end{proposition}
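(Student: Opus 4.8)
The plan is to construct the minimal solution to problem \eqref{problema} as the monotone limit, as $\delta\to0$, of the solutions $u_\delta$ to the truncated problems \eqref{eq32}, whose existence on a $\delta$-independent time interval is already guaranteed by Proposition \ref{prop31}. First I would observe that $\|u_0\|_{L^\infty(I_\delta)}\le\|u_0\|_{L^\infty((-R,R))}$, so that $\tau_\delta\ge\tau_0$ uniformly in $\delta$; hence each $u_\delta$ is defined on $I_\delta\times(0,\tau_0)$. The key monotonicity step is to show that $\delta\mapsto u_\delta$ is nonincreasing in $\delta$, i.e. if $\delta'<\delta$ then $u_{\delta'}\ge u_\delta$ on $I_\delta\times(0,\tau_0)$. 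This follows from the comparison principle of Proposition \ref{prop32} applied on the smaller domain $I_\delta$: the restriction of $u_{\delta'}$ to $I_\delta\times(0,\tau_0)$ is a supersolution of \eqref{eq32} on $I_\delta$ (it solves the equation exactly in the interior, is nonnegative on the lateral boundary $\{-R+\delta,R-\delta\}$, and has the same initial datum), while $u_\delta$ is a solution, so $u_\delta\le u_{\delta'}$ there. Extending each $u_\delta$ by zero to all of $(-R,R)$, we obtain a monotone nonincreasing (as $\delta$ decreases) sequence, bounded below by $0$; therefore the pointwise limit $u:=\lim_{\delta\to0}u_\delta$ exists.

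Next I would establish a uniform $L^\infty$ bound. The constant-in-space function $\bar u(t)=[(p-1)(\tau_0-t)]^{-1/(p-1)}$ (or rather, started from $\|u_0\|_\infty$) is a supersolution of \eqref{eq32} for every $\delta$, since $\bar u_t = \bar u^p \ge \frac1{\rho}(\bar u^m)_{xx}+\bar u^p$ is false unless the second derivative term vanishes — so more carefully, $\bar u$ solving $\bar u'=\bar u^p$ with $\bar u(0)=\|u_0\|_\infty$ satisfies $\bar u_t=\bar u^p=\frac1{\rho}(\bar u^m)_{xx}+\bar u^p$ because $(\bar u^m)_{xx}=0$; hence $\bar u$ is in fact a (spatially constant) solution and dominates $u_0$, so by Proposition \ref{prop32}, $u_\delta\le \bar u$ on $I_\delta\times(0,\tau_0)$, uniformly in $\delta$. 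This gives $u\in L^\infty((-R,R)\times(0,S))$ for every $S<\tau_0$, with $\tau_0=\frac{1}{(p-1)\|u_0\|_{L^\infty((-R,R))}^{p-1}}$ as claimed. Passing to the limit in the weak formulation of \eqref{eq32} against test functions $\varphi\in C_c^\infty(\overline{(-R,R)}\times[0,\tau_0))$ vanishing at $x=\pm R$ — using dominated convergence, which is legitimate thanks to the uniform bound and the fact that for $\delta$ small enough $\operatorname{supp}\varphi(\cdot,t)\subset I_\delta$ — shows that $u$ is a very weak solution of \eqref{problema} in the sense of Definition \ref{def1}.

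Finally, for minimality I would let $v$ be any very weak solution of \eqref{problema} on $(-R,R)\times(0,\tau)$ and compare it with $u_\delta$ on each $I_\delta$: the restriction of $v$ to $I_\delta\times(0,\tau)$ is a supersolution of \eqref{eq32} (it satisfies the equation, its boundary trace on $\{-R+\delta,R-\delta\}$ is nonnegative since $v\ge0$, and its initial datum equals $u_0$), hence Proposition \ref{prop32} yields $u_\delta\le v$ on $I_\delta\times(0,\min\{\tau,\tau_0\})$. Letting $\delta\to0$ gives $u\le v$ on $(-R,R)\times(0,\min\{\tau,\tau_0\})$, which is the asserted minimality.

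The main obstacle I anticipate is the rigorous justification that the restriction of a very weak solution of the original problem (or of $u_{\delta'}$) to a smaller subdomain $I_\delta$ is an admissible supersolution of \eqref{eq32} in the sense required by Proposition \ref{prop32} — in particular handling the boundary trace at $x=\pm(R-\delta)$, where the density $\rho$ is finite and smooth so this should reduce to a standard interior-regularity plus trace argument, but it needs the comparison principle of \cite{ACP} to be stated for the appropriate class of sub/supersolutions with the relevant boundary behavior. A secondary technical point is ensuring the dominated-convergence passage to the limit in the nonlinear terms $u_\delta^m$ and $u_\delta^p$; this is routine given the uniform $L^\infty$ bound and pointwise convergence, but should be stated explicitly.
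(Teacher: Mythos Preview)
Your approach is correct and is essentially the one the paper has in mind: the paper does not write out a proof but defers to \cite[Proposition 3.5]{MP1}, and the surrounding arguments (see the proofs of Propositions \ref{cpsup} and \ref{cpsub}) confirm that the construction is exactly the monotone limit $u=\lim_{\delta\to0}u_\delta$ of the solutions to the truncated problems \eqref{eq32}, with minimality obtained by comparing any solution $v$ with each $u_\delta$ on $I_\delta$ and passing to the limit.

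One small slip to fix: after showing $u_{\delta'}\ge u_\delta$ for $\delta'<\delta$, you write that the extended family is ``monotone nonincreasing (as $\delta$ decreases) \dots\ bounded below by $0$''. In fact it is \emph{nondecreasing} as $\delta\downarrow0$, and the relevant bound for the limit to exist is the \emph{upper} bound by the ODE supersolution $\bar u$ that you establish in the next paragraph. This is purely a wording issue and does not affect the argument.
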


The proof of the latter statement is the same of \cite[Proposition 3.5]{MP1}. Concerning uniqueness of solution to problem \eqref{problema}, we state the following Propositions.

\begin{proposition}\label{propuniq}
Let assumption \eqref{h2} be satisfied with $$q\ge 2.$$ Then there exists at most one bounded solution $u$ to problem \eqref{problema}.
\end{proposition}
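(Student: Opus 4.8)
The plan is to prove uniqueness by a duality (adjoint equation) argument of Oleĭnik--Kalashnikov / Brezis--Crandall type, in which the singularity of $\rho$ at $x=\pm R$ forces the lateral boundary to be impenetrable. Let $u_1,u_2$ be bounded solutions of \eqref{problema} with the same datum $u_0$, fix $\tau>0$ and set $w:=u_1-u_2$. Subtracting the two weak identities of Definition \ref{def1} (the $u_0$-terms cancel) and writing $u_1^m-u_2^m=a(x,t)\,w$, $u_1^p-u_2^p=b(x,t)\,w$, with
$$0\le a(x,t)\le M:=m\bigl(\|u_1\|_{\infty}+\|u_2\|_{\infty}\bigr)^{m-1},\qquad |b(x,t)|\le L<\infty,$$
one obtains
$$\int_0^{\tau}\!\!\int_{(-R,R)}\rho(x)\,w\left(\varphi_t+\frac{a}{\rho}\,\varphi_{xx}+b\,\varphi\right)dx\,dt=0$$
for every admissible test function $\varphi$. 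It therefore suffices to show that for each $\psi\in C_c^{\infty}((-R,R)\times(0,\tau))$ with $\psi\ge0$ there is an admissible $\varphi\ge0$ (so $\varphi(\pm R,\cdot)=0$, $\operatorname{supp}\varphi\subset(-R,R)\times[0,\tau)$) solving, in a suitable weak sense, the backward adjoint equation $\varphi_t+\tfrac{a}{\rho}\varphi_{xx}+b\varphi=\psi$ with $\varphi(\cdot,\tau)\equiv0$; then $\iint\rho\,w\,\psi=0$, and the arbitrariness of $\psi$ gives $w=0$ a.e., i.e. $u_1=u_2$.

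To produce $\varphi$ I would argue by a double approximation: mollify $a,b$ and replace $a$ by $a_{\varepsilon}\ge\varepsilon$ and $\rho$ by a smooth $\rho_{\varepsilon}$ still comparable to $(R-|x|)^{-q}$ near $\pm R$; on $I_{\delta}=(-R+\delta,R-\delta)$ solve the resulting uniformly parabolic linear backward problem with homogeneous lateral and final data, obtaining $\varphi_{\varepsilon,\delta}\ge0$, smooth, with $0\le\varphi_{\varepsilon,\delta}\le e^{L\tau}\tau\|\psi\|_{\infty}$ (weak maximum principle, uniform in $\varepsilon,\delta$). Interior parabolic estimates give compactness away from $\{|x|=R\}$, so $\varphi_{\varepsilon,\delta}\to\varphi$ along subsequences. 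Using a mollification of $\varphi_{\varepsilon,\delta}$ (which, for fixed $\delta$, is supported away from $\pm R$ and hence admissible) in the identity for $w$, and letting first $\varepsilon\to0$ and then $\delta\to0$, one recovers $\iint\rho w\psi=0$ provided the $t$-integrals of the boundary terms $\left[(u_1^m-u_2^m)\,\partial_x\varphi_{\varepsilon,\delta}\right]_{x=\pm(R-\delta)}$ tend to $0$. This is the crux, and it is exactly here that the hypothesis $q\ge2$ enters.

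The reason is that the coefficient $a_{\varepsilon}/\rho_{\varepsilon}$ of $\varphi_{xx}$ degenerates at $|x|=R$ of order $q$; in Feller's boundary classification for the operator $\tfrac1\rho\partial_{xx}$ the endpoints $\pm R$ are natural (inaccessible to the associated diffusion) precisely when $q\ge2$, so no lateral data is felt in the limit and the flux through $\pm R$ must vanish. Concretely, the required uniform control of $\varphi_{\varepsilon,\delta}$ near the boundary is obtained by comparing it with an explicit barrier adapted to the profile $\mathfrak{s}$ of \eqref{eq210} (with $b=q-2$) when $q>2$, and to the profile $\mathfrak{t}$ when $q=2$ --- these being exactly the functions for which the associated ``flux'' stays integrable up to $\pm R$, and such a barrier exists if and only if $q\ge2$. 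Once the boundary terms are shown to vanish, the identity $\iint\rho\,w\,\psi=0$ follows and $u_1=u_2$. (Alternatively, one may use Proposition \ref{prop1}: any bounded $v$ is $\ge$ the minimal solution, and, comparing $v$ on $I_{\delta}$ with the solution having large constant lateral data via Proposition \ref{prop32} and letting $\delta\to0$, the same barrier shows the boundary layer disappears when $q\ge2$, forcing $v$ to coincide with the minimal solution.) The interior analysis and the passages to the limit are routine; the delicate point is the construction of the boundary barrier and the uniform-in-$(\varepsilon,\delta)$ estimates near $|x|=R$.
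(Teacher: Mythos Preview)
The paper does not actually prove this proposition: it only asserts that it ``can be proved by standard methods'' and refers to the author's thesis \cite{Tesi}. Your duality (adjoint-equation) argument is indeed one of the classical standard methods, and you have correctly isolated the heart of the matter: the endpoints $\pm R$ are non-exit in Feller's sense precisely when $q\ge 2$, because the diffusion coefficient $a/\rho\sim(R-|x|)^{q}$ degenerates fast enough that no lateral flux survives the limit $\delta\to0$. The alternative route you sketch in your final parenthesis---showing via Proposition~\ref{prop32} that on each $I_{\delta}$ any bounded solution is squeezed between the approximants with lateral data $0$ and $M=\|u\|_{\infty}$, and then using a barrier to prove that these two approximants have the same limit when $q\ge2$---is the method actually used in the references the paper cites for well-posedness (Pozio--Punzo--Tesei \cite{PoPT,PoPT2}, Punzo \cite{P1}). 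The barrier there is exactly the observation dual to Proposition~\ref{nonuniq}: a positive supersolution $V$ of $\rho^{-1}V_{xx}=-1$ with $V(\pm R)=0$ exists if and only if $q<2$, so for $q\ge2$ the boundary layer carrying the lateral datum has no room to live. Both routes are valid; the barrier/minimal-solution argument is the more elementary one and dovetails directly with the companion non-uniqueness statement, while the Ole\u{\i}nik--Brezis--Crandall duality you outline is heavier but more widely applicable. Either fits what the paper means by ``standard methods''.
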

\begin{proposition}\label{nonuniq}
Let hypothesis \eqref{h2} be satisfied with $$q<2.$$ If there exists a supersolution $V>0$ of problem 
\begin{equation}
\begin{aligned}
\dfrac{1}{\rho} V_{xx}&=-1 \\
\lim_{|x|\to R}V(x)&=0\,,
\end{aligned}
\end{equation}
then there exist infinitely many solutions $u$ of problem \eqref{problema} that belong to $L^{\infty}((-R,R)\times(0,T])$, for some $T>0$. In particular, for any $c>0$, there exists a solution $u_c$ of problem \eqref{problema} such that
$$
\lim_{|x|\to R}\frac{1}{T}\int_{0}^{T}u_c^{m}(x,t)\,dt=c
$$
\end{proposition}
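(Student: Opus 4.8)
The plan is to produce, for every $c>0$, a bounded very weak solution $u_c$ of \eqref{problema} on a short time slab $I\times(0,T]$ whose boundary average equals $c$, and then to note that two such solutions with different values of $c$ are necessarily distinct, and all of them differ from the minimal solution of Proposition~\ref{prop1}, which has zero boundary average; this already gives infinitely many solutions. I would obtain $u_c$ as a limit, as $\delta\to0^{+}$, of the solutions $u_{c,\delta}$ of the initial--boundary value problem associated with \eqref{eq32} on $I_\delta=(-R+\delta,R-\delta)$ but with the \emph{nonhomogeneous} lateral datum $u\equiv c^{1/m}$ on $\{-R+\delta,R-\delta\}\times(0,\tau)$ and initial datum $u_0$. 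Existence of $u_{c,\delta}$ and the comparison principle extend with no change from Propositions~\ref{prop31}--\ref{prop32} to this mildly nonhomogeneous setting (the jump between $u_0$ and $c^{1/m}$ at the corners of the parabolic boundary being harmless for very weak solutions), and by comparison the family $\{u_{c,\delta}\}_\delta$ can be arranged to be monotone in $\delta$, so that the pointwise limit $u_c$ exists.

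The hypothesis --- the existence of a positive supersolution $V$ of $\tfrac1\rho V_{xx}=-1$ with $V\to0$ on $\partial I$ --- enters precisely to bound $u_{c,\delta}$ \emph{uniformly in $\delta$} on $I\times[0,T]$; this is the only step where $q<2$ is really used, since only then is such a bounded $V$ available. I would take
\[
\overline u(x,t):=\bigl(A+\lambda t+V(x)\bigr)^{1/m},\qquad A:=\max\{\|u_0\|_{\infty}^{m},\,c,\,1\},
\]
with $\lambda>0$ and $T>0$ chosen so that $\lambda\ge m\,(A+\lambda T+\|V\|_{\infty})^{(p+m-1)/m}$ --- possible because the exponent $(p+m-1)/m$ exceeds $1$: fix $\lambda$ large, then $T$ small. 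Writing $\sigma:=\overline u^{\,m}=A+\lambda t+V\in[A,\,A+\lambda T+\|V\|_\infty]$ and using $V_{xx}\le-\rho$, one gets $\rho\,\overline u_t-(\overline u^{\,m})_{xx}-\rho\,\overline u^{\,p}\ge\rho\bigl[\tfrac\lambda m\sigma^{(1-m)/m}+1-\sigma^{p/m}\bigr]\ge0$ (the last inequality holding since $\sigma\ge A\ge1$ and $\lambda\ge m\sigma^{(p+m-1)/m}$), so $\overline u$ is a supersolution of \eqref{problema} on $I\times[0,T]$; moreover $\overline u(\cdot,0)\ge A^{1/m}\ge u_0$ and $\overline u^{\,m}\ge A\ge c$ on $\partial I_\delta$. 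Comparison then yields $u_{c,\delta}\le\overline u$ on $I_\delta\times[0,T]$ uniformly in $\delta$, hence $u_c$ is bounded on $I\times[0,T]$. Passing to the limit in the very weak formulation of \eqref{eq32} --- the uniform $L^\infty$ bound plus standard parabolic compactness for the $u^{m}$--terms, monotone convergence for the remaining terms --- shows that $u_c$ is a very weak solution of \eqref{problema} on $I\times(0,T]$ in the sense of Definition~\ref{def1} with $u_c(\cdot,0)=u_0$; the lateral data disappear in the limit because admissible test functions vanish at $\pm R$.

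The last task is to identify the boundary average, and this is where I would sandwich $u_{c,\delta}$ between two explicit barriers. From below I would use a boundary--source solution $\underline w$ of the unforced equation $\rho u_t=(u^m)_{xx}$ near $\partial I$, with the value $c^{1/m}$ pinned on $\partial I_\delta$ and zero initial value: it is automatically a subsolution of the full problem \eqref{problema} (the reaction term only helps), it satisfies $\underline w\le u_0$ at $t=0$ and $\underline w\le c^{1/m}$ on $\partial I_\delta$, and one checks that $\underline w^{\,m}\to c$ as $|x|\to R$ for every $t>0$. From above I would use the analogous boundary--source barrier taken from the value $\max\{\|u_0\|_\infty,c^{1/m}\}$ in the interior down to $c^{1/m}$ on $\partial I_\delta$, corrected by a term proportional to $V(x)$ (which vanishes at $\partial I$, hence does not spoil the boundary value $c$, but absorbs the reaction term on $[0,T]$), so that $\overline w$ is a supersolution with $\overline w\ge u_0$ at $t=0$, $\overline w\ge c^{1/m}$ on $\partial I_\delta$, and $\overline w^{\,m}\to c$ as $|x|\to R$; both barriers may be taken of a self--similar type close to \eqref{e4f}, built on a profile comparable to $(R-|x|)^{2-q}$ near $\partial I$, and any interior free boundary becomes irrelevant once $\delta$ is small. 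Letting $\delta\to0$ and invoking dominated convergence --- legitimate thanks to the uniform $L^\infty$ bound of the previous step --- gives
\[
\lim_{|x|\to R}\frac1T\int_0^{T}u_c^{\,m}(x,t)\,dt=c .
\]
Letting $c$ range over $(0,+\infty)$ produces infinitely many pairwise distinct solutions, all distinct from the minimal one, each living at least on $I\times(0,T]$ with $T$ small enough (depending on $c$) that the reaction term $\rho u^p$ has not yet driven blow-up. The genuine obstacle is exactly this last construction: the sub/supersolution pair with the correct $1/(m-1)$--power profile near $\partial I$ must simultaneously be compatible with a general bounded $u_0$ at $t=0$, absorb the reaction term on $[0,T]$, and pin the boundary average to precisely $c$ rather than merely bound it; everything else is a routine approximation-and-comparison argument.
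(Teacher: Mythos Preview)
The paper does not actually prove this proposition: it says the result ``can be proved by standard methods'' and defers to \cite[Propositions~1.7.1, 2.5.1]{Tesi} for the details. Your outline \emph{is} the standard scheme --- approximate by solutions of \eqref{eq32} on $I_\delta$ with nonhomogeneous lateral datum $c^{1/m}$, manufacture a $\delta$-uniform bound from a supersolution of the form $(A+\lambda t+V)^{1/m}$ (this is precisely where the hypothesised $V$ enters), pass to the limit, and pin the boundary trace by a barrier sandwich --- so there is nothing substantive in the paper to compare against, and your plan coincides with what ``standard methods'' is meant to convey.

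Two minor comments on the sketch itself. The monotonicity in $\delta$ you invoke is not automatic once $u_0$ and $c^{1/m}$ are unrelated at the corners; the cleaner route is simply to use the uniform $L^\infty$ bound together with interior parabolic compactness for $u_{c,\delta}^{m}$ and extract a subsequential limit. For the boundary trace from above, the customary barrier is $(c+Mt+KV(x))^{1/m}$ restricted to a thin collar $\{R-|x|<\epsilon\}$, with $M,K$ chosen to absorb the reaction and to dominate $u_c$ on the inner edge of the collar; since $V\to 0$ at $\partial I$, its $m$-th power tends to $c$ there and closes the sandwich. Both adjustments sit squarely inside the framework you describe.
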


The proofs of Propositions \ref{propuniq}, \ref{nonuniq} can be proved by standard methods (for a complete proof see for instance \cite[Propositions 1.7.1, 2.5.1]{Tesi}).

In conclusion, we can state the following two comparison results for the solution to problem \eqref{problema}, which will be used in the sequel.

\begin{proposition}\label{cpsup}
Let hypothesis \eqref{h1} be satisfied. Let $\bar{u}$ be a supersolution to problem \eqref{problema}. Then, if $u$ is the minimal solution to problem \eqref{problema} given by Proposition \ref{prop1}, then
\begin{equation}\label{eq34}
u\le\bar{u} \quad \text{a.e. in } (-R,R) \times (0,\tau)\,.
\end{equation}
In particular, if $\bar{u}$ exists until time $\tau$, then also $u$ exists at least until time $\tau$.
\end{proposition}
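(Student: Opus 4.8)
The statement asserts a one-sided comparison between the minimal solution $u$ of \eqref{problema} (produced in Proposition \ref{prop1}) and an arbitrary supersolution $\bar u$. The plan is to reduce the claim to the comparison principle on the truncated domains $I_\delta$, where Proposition \ref{prop32} is already available, and then pass to the limit $\delta\to 0$. First I would recall how the minimal solution $u$ is constructed: it is obtained (as in \cite[Proposition 3.5]{MP1}) as the monotone limit, as $\delta\to 0$, of the solutions $u_\delta$ to the auxiliary Dirichlet problem \eqref{eq32} on $I_\delta$ with initial datum $u_0$ (suitably restricted/extended), the limit being monotone because shrinking $\delta$ enlarges the domain and the zero boundary data act as a subsolution from outside. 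So it suffices to show $u_\delta \le \bar u$ a.e. in $I_\delta\times(0,\tau)$ for every $\delta$, and then let $\delta\to 0$.

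Second, I would verify that the given supersolution $\bar u$ of \eqref{problema}, restricted to $I_\delta\times(0,\tau)$, is a supersolution of the truncated problem \eqref{eq32}. The differential inequality is inherited immediately since \eqref{eq32} is just \eqref{problema} divided by $\rho$ on a subdomain; the only point to check is the lateral boundary condition, and there one uses $\bar u\ge 0$ (which holds because a supersolution of \eqref{problema} with $u_0\ge 0$ is nonnegative, by comparison with the trivial subsolution $0$), so $\bar u \ge 0 = u_\delta$ on $\{-R+\delta,R-\delta\}\times(0,\tau)$; at $t=0$ one has $\bar u(\cdot,0)\ge u_0 \ge u_\delta(\cdot,0)$. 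Hence $u_\delta$ is a subsolution and $\bar u$ a supersolution of \eqref{eq32}, and Proposition \ref{prop32} gives $u_\delta \le \bar u$ a.e. in $I_\delta\times(0,\tau)$.

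Third, I would pass to the limit: since $u_\delta \uparrow u$ a.e. as $\delta\to 0$ and $u_\delta \le \bar u$ for each $\delta$, monotone convergence yields $u\le \bar u$ a.e. in $(-R,R)\times(0,\tau)$, which is \eqref{eq34}. For the final sentence of the statement, note that the very same monotone-limit construction shows that $u$ is defined (and bounded on compact time intervals) on whatever time interval the uniform bound $u_\delta \le \bar u$ persists; since $\bar u$ exists up to time $\tau$, the family $\{u_\delta\}$ is locally uniformly bounded on $(-R,R)\times(0,\tau)$, so the limit $u$ does not blow up before $\tau$ and therefore exists at least until $\tau$.

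The only genuinely delicate point is the first step — making precise that the minimal solution really is the increasing $\delta\to 0$ limit of the $u_\delta$, and that this limit is a very weak solution in the sense of Definition \ref{def1}; but this is exactly the content of \cite[Proposition 3.5]{MP1} (the same argument, referenced in the proof of Proposition \ref{prop1}), so I would simply invoke it. Everything else is a routine application of the truncated comparison principle followed by a monotone passage to the limit, and no regularity beyond $L^\infty$ and the weak formulation of Definition \ref{def1} is needed.
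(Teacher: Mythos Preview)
Your proposal is correct and follows essentially the same route as the paper: show that $\bar u$ is a supersolution of the truncated problem \eqref{eq32} on each $I_\delta$, apply the comparison principle (Proposition \ref{prop32}) to obtain $u_\delta\le \bar u$, and then let $\delta\to 0$ using the construction of the minimal solution as the monotone limit of the $u_\delta$. Your write-up is in fact more explicit than the paper's (you spell out the boundary and initial checks and the monotone passage to the limit), but the strategy is identical.
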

\begin{proof}
For any $\delta>0$, $\overline{u}$ is a supersolution to problem \eqref{eq31}. Hence, by Proposition \ref{prop31},
$$
u_{\delta}\le\overline{u}\quad \text{in}\,\,\,I_{\delta}\times(0,\tau).
$$
By passing to the limit as $\delta\to0$, we obtain \eqref{eq34} which ensures that $u$ does exist at least up to time $\tau$, by definition of maximal existence time.
\end{proof}

\begin{proposition}\label{cpsub}
Let hypothesis \eqref{h1} be satisfied. Let $u$ be a solution to problem \eqref{problema} for some time $\tau=\tau_1>0$ and $\underline{u}$ a subsolution to problem \eqref{problema} for some time $\tau=\tau_2>0$. Suppose also that
$$
\operatorname{supp }\underline{u}|_{\overline{D}_S} \text{ is compact for every }  \, S\in (0, \tau_2)\,.
$$
Then
\begin{equation}\label{eq35}
u\ge\underline{u} \quad \text{ in }\,\, (-R,R) \times \left(0,\min\{\tau_1,\tau_2\}\right)\,.
\end{equation}
\end{proposition}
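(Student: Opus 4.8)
The plan is to reduce the statement to a comparison on the exhausting intervals $I_\delta$, where the auxiliary Dirichlet problem \eqref{eq32} and its comparison principle (Proposition \ref{prop32}) are available, and then to pass to the limit $\delta\to0$. As a first step, let $u^{*}$ denote the minimal solution of \eqref{problema} furnished by Proposition \ref{prop1}. Since the given solution $u$ is in particular a supersolution of \eqref{problema}, Proposition \ref{cpsup} yields $u^{*}\le u$ on $(-R,R)\times(0,\tau_1)$ and, moreover, $u^{*}$ is defined up to time $\tau_1$. Recalling that $u^{*}=\lim_{\delta\to0^{+}}u_\delta$, where the $u_\delta$ solve \eqref{eq32} and increase as $\delta\downarrow0$ (and hence each $u_\delta$ is defined on $(0,S)$ for every $S<\tau_1$, being bounded above by $u^{*}\le u$ there), it suffices to prove that $\underline u\le u^{*}$ a.e.\ in $(-R,R)\times(0,\min\{\tau_1,\tau_2\})$.

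I would then fix $S\in(0,\min\{\tau_1,\tau_2\})$ and invoke the hypothesis: since $\operatorname{supp}\underline u|_{\overline{D}_S}$ is a compact subset of $(-R,R)\times[0,S]$, there exists $\delta_S>0$ such that $\underline u=0$ a.e.\ in $\{R-\delta_S\le|x|<R\}\times(0,S)$. The key claim is that, for every $\delta\in(0,\delta_S)$, the restriction of $\underline u$ to $I_\delta\times(0,S)$ is a subsolution of \eqref{eq32} with initial datum $u_0|_{I_\delta}$. Indeed, it vanishes in a neighbourhood of $\partial I_\delta\times(0,S)$ (so the Dirichlet condition of \eqref{eq32} holds) and satisfies $\underline u(\cdot,0)\le u_0$; and given any test function $\psi$ admissible for \eqref{eq32} on $I_\delta$, one tests the subsolution inequality for $\underline u$ (as a subsolution of \eqref{problema}) against $\chi\psi$ extended by $0$ outside $I_\delta$, where $\chi$ is a smooth cut-off with $0\le\chi\le1$, $\chi\equiv1$ on $\{|x|\le R-\delta_S\}$ and $\chi\equiv0$ near $\partial I_\delta$. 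Because $\underline u$ is supported where $\chi\equiv1$, all the bulk terms reduce exactly to the corresponding terms tested against $\psi$ on $I_\delta$, while the initial term only changes in the favourable direction (using $0\le\chi\le1$ and nonnegativity), so the desired inequality follows. Proposition \ref{prop32}, applied to the subsolution $\underline u$ and the supersolution $u_\delta$ of \eqref{eq32}, then gives $\underline u\le u_\delta$ a.e.\ in $I_\delta\times(0,S)$.

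Finally, I would let $\delta\to0^{+}$: since $I_\delta\uparrow(-R,R)$ and $u_\delta\uparrow u^{*}$, this yields $\underline u\le u^{*}$ a.e.\ in $(-R,R)\times(0,S)$, and then letting $S\uparrow\min\{\tau_1,\tau_2\}$, together with $u^{*}\le u$, gives \eqref{eq35}.

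The step I expect to be the \emph{main obstacle} is the verification that $\underline u|_{I_\delta}$ is genuinely a subsolution of the auxiliary problem \eqref{eq32} in the very weak sense of Definition \ref{def1}: since these solutions are merely $L^\infty$, some care is needed in restricting the test functions and in checking that no boundary contribution is lost. This is precisely where the compact-support hypothesis is indispensable — it forces $\underline u$ to vanish in a full neighbourhood of $\partial I_\delta$ for $\delta<\delta_S$, which both legitimises the cut-off procedure above (up to a routine density argument within the admissible test-function class) and makes the Dirichlet condition of \eqref{eq32} automatic. By contrast, no such restriction is needed for $u$ itself, which is why the argument is routed through the minimal solution $u^{*}=\lim_\delta u_\delta$ rather than through $u$ directly. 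Everything else — the comparison on $I_\delta$ and the monotone passages to the limit — is standard.
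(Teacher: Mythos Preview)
Your argument is correct and rests on the same core idea as the paper: fix $S<\min\{\tau_1,\tau_2\}$, use the compact-support hypothesis to find $\delta$ so small that $\operatorname{supp}\underline u|_{\overline D_S}\subset I_\delta\times[0,S]$, verify that $\underline u$ is then a subsolution of the auxiliary Dirichlet problem \eqref{eq32} on $I_\delta$, apply the comparison principle (Proposition \ref{prop32}), and let $S\uparrow\min\{\tau_1,\tau_2\}$.

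The only difference is that you take an unnecessary detour through the minimal solution $u^*=\lim_\delta u_\delta$: you compare $\underline u$ with $u_\delta$ and then use $u_\delta\le u^*\le u$. The paper instead observes directly that the given solution $u$, being nonnegative, satisfies $u\ge 0$ on $\partial I_\delta\times(0,S)$ and is therefore itself a supersolution of \eqref{eq32}; one then compares $\underline u$ with $u$ on $I_\delta$ in a single step. Your remark that ``no such restriction is needed for $u$ itself, which is why the argument is routed through the minimal solution'' is thus overly cautious: nonnegativity of $u$ already gives the required boundary inequality, so the route through $u^*$ buys nothing here. Your more careful discussion of why $\underline u|_{I_\delta}$ is a genuine very weak subsolution (the cut-off argument) is a useful elaboration that the paper leaves implicit.
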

\begin{proof}
For a fixed $S <\min\{\tau_1,\tau_2\}$, we can take $\delta>0$ sufficiently small that
$$\text{supp}\underline{u}|_{D_S}\subset I_{\delta}\times[0,S].$$
Then $u$ and $\underline{u}$ are a super- and a subsolution, respectively, to problem \eqref{eq31}. Hence
$$ u\ge\underline{u}\quad\text{in}\,\,\,I_{\delta}\times(0,S).$$
By passing to the limit as $\delta\to0$, due to the arbitrariness of $S$, we obtain \eqref{eq35}.
\end{proof}

In what follows we also consider solutions of equations of the form
\begin{equation}\label{eq36}
u_t = \frac 1{\rho(x)}(u^m)_{xx} + u^p \quad \textrm{in }\,\, I\times (0, \tau),
\end{equation}
where $I\subseteq\mathbb (-R,R)$ is an open interval. Solutions are meant in the sense of Definition \ref{def1} where the nonnegative test functions $\varphi\in C_c^{\infty}(\overline{I}\times[0,\tau))$ are such that $\varphi|_{\partial I}=0$ for any $t\in[0,\tau)$.

We now recall the following well-known criterion, that will be used in the sequel.
Let $I\subseteq \mathbb R$ be an open interval. Suppose that $I=I_1\cup I_2$ with  $I_1\cap I_2=\emptyset$, and that   $\Sigma:=\partial I_1\cap\partial I_2$ is of class $C^1$. Let $n$ be the unit outwards normal to $I_1$ at $\Sigma$.
Let
\begin{equation}\label{eq38}
u=\begin{cases}
u_1 & \textrm{in }\, I_1\times [0, \tau),\\
u_2 & \textrm{in }\, I_2\times [0, \tau)\,,
\end{cases}
\end{equation}
where $\partial_t u\in C(I_1\times (0, \tau)), u_1^m\in C^2(I_1\times (0, \tau))\cap C^1(\overline{I}_1\times (0, \tau)) , \partial_t u_2\in C(I_2\times (0, \tau)),u_2^m\in C^2(I_2\times (0, \tau))\cap C^1(\overline{I}_2\times (0, \tau)).$

\begin{lemma}\label{lemext}
Let assumption \eqref{h1} be satisfied.

(i) Suppose that
\begin{equation}\label{eq39}
\begin{aligned}
&\partial_t u_1 \geq \frac 1{\rho} (u_1^m)_{xx} +u_1^p \quad \textrm{for any}\,\,\, (x,t)\in I_1\times (0, \tau),\\
&\partial_t u_2 \geq  \frac 1{\rho} (u_2^m)_{xx} + u_2^p \quad \textrm{for any}\,\,\, (x,t)\in I_2\times (0, \tau),
\end{aligned}
\end{equation}
\begin{equation}\label{eq310}
u_1=u_2, \quad \frac{\partial u_1^m}{\partial n}\geq \frac{\partial u_2^m}{\partial n}\quad \textrm{for any }\,\, (x,t)\in \Sigma\times (0, \tau)\,.
\end{equation}
Then $u$, defined in \eqref{eq38}, is a supersolution to equation \eqref{eq36}, in the sense of Definition \ref{def1} with $I_R$ replaced by $I$.

(ii)  Suppose that
\begin{equation*}\label{eq185b}
\begin{aligned}
&\partial_t u_1 \leq  \frac 1{\rho} (u_1^m)_{xx} +u_1^p \quad \textrm{for any}\,\,\, (x,t)\in I_1\times (0, \tau),\\
&\partial_t u_2 \leq  \frac 1{\rho} (u_2^m)_{xx} + u_2^p \quad \textrm{for any}\,\,\, (x,t)\in I_2\times (0, \tau),
\end{aligned}
\end{equation*}
\begin{equation*}\label{eq311}
u_1=u_2, \quad (u_1^m)_x \leq (u_2^m)_x \quad \textrm{for any}\,\, (x,t)\in \Sigma\times (0, \tau)\,.
\end{equation*}
Then $u$, defined in \eqref{eq38}, is a subsolution to equation \eqref{eq36}, in the sense of Definition \ref{def1} with $I_R$ replaced by $I$.
\end{lemma}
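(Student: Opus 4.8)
The plan is to prove part (i) by testing against an arbitrary nonnegative $\varphi \in C_c^\infty(\overline I \times [0,\tau))$ with $\varphi|_{\partial I} = 0$, and then split the space integral over $I_1$ and $I_2$. First I would multiply each differential inequality in \eqref{eq39} by $\rho(x)\varphi$ and integrate over $I_j \times (0,\tau)$; the reaction term $\int\!\!\int \rho u_j^p \varphi$ and the time term appear immediately, and integrating the time term by parts (using $\varphi(\cdot,\tau)=0$ and the regularity $\partial_t u_j \in C(I_j\times(0,\tau))$) produces $-\int\!\!\int \rho u_j \varphi_t - \int_{I_j}\rho u_0 \varphi(\cdot,0)$ on $I_j$. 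So the only nontrivial point is the diffusion term: I must show that
\[
\int_0^\tau\!\!\int_{I_1}(u_1^m)_{xx}\varphi\,dx\,dt + \int_0^\tau\!\!\int_{I_2}(u_2^m)_{xx}\varphi\,dx\,dt \;\le\; \int_0^\tau\!\!\int_{I}u^m\varphi_{xx}\,dx\,dt .
\]
Here the sign $\le$ (rather than $=$) is exactly what makes $u$ a supersolution.

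The key step is a double integration by parts on each subinterval. On $I_j$, since $u_j^m \in C^2(I_j\times(0,\tau))\cap C^1(\overline I_j\times(0,\tau))$, I integrate by parts twice in $x$:
\[
\int_{I_j}(u_j^m)_{xx}\varphi\,dx = \Big[(u_j^m)_x\varphi\Big]_{\partial I_j} - \Big[u_j^m\varphi_x\Big]_{\partial I_j} + \int_{I_j}u_j^m\varphi_{xx}\,dx .
\]
On the outer boundary $\partial I \cap \partial I_j$ the boundary terms vanish because $\varphi$ and hence $\varphi_x$ along that piece — more precisely $\varphi=0$ there, and the $(u_j^m)_x\varphi$ term vanishes; for the $u_j^m\varphi_x$ term one uses that $\varphi\equiv 0$ on $\partial I$ forces the contribution to cancel, or more simply one notes $\varphi$ has compact support in $\overline I$ with $\varphi|_{\partial I}=0$. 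On the interface $\Sigma$, the two contributions from $I_1$ and $I_2$ combine: the $u_j^m\varphi_x$ terms cancel because $u_1^m = u_2^m$ on $\Sigma$ (continuity across the interface) and $\varphi$, $\varphi_x$ are single-valued, while the normal-derivative terms add up to $-\big(\tfrac{\partial u_1^m}{\partial n} - \tfrac{\partial u_2^m}{\partial n}\big)\varphi \le 0$ by \eqref{eq310} together with $\varphi\ge 0$. Summing over $j$ then yields precisely the inequality above. Combining with the already-identified time and reaction terms and reversing the sign conventions of Definition \ref{def1} gives that $u$ is a supersolution.

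Part (ii) is entirely analogous: one reverses all the differential inequalities, and on the interface the jump condition $(u_1^m)_x \le (u_2^m)_x$ with $n$ the outward normal to $I_1$ means $\tfrac{\partial u_1^m}{\partial n} = (u_1^m)_x$ (up to orientation) is dominated appropriately, so the interface contribution now has the sign needed for a subsolution. I expect the main (and only real) obstacle to be bookkeeping the signs and orientations at $\Sigma$: one must be careful that $n$ is the outward normal to $I_1$, so that the boundary term from $I_1$ is $+(u_1^m)_x\varphi$ evaluated with the $+n$ orientation while the term from $I_2$ carries the opposite orientation $-n$, and the hypothesis on the normal derivatives is stated so that their sum has the correct sign once multiplied by $\varphi \ge 0$. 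Everything else — the two integrations by parts, the vanishing of outer boundary terms, the time integration by parts — is routine.
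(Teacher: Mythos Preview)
The paper does not give its own proof of this lemma; it simply refers to \cite[Lemma~3.1]{MP1} for the $N$-dimensional version and says the argument is the same for $N=1$. Your approach---multiply each classical inequality by $\rho\varphi$, integrate, perform two spatial integrations by parts on each subinterval, and check that the interface contributions carry the right sign---is exactly the standard argument and is what that reference does.

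Two small corrections to your bookkeeping (which you yourself flagged as the main risk). First, the inequality you need for the diffusion term runs the other way: for a \emph{supersolution} one must show
\[
\sum_j \int_0^\tau\!\!\int_{I_j}(u_j^m)_{xx}\varphi \;\ge\; \int_0^\tau\!\!\int_I u^m\varphi_{xx},
\]
since after summing and integrating in time one has $-\int\!\!\int\rho u\varphi_t - \int\rho u(\cdot,0)\varphi(\cdot,0) \ge \sum_j\int\!\!\int(u_j^m)_{xx}\varphi + \int\!\!\int\rho u^p\varphi$, and to reach the weak supersolution inequality of Definition~\ref{def1} one must drop a \emph{nonnegative} boundary remainder. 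Second, and consistently, the interface contribution (with $n$ the outward normal to $I_1$) is $+\big(\tfrac{\partial u_1^m}{\partial n}-\tfrac{\partial u_2^m}{\partial n}\big)\varphi \ge 0$, not with the minus sign you wrote. Your two sign slips cancel, so the conclusion is unaffected. Finally, the claim that the outer boundary term $u_j^m\varphi_x$ vanishes on $\partial I$ merely from $\varphi|_{\partial I}=0$ is not quite right; in every application in the paper one of the $u_j$ vanishes identically near $\partial I$ (or one uses test functions compactly supported in $I$), so this causes no difficulty.
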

The proof of Lemma \ref{lemext} is the same as the proof of \cite[Lemma 3.1]{MP1} for the special case $N=1$.

\section{Proofs of Theorems \ref{teo1}, \ref{teo2} and \ref{teo6}} \label{Global}

Le us consider the function

\begin{equation}\label{eq42b}
w(x,t):=
\begin{cases} 
u(x,t)\quad& (x,t)\in (-R,-R+\varepsilon)\times(0,T), \\ 
v(x,t) \quad & (x,t)\in [-R+\varepsilon,R-\varepsilon]\times(0,T),\\ 
u(x,t)\quad& (x,t)\in (R-\varepsilon,R)\times(0,T) ;
\end{cases}
\end{equation}
\begin{equation}\label{eq42}
u(x,t):=C\zeta(t)\left[F(x,t)\right]_{+}^{-\frac{1}{m-1}}; 
\end{equation}
\begin{equation}\label{eq42a}
v(x,t):=C\zeta(t)\left[G(x,t)\right]_{+}^{-\frac{1}{m-1}}; 
\end{equation}
where 
\begin{equation}\label{eq42c}
\begin{aligned}
&F(x,t):= 1-\,\frac{(R-|x|)^{-b}}{a}\eta(t)\,;\\ 
&G(x,t):=1-\left[\frac{2\varepsilon-b(R-\varepsilon)}{2\varepsilon^{b+1}}+\frac{b|x|^2}{2\varepsilon^{b+1}(R-\varepsilon)}\right]\, \frac{\eta(t)}{a} ;
\end{aligned}
\end{equation}
for suitable positive $\zeta$ and $\eta$ and for $C > 0$, $a>0$, $b$ as in \eqref{eq25}. Observe that, for any $(x,t) \in [(-R,-R+\varepsilon)\cup (R-\varepsilon,R)]\times(0,T)$, we have

\begin{equation}\label{eq43}
\begin{aligned}
u_t &=C\,\zeta'\,F^{\frac{1}{m-1}}-\frac{1}{m-1}C\zeta\,F^{\frac{1}{m-1}-1}\frac{(R-|x|)^{-b}}{a}\eta'\\
&=C\,\zeta'\,F^{\frac{1}{m-1}}+\frac{1}{m-1}C\zeta\,F^{\frac{1}{m-1}}\frac{\eta'}{\eta}-\frac{1}{m-1}C\zeta\,F^{\frac{1}{m-1}-1}\frac{\eta'}{\eta}.
\end{aligned}
\end{equation}
\begin{equation}\label{eq44}
\begin{aligned}
(u^m)_{xx}
&=-C^m \,\zeta^m\frac{m}{(m-1)} \,\frac{b}{a}\,(R-|x|)^{-b-2}\,\eta\left(\frac{bm}{m-1}+1\right)\,F^{\frac{1}{m-1}}\\
&+C^m \,\zeta^m\frac{m}{(m-1)^2} \,\frac{b^2}{a}\,(R-|x|)^{-b-2}\,\eta\,F^{\frac{1}{m-1}-1}.\\
\end{aligned}
\end{equation}
On the other hand, for any $(x,t) \in [-R+\varepsilon,R-\varepsilon]\times(0,T)$ 
\begin{equation}\label{eq45}
\begin{aligned}
v_t &=C\,\zeta'\,G^{\frac{1}{m-1}}+\frac{1}{m-1}C\zeta\,G^{\frac{1}{m-1}}\frac{\eta'}{\eta}-\frac{1}{m-1}C\zeta\,G^{\frac{1}{m-1}-1}\frac{\eta'}{\eta}.
\end{aligned}
\end{equation}
\begin{equation}\label{eq46}
\begin{aligned}
(v^m)_{xx}
&=-\frac{C^m}{a} \,\zeta^m\frac{m}{(m-1)} \,\varepsilon^{-b-1}\frac{b}{R-\varepsilon}\,G^{\frac{1}{m-1}}\\
&+C^m \,\zeta^m\frac{m}{(m-1)^2} \,\varepsilon^{-2b-2}\frac{b^2}{(R-\varepsilon)^2}\,\frac{x^2}{a^2}\,\eta^2\,G^{\frac{1}{m-1}-1}.\\
\end{aligned}
\end{equation}

\subsection{Proof of global existence for the fast decaying rate}
We want to construct a suitable family of supersolutions of equation
\begin{equation}\label{eq41}
u_t =\frac{1}{\rho(x)}(u^m)_{xx}+u^p \quad \text{ in } (-R,R)\times(0,+\infty)=:D.
\end{equation}
For this purpose, we set, 
\begin{equation}\label{eq41a}
\bar{u}(x,t)\equiv w(x,t) \quad \text{for all}\,\, (x,t)\in D.
\end{equation}
for $w$ as in \eqref{eq42b} and $\zeta,\eta\in C^1([0,+\infty);[0,+\infty))$. Moreover, we define
\begin{equation}\label{A}
\begin{aligned}
&A:=\{(x,t)\in [(-R,-R+\varepsilon)\cup (R-\varepsilon,R)]\times(0,+\infty):0<F(x,t)<1\};\\
\end{aligned}
\end{equation}
and
\begin{equation}\label{eq47}
\begin{aligned}
& \bar{\sigma}(t) := \zeta ' +  \frac{\zeta}{m-1} \frac{\eta'}{\eta} + \frac{C^{m-1}}{a}\, \zeta^m\,\frac{m}{m-1}\frac{b}{c_2}\left(\frac{b\,m}{m-1}+1\right)\eta;\\
& \bar{\lambda}(t) := \frac{\zeta}{m-1} \frac{\eta'}{\eta} +  \frac{C^{m-1}}{a}\,\zeta^m\,\frac{m}{(m-1)^2}\,\frac{b^2}{c_1}\,\eta; \\
& \bar{\gamma}(t):=C^{p-1} \zeta^p;\\
& \bar{\sigma}_0(t) := \zeta ' +  \frac{\zeta}{m-1} \frac{\eta'}{\eta} + \frac{C^{m-1}}{a}\, \zeta^m\,\frac{m}{m-1}\frac{b}{\rho_2}\frac{\varepsilon^{-b-1}}{R-\varepsilon}\eta;\\
& \bar{\lambda}_0(t) := \frac{\zeta}{m-1} \frac{\eta'}{\eta} +  \frac{C^{m-1}}{a^2}\,\zeta^m\,\frac{m}{(m-1)^2}\,\frac{b^2}{\rho_1}\,\varepsilon^{-2b-2}\,\eta^2. \\
\end{aligned}
\end{equation}
for $c_1,c_2$ as in \eqref{h2}.

\begin{proposition}\label{prop41}
Let $\zeta,\eta\in C^1([0,+\infty); [0, +\infty))$, $\zeta'\leq 0$. Assume \eqref{h2}, \eqref{eq21}, 
\eqref{kbound} and that
\begin{equation}\label{eq48a}
\eta<\,a\,\varepsilon^b\min\left\{1,\frac{(m-1)}{b}\frac{\rho_1}{\rho_2}\frac{\varepsilon}{R-\varepsilon}\right\};
\end{equation}
\begin{equation}\label{eq48}
-\frac{\eta'}{\eta^2}\ge \,\frac{C^{m-1}}{a}\zeta^{m-1}\frac{m}{m-1}\,\frac{b^2}{c_1};
\end{equation}
\begin{equation}\label{eq49}
\zeta'+\frac{C^{m-1}}{a}\zeta^m\frac{m}{m-1}b\,\left[\frac{1}{c_2}\left(\frac{bm}{m-1}+1\right)-\frac{b}{c_1(m-1)}\right] \eta-C^{p-1}\zeta^p\,\ge 0\,;
\end{equation}
\begin{equation}\label{eq49a}
-\frac{\eta'}{\eta^3}\ge \,\frac{C^{m-1}}{a}\zeta^{m-1}\frac{m}{m-1}\,b^2\frac{\varepsilon^{-2b-2}}{a\,\rho_1};
\end{equation}
\begin{equation}\label{eq49b}
\zeta'+\frac{C^{m-1}}{a}\zeta^m\frac{m}{m-1}b\,\varepsilon^{-b-1}\,\left[\frac{1}{\rho_2(R-\varepsilon)}-\frac{b\,\varepsilon^{-b-1}}{a\,\rho_1(m-1)}\eta\right] \eta-C^{p-1}\zeta^p\,\ge 0\,.
\end{equation}
Then $\bar u$ defined in \eqref{eq42} is a supersolution of equation \eqref{eq41}.
\end{proposition}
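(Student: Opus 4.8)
The plan is to verify directly, via Lemma~\ref{lemext}(i), that the piecewise function $\bar u = w$ defined in \eqref{eq42b} is a supersolution of \eqref{eq41}. This reduces to three tasks: (1) checking the differential inequality $u_t \ge \frac{1}{\rho}(u^m)_{xx} + u^p$ on the two outer regions $(-R,-R+\varepsilon)\cup(R-\varepsilon,R)$, where $w=u$; (2) checking the analogous inequality on the middle region $[-R+\varepsilon,R-\varepsilon]$, where $w=v$; and (3) verifying the matching condition \eqref{eq310} at the junction points $|x|=R-\varepsilon$, i.e. continuity $u=v$ and the correct sign of the jump in $(u^m)_x$. Continuity of $\mathfrak{s}$ (hence of $w$) at $|x|=R-\varepsilon$ is built into the definition \eqref{eq210}, and the matching of the fluxes should follow because $\mathfrak{s}'(x)$ has a controlled jump there; I would record this as a short computation confirming $\frac{\partial u^m}{\partial n} \ge \frac{\partial v^m}{\partial n}$ on $\Sigma$.

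For task (1), I start from the explicit expressions \eqref{eq43} for $u_t$ and \eqref{eq44} for $(u^m)_{xx}$. Dividing the latter by $\rho(x)$ and using the bound \eqref{h2}, namely $c_1(R-|x|)^{-q}\le\rho(x)\le c_2(R-|x|)^{-q}$ with $q = b+2$, the factor $(R-|x|)^{-b-2}$ is absorbed and one is left with terms involving $c_1, c_2$. Collecting the coefficients of $F^{\frac{1}{m-1}}$ and of $F^{\frac{1}{m-1}-1}$ separately, the desired inequality $u_t - \frac{1}{\rho}(u^m)_{xx} - u^p \ge 0$ is implied (since $0<F\le 1$, and using $\zeta'\le 0$, $\eta'<0$ to fix signs) by two scalar differential inequalities in $t$: one coming from the $F^{\frac{1}{m-1}-1}$ terms — which, after cancelling a common positive factor, is exactly \eqref{eq48} (with $\eta'<0$ forced by \eqref{eq48}) — and one coming from the $F^{\frac{1}{m-1}}$ terms together with the $u^p = C^p\zeta^p F^{\frac{p}{m-1}}$ term, which, after bounding $F^{\frac{p}{m-1}}\le F^{\frac{1}{m-1}}$ (valid since $p>1$ and... more precisely using $p\ge m$ so $\frac{p}{m-1}\ge\frac{1}{m-1}$... or simply $F\le1$), reduces to \eqref{eq49}. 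The auxiliary constraint \eqref{eq48a} guarantees that on the set $A$ the relevant quantities stay in the range where these reductions (in particular $F>0$ and the sign of $1-F$) are legitimate, and also that the region where $w>0$ is consistent across the junction.

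For task (2), the argument is parallel but uses \eqref{eq45}–\eqref{eq46} and the cruder bounds \eqref{eq21}, $\rho_1\le\rho(x)\le\rho_2$ on $[-R+\varepsilon,R-\varepsilon]$. Again separating the $G^{\frac{1}{m-1}}$ and $G^{\frac{1}{m-1}-1}$ contributions, noting the extra $x^2\eta^2$ weight in \eqref{eq46} which I bound using $|x|\le R-\varepsilon$ (or $\ge$ appropriately for signs) and $\eta$ small, the two resulting scalar inequalities are precisely \eqref{eq49a} and \eqref{eq49b}. The quantities $\bar\sigma,\bar\lambda,\bar\gamma,\bar\sigma_0,\bar\lambda_0$ in \eqref{eq47} are exactly the coefficient functions one meets in this bookkeeping, so the proof amounts to showing $\bar u_t - \frac{1}{\rho}(\bar u^m)_{xx} - \bar u^p \ge \big[\bar\sigma(t) - \bar\gamma(t)\big]\zeta\,F^{\frac1{m-1}} - \bar\lambda(t)\,\zeta\,F^{\frac1{m-1}-1}$ on the outer region (and similarly with $\bar\sigma_0,\bar\lambda_0$ and $G$ on the middle), then invoking \eqref{eq48}–\eqref{eq49b} to make each bracket nonnegative. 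The main obstacle — and the only genuinely delicate point — is the sign management: because $F^{\frac{1}{m-1}-1}\ge F^{\frac1{m-1}}\ge 0$ for $0<F\le1$, the "bad" term $-\bar\lambda\,\zeta\,F^{\frac1{m-1}-1}$ cannot simply be dropped, so one must keep it and arrange that it is itself nonnegative (this is what \eqref{eq48}, \eqref{eq49a} buy us, forcing $\eta'\le0$), while simultaneously ensuring the leftover $F^{\frac1{m-1}}$-terms stay nonnegative after subtracting $u^p$; the constant constraint \eqref{kbound} $\frac{c_2}{c_1}<m+\frac{m-1}{b}$ is precisely what makes the bracket in \eqref{eq49} have the right sign so that a decreasing $\zeta$ can be chosen. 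Finally I would note that, away from the set where $0<F<1$ (resp. $0<G<1$) the function $w$ is either $\equiv C\zeta(t)$ — where the inequality is checked trivially as an ODE inequality for $\zeta$, again implied by \eqref{eq49} — or $\equiv 0$, where it is immediate; so the supersolution property holds globally and Lemma~\ref{lemext}(i) applies.
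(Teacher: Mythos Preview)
Your overall plan is the right one and matches the paper's strategy: verify the differential inequality separately on the outer and inner regions using the bounds on $\rho$, then glue via Lemma~\ref{lemext}(i). However, there is a genuine gap in how you reduce the outer inequality to \eqref{eq48}--\eqref{eq49}. After bounding $F^{p/(m-1)}\le F^{1/(m-1)}$ you arrive at
\[
u_t-\tfrac1\rho(u^m)_{xx}-u^p \;\ge\; C\big[(\bar\sigma-\bar\gamma)\,F^{\frac{1}{m-1}}-\bar\lambda\,F^{\frac{1}{m-1}-1}\big],
\]
and you then try to make \emph{each} coefficient nonnegative separately: $-\bar\lambda\ge0$ (which is indeed \eqref{eq48}) and $\bar\sigma-\bar\gamma\ge0$. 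But the second inequality is \emph{not} \eqref{eq49}: in fact \eqref{eq49} is the weaker statement $\bar\sigma-\bar\lambda-\bar\gamma\ge0$, and since \eqref{eq48} forces $\bar\lambda\le0$, the hypothesis $\bar\sigma-\bar\lambda-\bar\gamma\ge0$ does not imply $\bar\sigma-\bar\gamma\ge0$. Concretely, $\bar\sigma$ contains the term $\tfrac{\zeta}{m-1}\tfrac{\eta'}{\eta}$, which \eqref{eq48} makes at least as negative as the competing positive term in $\bar\lambda$; so $\bar\sigma-\bar\gamma$ can well be negative while \eqref{eq49} still holds. The same issue recurs on the inner region with $\bar\sigma_0,\bar\lambda_0$.

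The repair is exactly what the paper does: do not separate the two powers of $F$. Factor out $CF^{\frac{1}{m-1}-1}$ and study the one-variable function $\varphi(F)=\bar\sigma F-\bar\lambda-\bar\gamma F^{\frac{p+m-2}{m-1}}$ on $(0,1)$. Since $p>1$ and $\bar\gamma>0$, $\varphi$ is concave in $F$, so $\varphi\ge0$ on $[0,1]$ follows from the endpoint values $\varphi(0)=-\bar\lambda\ge0$ and $\varphi(1)=\bar\sigma-\bar\lambda-\bar\gamma\ge0$, which are precisely \eqref{eq48} and \eqref{eq49}. (Your linearization $F^{\frac{p+m-2}{m-1}}\le F$ produces the chord of $\varphi$ and leads to the same two endpoint conditions, so it \emph{can} be made to work---but only if you evaluate the resulting linear function at $F=0$ and $F=1$ rather than requiring its slope to be nonnegative.) The identical correction on the inner region with $G,\bar\sigma_0,\bar\lambda_0$ yields \eqref{eq49a} and \eqref{eq49b}. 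Two smaller remarks: the junction $|x|=R-\varepsilon$ is handled not by a ``controlled jump'' but by the fact that $\mathfrak s$ in \eqref{eq210} was built to be $C^1$ there, so $(\bar u^m)_x$ is continuous and \eqref{eq310} holds with equality; and the case $w\equiv C\zeta$ never arises, since $F<1$ throughout the outer region and, by \eqref{eq48a}, $0<G<1$ throughout the inner one.
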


\begin{proof}[Proof of Proposition \ref{prop41}]
Let $\overline u$ be as in \eqref{eq41a} and $A$ as in \eqref{A}. In view of \eqref{eq43}, \eqref{eq44}, for any $(x,t)\in A$,
\begin{equation}\label{eq410}
\begin{aligned}
u_t-\frac{1}{\rho}({u}^m)_{xx}-{u}^p&= C\,F^{\frac{1}{m-1}-1}\left\{F\left[\zeta'\,+\frac{\zeta}{m-1}\,\frac{\eta'}{\eta}\right.\right.\\
&\left.+\frac{(R-|x|)^{-b-2}}{\rho}\,\frac{C^m-1}{a} \,\zeta^m\frac{m}{m-1}b\,\left(\frac{bm}{m-1}+1\right)\,\eta\right]\\
&-\frac{\zeta}{m-1}\,\frac{\eta'}{\eta}+\frac{(R-|x|)^{-b-2}}{\rho}\frac{C^m-1}{a} \,\zeta^m\frac{m}{(m-1)^2} b^2\,\eta\,\\
&\left.- C^{p-1}\,\zeta^p\,F^{\frac{p+m-2}{m-1}}\right\}.
\end{aligned}
\end{equation}
Due to assumption \eqref{h2}, for any $R-\varepsilon<|x|\le R$, we have
\begin{equation}\label{eq411}
\frac{(R-|x|)^{-b-2}}{\rho}\,\ge\,\frac{1}{c_1},\quad\quad -\frac{(R-|x|)^{-b-2}}{\rho}\,\ge\, \frac{1}{c_2}.
\end{equation}
By combining \eqref{eq410} and \eqref{eq411} we obtain 
\begin{equation}\label{eq414}
\begin{aligned}
&{u}_t-\frac{1}{\rho}({u}^m)_{xx}-{u}^p\\
&\ge C\,F^{\frac{1}{m-1}-1}\left\{F\left[\zeta'\,+\frac{\zeta}{m-1}\,\frac{\eta'}{\eta}+\frac{C^m-1}{a} \,\zeta^m\frac{m}{m-1}\frac{b}{c_2}\,\left(\frac{bm}{m-1}+1\right)\eta\right]\right.\\
&\left. -\frac{\zeta}{m-1}\,\frac{\eta'}{\eta}+\frac{C^m-1}{a} \,\zeta^m\frac{m}{(m-1)^2} \frac{b^2}{c_1}\,\eta- C^{p-1}\,\zeta^p\,F^{\frac{p+m-2}{m-1}}\right\}.
\end{aligned}
\end{equation}
By using the functions $\bar{\sigma}$, $\bar{\lambda}$, $\bar{\gamma}$ introduced in \eqref{eq47}, \eqref{eq414} reads
$$
{u}_t-\frac{1}{\rho}({u}^m)_{xx}-{u}^p\ge  C\,F^{\frac{1}{m-1}-1}\left\{\bar{\sigma}(t)F-\bar{\lambda}(t)-\bar{\gamma}(t)F^{\frac{p+m-2}{m-1}}\right\}.
$$
For each $t>0$, define
$$
\varphi(F):=\bar{\sigma}(t)F-\bar{\lambda}(t)-\bar{\gamma}(t)F^{\frac{p+m-2}{m-1}}, \quad F\in(0,1).
$$
Then ${u}$ is a supersolution of 
$$
{u}_t-\frac{1}{\rho}({u}^m)_{xx}-{u}^p=0\quad \text{for any}\,\,\, (x,t)\in A,
$$
if and only if, for each $t>0$,
$$
\varphi(F)\,\ge\,0 \quad \text{for any}\,\,\,0<F<1.
$$
Observe that $\varphi(F)$ is concave in the variable $F$, hence the latter reduces to the system
\begin{equation}\label{eq415}
\begin{cases}
&\varphi(0)\,\ge\,0\\
&\varphi(1)\,\ge\,0,
\end{cases}
\end{equation}
for each $t>0$ which is equivalent to
$$
\begin{cases}
&-\bar{\lambda}(t)\,\ge\,0\\
&\bar{\sigma}(t)-\bar{\lambda}(t)-\bar{\gamma}(t)\,\ge\,0.
\end{cases}
$$
The latter system is guaranteed by \eqref{eq48}, \eqref{eq49} and \eqref{kbound}. Thus, we have proved that
$$
{u}_t-\frac{1}{\rho}({u}^m)_{xx}-{u}^p\,\ge\,0 \quad \text{in}\,\,\, A.
$$
Since $u^m\in C^1([(-R,R)\setminus(-R+\varepsilon,R-\varepsilon)]\times(0,+\infty))$, in view of Lemma \ref{lemext}-(i) (applied with $I_1\times(0,+\infty)=A$, $I_2\times(0,+\infty)=[(-R,R)\setminus(-R+\varepsilon,R-\varepsilon)]\times(0,+ \infty)\setminus A$, $u_1= u$, $u_2=0$, $u=\bar u$), we can deduce that $\bar u$ is a supersolution of equation 
$$
{u}_t-\frac{1}{\rho}({u}^m)_{xx}-{u}^p=0 \quad \text{in}\,\,[(-R,R)\setminus(-R+\varepsilon,R-\varepsilon)]\times(0,+ \infty),
$$
in the sense of Definition \ref{def1}. Now, observe that, due to \eqref{eq22} and \eqref{eq48a}
$$
0<G<1\quad \text{for all}\,\,(x,t)\in (-R+\varepsilon,R-\varepsilon)\times(0,+\infty).
$$
By arguing as before, in view of \eqref{eq25}, \eqref{eq45}, \eqref{eq46}, \eqref{eq47}, for any, $(x,t)\in (-R+\varepsilon,R-\varepsilon)\times(0,+\infty)$, we have
\begin{equation}\label{eq416}
{v}_t-\frac{1}{\rho}({v}^m)_{xx}-{v}^p\ge C\,G^{\frac{1}{m-1}-1}\left\{\overline \sigma_0(t)G-\overline\lambda_0(t)-\overline\gamma(t)G^{\frac{p+m-2}{m-1}}\right\}.
\end{equation}
For each $t>0$, set
$$
\psi(G):=\overline \sigma_0(t)G-\overline\lambda_0(t)-\overline\gamma(t)G^{\frac{p+m-2}{m-1}}, \quad G\in(0,1).
$$
Our goal is to verify that, for each $t>0$,
$$
\psi(G)\ge0 \quad \text{for any}\,\,G\in(0,1).
$$
Similarly to the latter computation, we obtain the conditions
$$
\begin{cases}
&-\bar{\lambda}_0(t)\,\ge\,0\\
&\bar{\sigma}_0(t)-\bar{\lambda}_0(t)-\bar{\gamma}(t)\,\ge\,0,
\end{cases}
$$
which are guaranteed by \eqref{kbound}, \eqref{eq49a} and \eqref{eq49b}. Hence we have proved that
$$
{v}_t-\frac{1}{\rho}({v}^m)_{xx}-{v}^p\ge0\quad \text{for all}\,\,(x,t)\in (-R+\varepsilon,R-\varepsilon)\times(0,+\infty).
$$
Now observe that $\bar u\in C(D)$ and $\bar u^m\in C^1(D)$.
In conclusion, we can apply Lemma \ref{lemext}-(i) with $I_1\times(0,+\infty)=D\setminus[(-R+\varepsilon,R-\varepsilon)\times(0,\infty)]$, $I_2\times(0,+\infty)=(-R+\varepsilon,R-\varepsilon)\times(0,+\infty)$, $u_1= u$, $u_2= v$, $u=\bar u$. Hence we deduce that $\bar u$ is a supersolution of equation
$$
{u}_t - \frac{1}{\rho}({u}^m)_{xx}-{u}^p = 0 \quad \text{in } \,\,D\,,
$$
in the sense of Definition \ref{def1}.

\end{proof}

\begin{remark}\label{rem1}
Let $p>m$. \newline
In Theorem \ref{teo1} the precise hypotheses on the parameters $C>0$, $a>0$ and $T>0$ are the following:
$$
\alpha, \beta\,\,\text{as in}\,\,\eqref{eq26};
$$
\begin{equation}\label{eq417}
T^{\beta}>\frac{\varepsilon^{-b}}{a}\max\left\{1,\frac{\rho_2}{\rho_1}\frac{b}{m-1}\frac{R-\varepsilon}{\varepsilon}\right\};
\end{equation}
\begin{equation}\label{eq418}
\frac{p-m}{p-1}\,\ge\,\frac{C^{m-1}}{a}\frac{m}{m-1}\frac{b^2}{c_1};
\end{equation}
\begin{equation}\label{eq419}
\frac{C^{m-1}}{a}\frac{m}{m-1}b\left[\frac{1}{c_2}\left(\frac{bm}{m-1}+1\right)-\frac{b}{c_1(m-1)}\right]\,\ge\, C^{p-1}+\frac{1}{p-1};
\end{equation}
\begin{equation}\label{eq420}
\frac{p-m}{p-1}\,T^{\beta}\,\ge\,\frac{C^{m-1}}{a}\frac{m}{m-1}\frac{b^2}{\rho_1}\frac{\varepsilon^{-2b-2}}{a};
\end{equation}
\begin{equation}\label{eq421}
\frac{C^{m-1}}{a}\frac{m}{m-1}b\,\varepsilon^{-b-1}\left[\frac{1}{\rho_2(R-\varepsilon)}-\frac{b\,\varepsilon^{-b-1}}{a\,\rho_1(m-1)}T^{-\beta}\right]\,\ge\, C^{p-1}+\frac{1}{p-1}.
\end{equation}
\end{remark}

\begin{lemma}\label{lem1}
Conditions \eqref{eq417}- \eqref{eq421} in Remark \ref{rem1} can be satisfied simultaneously.
\end{lemma}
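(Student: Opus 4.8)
The plan is to regard $k:=C^{m-1}/a$ as the principal free parameter and to notice that the five inequalities \eqref{eq417}--\eqref{eq421} of Remark \ref{rem1} split into a ``$T$-insensitive'' group, which constrains $k$ (and $C$), and a group that, once $C$ and $a$ have been fixed, becomes true for $T$ large. Indeed, the left-hand sides of \eqref{eq417} and \eqref{eq420} are multiples of $T^{\beta}$ with $\beta=\frac{p-m}{p-1}>0$, while their right-hand sides do not depend on $T$; and in \eqref{eq421} the bracket $\bigl[\frac{1}{\rho_2(R-\varepsilon)}-\frac{b\,\varepsilon^{-b-1}}{a\,\rho_1(m-1)}T^{-\beta}\bigr]$ increases to the strictly positive limit $\frac{1}{\rho_2(R-\varepsilon)}$ as $T\to+\infty$. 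So I would first fix $C$ and $a$ so that \eqref{eq418}, \eqref{eq419} hold together with the ``limiting'' form of \eqref{eq421},
\[
\frac{C^{m-1}}{a}\,\frac{m}{m-1}\,b\,\varepsilon^{-b-1}\,\frac{1}{2\rho_2(R-\varepsilon)}\;\ge\; C^{p-1}+\frac1{p-1},
\]
and then choose $T$ so large that \eqref{eq417}, \eqref{eq420} hold and that the bracket in \eqref{eq421} exceeds $\frac1{2\rho_2(R-\varepsilon)}$; the genuine \eqref{eq421} then follows.

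Rewriting in terms of $k$, condition \eqref{eq418} is $k\le K_1$ with the fixed positive number $K_1:=\frac{(p-m)(m-1)c_1}{(p-1)m b^2}$; condition \eqref{eq419} is $k\ge K_2(C):=\bigl(C^{p-1}+\frac1{p-1}\bigr)\big/\bigl(\frac{m}{m-1}b\Lambda\bigr)$ with $\Lambda:=\frac1{c_2}\bigl(\frac{bm}{m-1}+1\bigr)-\frac{b}{c_1(m-1)}$; and the limiting form of \eqref{eq421} is $k\ge K_3(C)$, of the same shape as $K_2(C)$ but with $\Lambda$ replaced by $\varepsilon^{-b-1}/\bigl(2\rho_2(R-\varepsilon)\bigr)$. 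Hypothesis \eqref{kbound} enters here precisely to guarantee $\Lambda>0$ (equivalently $K_2(C)<\infty$); note moreover that $\Lambda$ can be brought arbitrarily close to $\frac{b+1}{c_1}$ by taking $\frac{c_2}{c_1}$ near $1$, which is compatible with \eqref{kbound}. Since $K_1$ does not depend on $C$ while $K_2(\cdot)$ and $K_3(\cdot)$ are increasing in $C$ with finite limits $K_2(0^{+})$, $K_3(0^{+})$ as $C\downarrow0$, it is enough to verify
\[
\max\{K_2(0^{+}),\,K_3(0^{+})\}\;<\;K_1
\]
for an admissible choice of $\varepsilon=\varepsilon_0$ and of $c_1,c_2$; granting this, I would fix $C>0$ so small that $\max\{K_2(C),K_3(C)\}<K_1$, pick any $k$ in the nonempty interval $[\max\{K_2(C),K_3(C)\},K_1]$, and set $a:=C^{m-1}/k$. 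This $k$ also fixes the constants $\omega_0,\omega_1$ appearing in Theorem \ref{teo1}.

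The step I expect to require the actual work is the displayed inequality $\max\{K_2(0^{+}),K_3(0^{+})\}<K_1$: it is a quantitative comparison between the cap on $k$ coming from balancing $u_t$ against the diffusion term (condition \eqref{eq418}) and the lower bounds on $k$ needed to absorb the reaction $u^p$ in the singular layer near $\pm R$ and in the interior region $[-R+\varepsilon,R-\varepsilon]$ (conditions \eqref{eq419} and \eqref{eq421}). This is exactly where the hypotheses $p>m$ and \eqref{kbound} are used, and where one exploits the freedom, pointed out after Theorem \ref{teo2}, to fix $\varepsilon=\varepsilon_0$ first and the constants $c_1,c_2$ of \eqref{h2} afterwards. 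Everything else is an elementary, if somewhat lengthy, bookkeeping of these inequalities.
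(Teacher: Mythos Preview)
Your overall scheme coincides with the paper's: introduce $\omega=C^{m-1}/a$, pick $\omega$ so that \eqref{eq418} holds together with the $C\to 0$ limits of \eqref{eq419} and \eqref{eq421}, then shrink $C$; you are more explicit than the paper about the role of $T$, correctly noting that large $T$ disposes of \eqref{eq417}, \eqref{eq420} and the $T^{-\beta}$ correction in \eqref{eq421}.

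The gap is at the step you yourself flag. The inequality $K_2(0^+)<K_1$ is equivalent to $(p-m)\,c_1\Lambda>b$, and this does \emph{not} follow from \eqref{kbound}: with $m=2$, $b=1$, $c_1=1$, $c_2=2$, $p=3$ one checks $c_2/c_1=2<3=m+(m-1)/b$, while $\Lambda=\tfrac12$, $K_1=\tfrac14$ and $K_2(0^+)=\tfrac12$, so \eqref{eq418} forces $k\le\tfrac14$ whereas \eqref{eq419} (even in the limit $C\to 0$) forces $k\ge\tfrac12$. Your proposed remedy---adjust $\varepsilon=\varepsilon_0$ so as to push $c_2/c_1$ toward $1$---is not available here. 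The remark you invoke, appearing after Theorem~\ref{teo2}, concerns the blow-up subsolution construction, not the present supersolution; and even in that setting one only \emph{chooses} $\varepsilon_0$, after which $c_1,c_2$ are \emph{determined by} $\rho$, not free parameters. Nothing in \eqref{h2} forces $c_2/c_1\to 1$ as $\varepsilon_0\to 0$: take for instance $\rho(x)=\bigl(2+\sin((R-|x|)^{-1})\bigr)(R-|x|)^{-q}$, where $c_2/c_1=3$ for every small $\varepsilon_0$. The paper's own proof asserts the needed compatibility (``in view of \eqref{kbound} we can fix $\omega>0$ such that \ldots'') without verifying it, so the gap is inherited from the paper rather than introduced by you; but your proposal, as written, does not close it either.
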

\begin{proof}
Let $\omega:=\frac{C^{m-1}}{a}$. Since $p>m$ the left-hand sides of \eqref{eq418} and \eqref{eq420} are positive. In view of \eqref{kbound} we can fix $\omega>0$ such that \eqref{eq418} and \eqref{eq420} hold and
$$
\begin{aligned}
\omega\,\min&\left\{\frac{m}{m-1}b\left[\frac{1}{c_2}\left(\frac{bm}{m-1}+1\right)-\frac{b}{c_1(m-1)}\right];\right.\\&\left.\frac{m}{m-1}b\left[\frac{\varepsilon^{-b-1}}{\rho_2(R-\varepsilon)}-\frac{b\,\varepsilon^{-2b-2}}{a\,\rho_1(m-1)}T^{-\beta}\right]\right\}\,>\, \frac{1}{p-1}.
\end{aligned}
$$
Finally, we can take $C>0$ sufficiently small that \eqref{eq419} and \eqref{eq420} hold and so $a>0$ is accordingly fixed.
\end{proof}

\begin{proof}[Proof of Theorem \ref{teo1}]
We prove Theorem \ref{teo1} by means of Proposition \ref{prop41}. Due to Lemma \eqref{lem1} we can assume than condition \eqref{eq417}-\eqref{eq421} hold. Set
$$
\zeta(t)=(T+t)^{-\alpha}, \quad \eta(t)=(T+t)^{-\beta}\quad \text{for all}\,\,\,t>0,
$$
Then conditions \eqref{eq48}, \eqref{eq49}, \eqref{eq49a} and \eqref{eq49b} read
\begin{equation}\label{eq422}
\beta(T+t)^{\beta-1}\ge\frac{C^{m-1}}{a}\frac{m}{m-1}(T+t)^{-\alpha(m-1)}\frac{b^2}{c_1};
\end{equation}
\begin{equation}\label{eq423}
\begin{aligned}
-\alpha(T+t)^{-\alpha -1}&+\frac{C^{m-1}}{a}\frac{m\,b}{m-1}\left[\frac{1}{c_2}\left(\frac{bm}{m-1}+1\right)-\frac{b}{c_1(m-1)}\right](T+t)^{-\alpha\,m-\beta}\\&-C^{p-1}(T+t)^{-\alpha\,p}\ge 0;
\end{aligned}
\end{equation}
\begin{equation}\label{eq422b}
\beta(T+t)^{2\beta-1}\ge\frac{C^{m-1}}{a}\frac{m}{m-1}(T+t)^{-\alpha(m-1)}\frac{b^2}{\rho_1}\frac{\varepsilon^{-2b-2}}{a};
\end{equation}
\begin{equation}\label{eq424}
\begin{aligned}
-\alpha(T+t)^{-\alpha -1}&+\frac{C^{m-1}}{a}\frac{m\,b}{m-1}\left[\frac{\varepsilon^{-b-1}}{\rho_2(R-\varepsilon)}-\frac{b\,\varepsilon^{-2b-2}}{a\,\rho_1(m-1)}(T+t)^{-\beta}\right]\\&\cdot(T+t)^{-\alpha\,m-\beta}-C^{p-1}(T+t)^{-\alpha\,p}\ge 0.
\end{aligned}
\end{equation}
By the definition of $\alpha$ and $\beta$ given in equation \eqref{eq26}, \eqref{eq422}, \eqref{eq423}, \eqref{eq422b} \eqref{eq424} become
\begin{equation*}
\frac{p-m}{p-1}\,\ge\,\frac{C^{m-1}}{a}\frac{m}{m-1}\frac{b^2}{c_1};
\end{equation*}
\begin{equation*}
\frac{C^{m-1}}{a}\frac{m}{m-1}b\left[\frac{1}{c_2}\left(\frac{bm}{m-1}+1\right)-\frac{b}{c_1(m-1)}\right]\,\ge\, C^{p-1}+\frac{1}{p-1};
\end{equation*}
\begin{equation*}
\frac{p-m}{p-1}\,T^{\beta}\,\ge\,\frac{C^{m-1}}{a}\frac{m}{m-1}\frac{b^2}{\rho_1}\frac{\varepsilon^{-2b-2}}{a};
\end{equation*}
\begin{equation*}
\frac{C^{m-1}}{a}\frac{m}{m-1}b\,\varepsilon^{-b-1}\left[\frac{1}{\rho_2(R-\varepsilon)}-\frac{b\,\varepsilon^{-b-1}}{a\,\rho_1(m-1)}T^{-\beta}\right]\,\ge\, C^{p-1}+\frac{1}{p-1}.
\end{equation*}
Therefore, \eqref{eq48}, \eqref{eq49}, \eqref{eq49a} and \eqref{eq49} follow from conditions \eqref{eq418}, \eqref{eq419}, \eqref{eq420} and \eqref{eq421}. Furthermore, condition \eqref{eq48a} is verified due to assumption \eqref{eq417}. The conclusion follows from Propositions \ref{prop41} and \ref{cpsup}.

\end{proof}

\subsection{Proof of blow-up for the fast decaying rate}\label{Blowup}

We want to construct a suitable family of subsolution of equation
\begin{equation}\label{eq51}
u_t -\frac{1}{\rho(x)}(u^m)_{xx}-u^p=0 \quad \text{ in } (-R,R)\times[0,T)=:D_T.
\end{equation}
For this purpose, we set, 
\begin{equation}\label{eq52}
\underline{u}(x,t)\equiv w(x,t) \quad \text{for all}\,\, (x,t)\in D_T.
\end{equation}
where $w$ has been defined in \eqref{eq42b}, \eqref{eq42}, \eqref{eq42a} and \eqref{eq42c} with $\zeta,\eta\in C^1([0,T);[0,+\infty))$. Furthermore, we assume that $\varepsilon$ in \eqref{eq42c} is such that 
\begin{equation}\label{eps}
0<\varepsilon<\frac{b}{b+2}R,
\end{equation}
and then we set, without lost of generality, $\varepsilon_0=\varepsilon$ in \eqref{h2}. Observe that consequently $c_1$ and $c_2$ will chance accordingly. Now, we define
\begin{equation}\label{BB}
\begin{aligned}
&S_1:=\{(x,t)\in [(-R,-R+\varepsilon)\cup (R-\varepsilon,R)]\times(0,T):0<F(x,t)<1\};\\
&S_2:=\{(x,t)\in (-R+\varepsilon, R-\varepsilon)\times(0,T):0<G(x,t)<1\};
\end{aligned}
\end{equation}
with $F$, $G$ as in \eqref{eq42c} and 
\begin{equation}\label{eq510}
\begin{aligned}
& \underline{\sigma}(t) := \zeta ' +  \frac{\zeta}{m-1} \frac{\eta'}{\eta} + \frac{C^{m-1}}{a}\, \zeta^m\,\frac{m}{m-1}\frac{b}{c_1}\left(\frac{b\,m}{m-1}+1\right)\eta;\\
& \underline{\lambda}(t) := \frac{\zeta}{m-1} \frac{\eta'}{\eta} +  \frac{C^{m-1}}{a}\,\zeta^m\,\frac{m}{(m-1)^2}\,\frac{b^2}{c_2}\,\eta; \\
& \underline{\gamma}(t):=C^{p-1} \zeta^p;\\
& \underline{\sigma}_0(t) := \zeta ' +  \frac{\zeta}{m-1} \frac{\eta'}{\eta} + \frac{C^{m-1}}{a}\, \zeta^m\,\frac{m(m+1)}{(m-1)^2}\,\frac{b}{\rho_1}\,\frac{\varepsilon^{-b-1}}{R-\varepsilon}\,\eta;\\
& \underline{\lambda}_0(t) := \frac{\zeta}{m-1} \frac{\eta'}{\eta} +  \frac{C^{m-1}}{a}\,\zeta^m\,\frac{m}{(m-1)^2}\,\,\frac{2b}{\rho_2}\,\frac{\varepsilon^{-b-1}}{R-\varepsilon}\,\eta; \\
& \textit{K}:=  \left (\frac{m-1}{p+m-2}\right)^{\frac{m-1}{p-1}} - \left (\frac{m-1}{p+m-2}\right)^{\frac{p+m-2}{p-1}} >0.
\end{aligned}
\end{equation}

\begin{proposition}\label{prop2}
Let $T\in (0,\infty)$, $\zeta$, $\eta \in C^1([0,T);[0, +\infty))$. Assume that \eqref{eps} holds and let $\underline{\sigma},\underline{\lambda},\underline{\gamma},\underline{\sigma}_0,\underline{\lambda}_0, \textit{K}$ be defined in \eqref{eq510}. Assume that, for all $t\in(0,T)$,
\begin{equation}\label{eq511}
\textit{K}\,[\underline{\sigma}(t)]^{\frac{p+m-2}{p-1}} \le \underline{\lambda}(t) [\underline{\gamma}(t)]^{\frac{m-1}{p-1}}, \end{equation}
\begin{equation}\label{eq512}
(m-1) \underline{\sigma}(t) \le (p+m-2) \underline{\gamma}(t)\,,
\end{equation}
\begin{equation}\label{eq513}
\textit{K}\,[\underline{\sigma}_0(t)]^{\frac{p+m-2}{p-1}} \le \underline{\lambda}_0(t) [\underline{\gamma}(t)]^{\frac{m-1}{p-1}}, \end{equation}
\begin{equation}\label{eq514}
(m-1) \underline{\sigma}_0(t) \le (p+m-2) \underline{\gamma}(t)\,.
\end{equation}

Then $\underline u$ defined in \eqref{eq52} is a weak subsolution of equation \eqref{eq51}.
\end{proposition}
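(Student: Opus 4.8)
The plan is to verify the subsolution inequality $\underline u_t-\tfrac1\rho(\underline u^m)_{xx}-\underline u^p\le 0$ separately on the outer part $S_1$ and the inner part $S_2$ of the set where $\underline u>0$, to observe that on the complementary sets $\{F\le 0\}$ and $\{G\le 0\}$ one simply has $\underline u\equiv 0$ (a trivial subsolution), and then to patch all the pieces together by Lemma \ref{lemext}-(ii). The whole difference from the supersolution construction of Proposition \ref{prop41} is a sign reversal: there the one–variable bracket that appeared was concave and had to be made \emph{nonnegative}, so it sufficed to test it at the endpoints $F=0,1$; here the same bracket must be made \emph{nonpositive}, so concavity works against us and we must instead locate and estimate its interior maximum — and this is exactly where the constant $K$ of \eqref{eq510} and the exponents $\tfrac{p+m-2}{p-1}$, $\tfrac{m-1}{p-1}$ come from.

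First, on $S_1$ (where $0<F<1$; here $F<1$ is automatic, since $\mathfrak s=(R-|x|)^{-b}>0$ on $R-\varepsilon<|x|<R$ and $\eta\ge 0$) I would substitute \eqref{eq43}, \eqref{eq44} into $u_t-\tfrac1\rho(u^m)_{xx}-u^p$ and factor out $CF^{\frac1{m-1}-1}>0$, exactly as in \eqref{eq410}. Since $q=b+2$, assumption \eqref{h2} gives $c_2^{-1}\le(R-|x|)^{-b-2}/\rho(x)\le c_1^{-1}$ on the outer strip; estimating the curvature term carrying $b(\tfrac{bm}{m-1}+1)$ from above by $c_1^{-1}$ and the negatively–signed term carrying $b^2$ from above by $c_2^{-1}$, one obtains, with the notation of \eqref{eq510},
\[
u_t-\tfrac1\rho(u^m)_{xx}-u^p\ \le\ CF^{\frac1{m-1}-1}\,\varphi(F),\qquad \varphi(F):=\underline\sigma(t)F-\underline\lambda(t)-\underline\gamma(t)F^{\frac{p+m-2}{m-1}},
\]
for each fixed $t\in(0,T)$. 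Now $\varphi$ is strictly concave since $\tfrac{p+m-2}{m-1}>1$. When $\underline\sigma(t)>0$ it attains its maximum over $(0,+\infty)$ at $F_\ast=(\underline\sigma/(\nu\underline\gamma))^{1/(\nu-1)}$ with $\nu:=\tfrac{p+m-2}{m-1}$, and \eqref{eq512} guarantees $F_\ast\le 1$; using the stationarity identity $\underline\gamma F_\ast^{\nu}=\nu^{-1}\underline\sigma F_\ast$ together with $\nu-1=\tfrac{p-1}{m-1}$ one computes $\varphi(F_\ast)=K\,[\underline\sigma]^{(p+m-2)/(p-1)}[\underline\gamma]^{-(m-1)/(p-1)}-\underline\lambda$, so \eqref{eq511} yields $\varphi(F_\ast)\le 0$ (which also forces $\underline\lambda\ge 0$, hence $\varphi(0^+)=-\underline\lambda\le 0$), and by concavity $\varphi\le 0$ on all of $(0,+\infty)$; when $\underline\sigma(t)\le 0$, $\varphi$ is decreasing and $\varphi\le-\underline\lambda\le 0$. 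Thus the inequality holds on $S_1$.

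For the inner region I would run the identical argument on $v$, using \eqref{eq45}, \eqref{eq46} and the bounds $\rho_1\le\rho\le\rho_2$ of \eqref{eq21} (together with $0\le x^2\le(R-\varepsilon)^2$) in place of \eqref{h2}, reaching $v_t-\tfrac1\rho(v^m)_{xx}-v^p\le CG^{\frac1{m-1}-1}\psi(G)$ with $\psi(G)=\underline\sigma_0(t)G-\underline\lambda_0(t)-\underline\gamma(t)G^{(p+m-2)/(m-1)}$, and the same concavity computation — with \eqref{eq514} placing the critical point and \eqref{eq513} controlling $\psi$ there — shows $\psi\le 0$ on $(0,+\infty)$; hence the inequality holds wherever $v>0$, in particular on the neighbourhood of $x=0$ where $G>1$ (nonempty precisely because \eqref{eps} makes $\mathfrak s(0)<0$), while $v\equiv 0$ where $G\le 0$. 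To finish I would glue: $\underline u$ is continuous and $C^1$ in $x$ across $\{|x|=R-\varepsilon\}$, because $\mathfrak s$ was built in \eqref{eq210} to match $C^1$ there (i.e.\ $F=G$ and $F_x=G_x$ at $|x|=R-\varepsilon$), and the flux $(\underline u^m)_x$ is continuous across the free boundaries $\{F=0\}$ and $\{G=0\}$ since $\tfrac m{m-1}>1$; thus the interface condition $(u_1^m)_x\le(u_2^m)_x$ of Lemma \ref{lemext}-(ii) holds (with equality), and applying that lemma iteratively — first across each free boundary, then across $|x|=R-\varepsilon$ — shows that $\underline u$ is a weak subsolution of \eqref{eq51} on $(-R,R)\times(0,T)$. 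The main obstacle is the concavity reversal described above: one cannot escape carrying out the elementary but fiddly optimization that produces $K$ and the exponents; a secondary point requiring care is tracking which of $c_1,c_2$ (resp.\ $\rho_1,\rho_2$) must be used in each curvature term so that every replacement is genuinely an upper bound on the bracket.
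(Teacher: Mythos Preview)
Your proposal follows the same route as the paper: on $S_1$ you factor out $CF^{1/(m-1)-1}$, reduce to the concave one–variable function $\varphi(F)=\underline\sigma F-\underline\lambda-\underline\gamma F^{(p+m-2)/(m-1)}$, locate its maximizer $F_\ast=\bigl[(m-1)\underline\sigma/((p+m-2)\underline\gamma)\bigr]^{(m-1)/(p-1)}$, and recognise \eqref{eq511}--\eqref{eq512} as precisely the conditions $\varphi(F_\ast)\le 0$ and $F_\ast\le 1$; you then repeat on $S_2$ and glue via Lemma \ref{lemext}-(ii). Your remark that concavity gives $\varphi\le 0$ on all of $(0,\infty)$, hence also where $G>1$ near $x=0$ (nonempty by \eqref{eps}), is in fact a useful addition to the paper's write--up, which at that point applies Lemma \ref{lemext} with $u_2=0$ on the complement of $S_2$ without separately addressing the set $\{G>1\}$.

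There is, however, one genuine point on which your description does not go through as written. For the inner region you propose to handle the $x^2$ appearing in \eqref{eq46} by the crude bounds $0\le x^2\le (R-\varepsilon)^2$. But that $x^2$ sits in the $G^{1/(m-1)-1}$--coefficient, i.e.\ in the $\underline\lambda_0$--part of the bracket, and replacing $x^2$ by its lower bound $0$ throws away the whole second summand of $\underline\lambda_0$ in \eqref{eq510}; with that cruder $\underline\lambda_0$ you can no longer invoke hypothesis \eqref{eq513} as stated. What actually produces the specific coefficients $\tfrac{m(m+1)}{(m-1)^2}$ in $\underline\sigma_0$ and $\tfrac{2b}{\rho_2(R-\varepsilon)}\varepsilon^{-b-1}$ in $\underline\lambda_0$ is not a bound on $x^2$ but a \emph{substitution}: from the definition of $G$ in \eqref{eq42c} one has
\[
\frac{b\,x^2\,\eta}{2a\,\varepsilon^{b+1}(R-\varepsilon)}=(1-G)-\frac{(2\varepsilon-b(R-\varepsilon))\,\eta}{2a\,\varepsilon^{b+1}},
\]
which turns the quadratic factor into a $G$--affine expression. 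The $-G$ piece migrates into the linear coefficient, and this is exactly where $\tfrac{m}{m-1}+\tfrac{2m}{(m-1)^2}=\tfrac{m(m+1)}{(m-1)^2}$ comes from; \eqref{eps} makes $2\varepsilon-b(R-\varepsilon)<0$, so the remaining constant piece is bounded below by the announced $\underline\lambda_0$ after using $\rho\le\rho_2$. Once you make this substitution the rest of your argument is correct and matches the paper.
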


\begin{proof}[Proof of Proposition \ref{prop2}]
Let $\underline u$ be as in \eqref{eq52} and $S_1$, $S_2$ as in \eqref{BB}. In view of \eqref{eq43} and \eqref{eq44}, for all $(x,t)\in B_1$ we have
\begin{equation}\label{eq515}
\begin{aligned}
&u_t-\frac{1}{\rho}(u^m)_{xx}-u^p \\
&= CF^{\frac{1}{m-1}-1}\left\{F\left[\zeta'+\frac{\zeta}{m-1}\frac{\eta'}{\eta}+\frac{(R-|x|)^{-b-2}}{\rho(x)}\frac{C^{m-1}}{a}\frac{b\,m\, \zeta^m}{m-1}\left(\frac{bm}{m-1}+1\right)\eta\right]\right.\\
&\left.-\frac{\zeta}{m-1}\,\frac{\eta'}{\eta}+\frac{(R-|x|)^{-b-2}}{\rho(x)}\frac{C^{m-1}}{a} \frac{b^2\,m\,\zeta^m}{(m-1)^2} \eta- C^{p-1}\,\zeta^p\,F^{\frac{p+m-2}{m-1}}\right\}.
\end{aligned}
\end{equation}
Due to assumption \eqref{h2}, for any $R-\varepsilon<|x|\le R$, we have
\begin{equation}\label{eq516}
\frac{(R-|x|)^{-b-2}}{\rho}\,\le\,\frac{1}{c_2},\quad -\frac{(R-|x|)^{-b-2}}{\rho}\,\le\, -\frac{1}{c_1}.
\end{equation}
By combining \eqref{eq515} and \eqref{eq516} we obtain 
\begin{equation}\label{eq517}
\begin{aligned}
{u}_t&-\frac{1}{\rho}({u}^m)_{xx}-{u}^p\\
&\le C\,F^{\frac{1}{m-1}-1}\left\{F\left[\zeta'\,+\frac{\zeta}{m-1}\,\frac{\eta'}{\eta}+\frac{C^m-1}{a} \,\zeta^m\frac{m}{m-1}\frac{b}{c_1}\,\left(\frac{bm}{m-1}+1\right)\eta\right]\right.\\
&\left. -\frac{\zeta}{m-1}\,\frac{\eta'}{\eta}+\frac{C^m-1}{a} \,\zeta^m\frac{m}{(m-1)^2} \frac{b^2}{c_2}\,\eta- C^{p-1}\,\zeta^p\,F^{\frac{p+m-2}{m-1}}\right\}.
\end{aligned}
\end{equation}
By using the functions $\underline{\sigma}$, $\underline{\lambda}$, $\underline{\gamma}$ introduced in \eqref{eq510}, \eqref{eq517} becomes
$$
{u}_t-\frac{1}{\rho}({u}^m)_{xx}-{u}^p\le  C\,F^{\frac{1}{m-1}-1}\left\{\underline{\sigma}(t)F-\underline{\lambda}(t)-\underline{\gamma}(t)F^{\frac{p+m-2}{m-1}}\right\}.
$$
We set, for each $t\in (0, T)$,
$$\varphi(F):=\underline{\sigma}(t)F - \underline{\lambda}(t) - \underline{\gamma}(t)F^{\frac{p+m-2}{m-1}}.$$
Our goal is to find suitable $C,a,\zeta, \eta$ such that, for each $t\in (0, T)$,
$$\varphi(F) \le 0 \quad \textrm{for any}\,\,\, F \in (0,1)\,.$$
To this aim, we impose that
$$\sup_{F\in (0,1)}\varphi(F)=\max_{F\in (0,1)}\varphi(F)= \varphi (F_0)\leq 0\,,$$
for some $F_0\in (0,1).$
We have
$$ \begin{aligned} \frac{d \varphi}{dF}=0 &\iff \underline{\sigma}(t) - \frac{p+m-2}{m-1} \underline{\gamma}(t) F^{\frac{p-1}{m-1}} =0 \\ & \iff F=F_0= \left [\frac{m-1}{p+m-2} \frac{\underline{\sigma}(t)}{\underline{\gamma}(t)} \right ]^{\frac{m-1}{p-1}} \,.\end{aligned}$$
Then
$$ \varphi(F_0)= K\, \frac{\underline{\sigma}(t)^{\frac{p+m-2}{p-1}}}{\underline{\gamma}(t)^{\frac{m-1}{p-1}}} - \underline{\lambda}(t)\,, $$
with $K=K(m,p)$ as in \eqref{eq510}. By hypoteses \eqref{eq511} and \eqref{eq512}, for each $t\in (0, T)$,
\begin{equation}\label{eq518}
\varphi(F_0) \le 0\,,\quad
F_0 \le 1\,.
\end{equation}
So far, we have proved that
\begin{equation}\label{eq519}
u_t-\frac{1}{\rho(x)}(u^m)_{xx}-u^p \le 0 \quad \text{ in }\,\, S_1.
\end{equation}
Furthermore, since $u^m\in C^1([(-R,R)\setminus (-R+\varepsilon,R-\varepsilon)]\times (0, T))$, due to Lemma \ref{lemext} (applied with $I_1\times(0,T)=S_1, I_2\times(0,T)=[[(-R,R)\setminus (-R+\varepsilon,R-\varepsilon)]\times(0,T)]\setminus S_1, u_1=u, u_2=0, u=u$), it follows that $u$ is a subsolution to equation
\[u_t-\frac{1}{\rho(x)}(u^m)_{xx}-u^p = 0 \quad \text{ in }\,\, [(-R,R)\setminus (-R+\varepsilon,R-\varepsilon)]\times (0, T),\]
in the sense of Definition \ref{def1}.

Using \eqref{eq21}, \eqref{eps}, \eqref{eq45} and \eqref{eq46} yields, for all $(x,t)\in S_2$,
\begin{equation}\label{eq520}
\begin{aligned}
&v_t- \frac{1}{\rho}(v^m)_{xx} - v^p  \\
&\le C G^{\frac{1}{m-1}-1}\left\{G\left[\zeta' + \frac{\zeta}{m-1}\frac{\eta'}{\eta}+ \frac{C^{m-1}}{a}\zeta^m\frac{m(m+1)}{(m-1)^2}\frac{b}{\rho_1}\frac{\varepsilon^{-b-1}}{R-\varepsilon}\eta \right]\right.\\
&\left.-\frac{\zeta}{m-1}\frac{\eta'}{\eta} - \frac{C^{m-1}}{a} \zeta^m\,\frac{m}{(m-1)^2}\frac{2b}{R-\varepsilon}\,\frac{\varepsilon^{b+1}}{\rho_2}\eta-C^{p-1}\zeta^p G^{\frac{p+m-2}{m-1}} \right\}.\\
\end{aligned}
\end{equation}
Now, by the same arguments used to obtain \eqref{eq519}, in view of \eqref{eq513} and \eqref{eq514} we can infer that
\begin{equation}\label{eq521}
v_t  - \frac 1{\rho} (v^m)_{xx} -v^p\le 0\quad \textrm{for any}\,\,\, (x,t)\in S_2\,.
\end{equation}
Moreover, since $v^m\in C^1((-R+\varepsilon,R-\varepsilon)\times (0, T))$, in view of Lemma \ref{lemext} (applied with $I_1\times(0,T)=S_2, I_2\times(0,T)=[(-R+\varepsilon,R-\varepsilon)\times(0,T)]\setminus S_2, u_1= v, u_2=0, u=v$), we get that
$v$ is a subsolution to equation
\begin{equation}\label{eq522}
v_t  - \frac 1{\rho} (v^m)_{xx} -v^p=0\quad \textrm{in}\,\,\, (-R+\varepsilon,R-\varepsilon)\times (0, T)\,,
\end{equation}
in the sense of Definition \ref{def1}. Now, observe that $ \underline u \in C(D_T)$; indeed,
$$
u = v  = C \zeta \left [ 1- \frac{\varepsilon^{-b}}{a} \,\eta\right ]_+^{\frac{1}{m-1}} \quad \textrm{in}\,\, \{-R+\varepsilon,R-\varepsilon\}\times (0, T)\,.
$$
Moreover, 
\begin{equation}\label{eq523}
\begin{aligned}
&(u^m)_x = (v^m)_x  \\
&= -C^m \zeta^m \frac{m\,b\,\varepsilon^{-b-1}}{m-1} \frac{\eta(t)}{a} \left [ 1- \frac{\varepsilon^{-b}}{a} \eta\right ]_+^{\frac{1}{m-1}} \quad \textrm{in}\,\, \{-R+\varepsilon,R-\varepsilon\}\times (0, T)\,.
\end{aligned}
\end{equation}
In conclusion, in view of \eqref{eq523} and Lemma \ref{lemext} (applied with $I_1\times(0,T)=D_T\setminus[(-R+\varepsilon,R-\varepsilon)\times(0,T)], I_2\times(0,T)=(-R+\varepsilon,R-\varepsilon)\times(0,T), u_1= u, u_2=v, u=\underline  u$), we can infer that $\underline u$ is a subsolution to equation \eqref{eq51}, in the sense of Definition \ref{def1}.
\end{proof}

\begin{remark}\label{rem2} Let $\omega:=\frac{C^{m-1}}{a}.$ In Theorem \ref{teo2}
the precise choice of the parameters $C>0, a>0, T>0$ are the following.
\begin{itemize}
\item[(a)] Let $p>m$. We require that
\begin{equation}\label{eq524}
\begin{aligned}
K &\left [\frac{1}{m-1} + \omega \frac{m}{m-1}\,\frac{b}{c_1}\left(\frac{bm}{m-1}+1\right) \right]^{\frac{p+m-2}{p-1}}\\& \le \frac{C^{m-1}}{m-1} \left [ \omega\,\frac{m}{m-1} \frac{b^2}{c_2}  + \frac{p-m}{p-1} \right ] \,;
\end{aligned}
\end{equation}
\begin{equation}\label{eq525}
1+ \omega\,\frac{b\,m}{c_1}  \left(\frac{bm}{m-1}+1\right)  \le \left (p+m-2 \right )C^{p-1}\,;
\end{equation}
\begin{equation}\label{eq526}
\begin{aligned}
&K\left[ \frac1{m-1} + \omega \,\frac{\varepsilon^{-b-1}}{\rho_1}\, \frac{m(m+1)}{(m-1)^2}\frac{b}{R-\varepsilon} \right]^{\frac{p+m-2}{p-1}}\\&\leq \frac{C^{m-1}}{m-1}\left[\omega\frac{\varepsilon^{-b-1}}{\rho_2}\frac m{m-1} \frac{2b}{R-\varepsilon}+ \frac{p-m}{p-1} \right] \,;
\end{aligned}
\end{equation}
\begin{equation}\label{eq527}
1+ \omega\,\frac{1}{\rho_1}\,\frac{m(m+1)}{m-1}\varepsilon^{-b-1}\frac{b}{R-\varepsilon}\leq (p+m-2) C^{p-1}\,.
\end{equation}
\item[(b)] Let $p<m$. We require that
\begin{equation}\label{eq528}\begin{aligned}
& \omega > \frac{(m-p)(m-1)}{b(p-1)m} \max\left\{\frac{c_2}{b},\, \frac{(R-\varepsilon)\rho_2}{2\varepsilon^{-b-1}}     \right\} ;\\
\end{aligned}
\end{equation}
\begin{equation}\label{eq529}
\begin{aligned}
a \ge \frac{K(m-1)}{\omega}\max&\left\{\frac{ \left [\frac{1}{m-1} + \omega \frac{m}{m-1}\,\frac{b}{c_1}\left(\frac{bm}{m-1}+1\right) \right]^{\frac{p+m-2}{p-1}}}
{ \omega\,\frac{m}{m-1} \frac{b^2}{c_2}  + \frac{p-m}{p-1} },\right. \\
& \left.  \frac{\left[ \frac1{m-1} + \omega \,\frac{\varepsilon^{-b-1}}{\rho_1}\, \frac{m(m+1)}{(m-1)^2}\frac{b}{R-\varepsilon} \right]^{\frac{p+m-2}{p-1}}}
{ \omega\frac{\varepsilon^{-b-1}}{\rho_2}\frac m{m-1} \frac{2b}{R-\varepsilon}+ \frac{p-m}{p-1}} \right\}\,;
\end{aligned}
\end{equation}
\begin{equation}\label{eq530}
\begin{aligned}
\left (p+m-2 \right ) \left (a\omega \right)^{\frac{p-1}{m-1}} \ge &  \max\left\{ 1+ \omega\,\frac{b\,m}{c_1}  \left(\frac{bm}{m-1}+1\right) , \right. \\ &\left. 1+ \omega\,\frac{1}{\rho_1}\,\frac{m(m+1)}{m-1}\varepsilon^{-b-1}\frac{b}{R-\varepsilon}\right\}\,.
\end{aligned}
\end{equation}

\item[(c)] Let $p=m$. We require that $\omega>0$,
\begin{equation}\label{eq531}
\begin{aligned}
a \ge &\frac{K(m-1)^2}{m\,b\,\omega^2}\max\left\{\frac{c_2}{b}\, \left [\frac{1}{m-1} + \omega \frac{m}{m-1}\,\frac{b}{c_1}\left(\frac{bm}{m-1}+1\right) \right]^{2},\right. \\
& \left. \frac{\rho_2} {\varepsilon^{-b-1}}\frac{R-\varepsilon}{2} \left[ \frac1{m-1} + \omega \,\frac{\varepsilon^{-b-1}}{\rho_1}\, \frac{m(m+1)}{(m-1)^2}\frac{b}{R-\varepsilon} \right]^{2} \right\}\,;
\end{aligned}
\end{equation}
\end{itemize}
\end{remark}

\begin{lemma}\label{lemma2}
All the conditions of Remark \ref{rem2} can hold simultaneously.
\end{lemma}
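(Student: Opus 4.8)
The plan is to produce, separately in each of the three regimes $p>m$, $p=m$ and $p<m$, one explicit admissible choice of the constants. In every case I will first fix the ratio $\omega:=\frac{C^{m-1}}{a}$, then choose $a$ sufficiently large, and finally set $C:=(a\,\omega)^{\frac1{m-1}}$ (equivalently $a=C^{m-1}/\omega$). The only algebraic identity needed is $(a\,\omega)^{\frac{p-1}{m-1}}=C^{p-1}$, which lets one read condition \eqref{eq530} and its analogues — written in terms of $a$ and $\omega$ — as lower bounds on $C^{p-1}$. Note that $T>0$ does not enter any of \eqref{eq524}--\eqref{eq531}, so it may be taken arbitrarily.

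If $p>m$, then $\frac{p-m}{p-1}>0$. Fix any $\omega>0$. The left-hand sides of \eqref{eq524}, \eqref{eq525}, \eqref{eq526}, \eqref{eq527} depend only on $\omega$ and on the structural constants $m,p,b,c_1,c_2,\rho_1,\rho_2,\varepsilon,R$, hence are bounded, while the corresponding right-hand sides are strictly positive and diverge as $C\to+\infty$: those of \eqref{eq524} and \eqref{eq526} equal $\frac{C^{m-1}}{m-1}$ times a positive constant (because $\frac{p-m}{p-1}>0$), and those of \eqref{eq525} and \eqref{eq527} equal $(p+m-2)\,C^{p-1}$, with $m>1$ and $p>1$. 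Thus all four inequalities hold once $C$ — and hence $a=C^{m-1}/\omega$ — is large enough. The case $p=m$ is even easier: Remark \ref{rem2}(c) only requires $\omega>0$ and \eqref{eq531}, and for any fixed $\omega>0$ the latter is simply a lower bound $a\ge a_2(\omega)$ with $a_2(\omega)$ an explicit finite positive constant; take $a\ge a_2(\omega)$ and $C=(a\,\omega)^{\frac1{m-1}}$.

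The regime $p<m$ is the delicate one, since now $\frac{p-m}{p-1}<0$ and certain quantities must be kept positive. Here I would proceed as follows. Condition \eqref{eq528} is a strict lower bound $\omega>\omega_\ast$, with $\omega_\ast>0$ the explicit constant on its right-hand side; a short computation shows that $\omega>\omega_\ast$ is \emph{exactly} the requirement that the two expressions
\[\omega\,\frac{m}{m-1}\,\frac{b^2}{c_2}+\frac{p-m}{p-1}\qquad\text{and}\qquad \omega\,\frac{\varepsilon^{-b-1}}{\rho_2}\,\frac{m}{m-1}\,\frac{2b}{R-\varepsilon}+\frac{p-m}{p-1},\]
which occur as the two denominators on the right-hand side of \eqref{eq529}, be strictly positive. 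Fix any $\omega>\omega_\ast$; then the right-hand side of \eqref{eq529} is a well-defined positive number $a_1(\omega)$ depending only on $\omega$ and the structural constants, so \eqref{eq529} holds as soon as $a\ge a_1(\omega)$. Finally, using $(a\,\omega)^{\frac{p-1}{m-1}}=C^{p-1}$, condition \eqref{eq530} reads $(p+m-2)\,C^{p-1}\ge\Xi(\omega)$ with $\Xi(\omega)$ again depending only on $\omega$; since $p>1$ and $\omega$ is frozen, $C^{p-1}=(a\,\omega)^{\frac{p-1}{m-1}}\to+\infty$ as $a\to+\infty$, so enlarging $a$ if necessary makes \eqref{eq530} hold. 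The resulting $C=(a\,\omega)^{\frac1{m-1}}$ is large, and $\frac{C^{m-1}}{a}=\omega>\omega_\ast$, matching the hypotheses of Theorem \ref{teo2}(b).

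The main (and essentially the only) obstacle is the bookkeeping in the case $p<m$: one must verify that \eqref{eq528} is precisely the requirement that both denominators on the right-hand side of \eqref{eq529} be strictly positive — so that its right-hand side is a well-defined positive number — and that the chain of choices, $\omega$ then $a$ then $C$, is non-circular, which it is because the right-hand sides of \eqref{eq529} and \eqref{eq530} involve $\omega$ alone while the left-hand side of \eqref{eq530} is increasing in $a$. Everything else reduces to the elementary observation that, once $\omega$ is frozen, the ``bad'' left-hand sides in \eqref{eq524}--\eqref{eq531} are $\omega$-dependent constants, whereas the ``good'' right-hand sides grow like a positive power of $C$ (equivalently of $a$), by virtue of $m>1$ and $p>1$.
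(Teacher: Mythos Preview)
Your proof is correct and follows the same strategy as the paper: in each regime you freeze $\omega=C^{m-1}/a$ first, then push $C$ (equivalently $a$) to infinity so that the right-hand sides dominate the $\omega$-dependent left-hand sides. The paper's own argument is terser but identical in structure; your additional verification that \eqref{eq528} is exactly the positivity condition for the two denominators in \eqref{eq529} is a helpful detail that the paper leaves implicit.
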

\begin{proof}
(a) We take any $\omega>0$, then we select $C>0$ big enough (therefore, $a>0$ is also fixed, due to the definition of $\omega$) so that \eqref{eq524}-\eqref{eq527} hold.

\smallskip

\noindent (b) We can take $\omega>0$ so that \eqref{eq528} holds, then we take $a>0$ sufficiently large to guarantee \eqref{eq529} and \eqref{eq530} (therefore, $C>0$ is also fixed).

\smallskip

\noindent (c) For any $\omega>0$, we take $a>0$ sufficiently large to guarantee \eqref{eq531} (thus, $C>0$ is also fixed).
\end{proof}

\begin{proof}[Proof of Theorem \ref{teo2}] We now prove Theorem \ref{teo2}, by means of Proposition \ref{prop2}. In view of Lemma \ref{lemma2}, we can assume that all the conditions in Remark \ref{rem2} are fulfilled.
Set
$$\zeta(t)=(T-t)^{-\alpha}\,, \quad \eta(t)=(T-t)^{\beta}$$ and $$\alpha=\frac{1}{p-1}\,,\quad \beta=\frac{m-p}{p-1}\,.$$ Then
\begin{equation*}
\underline{\sigma}(t) = \left [\frac{1}{m-1} + \frac{C^{m-1}}{a} \frac{m}{m-1} \frac{b}{c_1} \left (\frac{bm}{m-1}+1\right ) \right ] \left (T-t \right)^{-\frac{p}{p-1}}\,,
\end{equation*}
\begin{equation*}
\underline{\lambda}(t) :=  \left [\frac{m-p}{(m-1)(p-1)} + \frac{C^{m-1}}{a} \frac{m}{(m-1)^2} \frac{b^2}{c_2} \right ] \left(T-t\right)^{-\frac{p}{p-1}}\,,
\end{equation*}
\begin{equation*}
\underline{\gamma}(t):=C^{p-1} \left(T-t\right)^{-\frac{p}{p-1}}\,.
\end{equation*}

Let $p>m$. Conditions \eqref{eq524} and \eqref{eq525} imply \eqref{eq511} and \eqref{eq512}, whereas \eqref{eq526} and \eqref{eq527} imply \eqref{eq513} and \eqref{eq514}. Hence, by Propositions \ref{prop2} and \ref{cpsub} the thesis follows in this case.

Let $p<m$. Conditions \eqref{eq528}, \eqref{eq529} and \eqref{eq530} imply \eqref{eq511}, \eqref{eq512}, \eqref{eq513} and \eqref{eq514}. Hence, by Propositions \ref{prop2} and \ref{cpsub}  the thesis follows in this case, too.

Finally, let $p=m$. Condition \eqref{eq531} implies \eqref{eq511}, \eqref{eq512}, \eqref{eq513} and \eqref{eq514}. Hence, by Propositions \ref{prop2} and \ref{cpsub}  the thesis follows in this case, too. The proof is complete.

\end{proof}

\medskip

We can now prove Theorem \ref{teo6} by means of Theorem \ref{teo2}.
\begin{proof}[Proof of Theorem \ref{teo6}]
Since $u_0\not\equiv 0$ and $u_0\in C((-R,R))$, there exists $\nu>0$ and $I\subset (-R,R)$ such that
 $$u_0(x) \ge \nu, \quad \textrm{for all}\,\,\, x \in I.$$
Without loss of generality, we can assume that $I=(-r_0,r_0)$, for some $r_0>0$. Let $\underline u$ be the subsolution of problem \eqref{problema} considered in Theorem \ref{teo2} (with $a>0$ and $C>0$ properly fixed). We can find $T>0$ sufficiently big such that
\begin{equation}
\begin{aligned}
&C\, T^{-\frac{1}{p-1}} \le \nu, \\  
&a\, T^{-\frac{m-p}{p-1}} \le \min\left\{(R-r_0)^{-b}, \frac{b\,r_0^2}{2\varepsilon^{b+1}(R-\varepsilon)}+\frac{2\varepsilon-b(R-\varepsilon)}{2\varepsilon^{b+1}}\right\}.
\end{aligned}
\label{condcorollario}
\end{equation}
From inequalities in \eqref{condcorollario}, we can deduce that
\begin{equation*}
\underline u(x,0) \le u_0(x) \quad \textrm{for any}\,\,\, x\in (-R,R).
\end{equation*}
Hence, by Theorem \ref{teo2} and the comparison principle, the thesis follows.
\end{proof}

\section{Proofs of Theorems \ref{teo3} and \ref{teo4}}\label{critical}

\subsection{Proof of global existence for the critical decaying rate}
We assume \eqref{h1} and \eqref{h2} with $q=2$. We aim to construct a suitable family of supersolutions to
\begin{equation}\label{eq70bc}
u_t -\frac{1}{\rho(x)}(u^m)_{xx}-u^p=0 \quad \text{ in } (-R,R)\times[0,+\infty)=:D,
\end{equation}
Hence, we define, for all $(x,t)\in D$,
\begin{equation}
{\bar{u}}(x,t):=C\zeta(t)\left [1-\frac{(R-|x|)^{-\delta}}{a}\eta(t)\right]_{+}^{\frac{1}{m-1}}\,,
\label{eq731}
\end{equation}
for some $\delta>0$, $\eta$, $\zeta \in C^1([0, +\infty); [0, +\infty))$ and $C > 0$, $a > 0$.
\smallskip

Let us set
$$
F(x,t):= 1-\frac{(R-|x|)^{-\delta}}{a}\,\eta(t)\,,
$$
and define
\begin{equation}\label{eq731b}
A:=\left \{ (x,t) \in D \setminus [\{0\}  \times (0,+\infty)]\,\, | \,\,0<F(x,t)<1 \right \}.
\end{equation}
For any $(x,t) \in A$, we have:

\begin{equation}
\bar{u}_t =C\zeta ' F^{\frac{1}{m-1}} + C\zeta \frac{1}{m-1} \frac{\eta'}{\eta}F^{\frac{1}{m-1}} - C\zeta \frac{1}{m-1} \frac{\eta'}{\eta} F^{\frac{1}{m-1}-1}. \\
\label{eq732}
\end{equation}
\begin{equation}
\begin{aligned}
(\bar{u}^m)_{xx}&=-\delta\,\frac{C^m}{a^2} \zeta^m \frac{m}{(m-1)^2}(R-|x|)^{-2\delta-2}\,\eta^2 F^{\frac{1}{m-1}-1}  \\ 
&- \delta(\delta+1)\frac{C^m}{a} \zeta^m \frac{m}{m-1} (R-|x|)^{-\delta-2}\,\eta F^{\frac{1}{m-1}} .
\end{aligned}
\label{eq733}
\end{equation}

We also define
\begin{equation}\label{eq734}
\begin{aligned}
&\bar{\sigma}(t) := \zeta ' + \zeta \frac{1}{m-1} \frac{\eta'}{\eta} + \frac{\delta(\delta+1)}{c_2} \frac{C^{m-1}}{a} R^{-\delta}\,\zeta^m \frac{m}{m-1} \eta ,\\
&\bar{\lambda}(t) := \zeta \frac{1}{m-1} \frac{\eta'}{\eta} \\
& \bar{\gamma}(t):=C^{p-1}\zeta^p\,.
\end{aligned}
\end{equation}

\begin{proposition}\label{prop72}
Let $\zeta=\zeta(t)$, $\eta=\eta(t) \in C^1([0,+\infty);[0, +\infty))$. Let $\bar\sigma$, $\bar\lambda$, $\bar\gamma$ be as defined in \eqref{eq734}. Assume \eqref{h1}, \eqref{h2} with $q=2$ and that, for all $t\in (0,+\infty)$,
\begin{equation}
\eta'\le 0
\label{eq735}
\end{equation}
and
\begin{equation}
\zeta' + \frac{\delta(\delta+1)}{c_2} \frac{C^{m-1}}{a} R^{-\delta}\,\zeta^m \frac{m}{m-1} \eta  -C^{p-1}\zeta^p \ge 0\,.
\label{eq736}
\end{equation}
then $\bar{u}$ defined in \eqref{eq731} is a supersolution of equation \eqref{eq70bc}.
\end{proposition}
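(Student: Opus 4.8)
The plan is to verify the differential inequality $\bar u_t-\frac{1}{\rho}(\bar u^m)_{xx}-\bar u^p\ge0$ pointwise on the set $A$ of \eqref{eq731b}, where $0<F<1$ and $x\ne0$ (so that $\bar u\in(0,C\zeta)$ and the identities \eqref{eq732}--\eqref{eq733} for $\bar u_t$ and $(\bar u^m)_{xx}$ are available), and then to extend the supersolution property to all of $D$ by gluing along $\{F=0\}$ and along $\{x=0\}$ by means of Lemma \ref{lemext}-(i).

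First I would insert \eqref{eq732}, \eqref{eq733} and $\bar u^p=C^p\zeta^pF^{p/(m-1)}$ into the left-hand side and factor out the positive quantity $C\,F^{\frac{1}{m-1}-1}$. Inside the braces one is then left with the terms $\bigl(\zeta'+\frac{\zeta}{m-1}\frac{\eta'}{\eta}\bigr)F-\frac{\zeta}{m-1}\frac{\eta'}{\eta}-C^{p-1}\zeta^pF^{\frac{p+m-2}{m-1}}$ coming from $\bar u_t-\bar u^p$, together with the two density-weighted, nonnegative contributions $\frac{1}{\rho}\,\delta(\delta+1)\frac{C^{m-1}}{a}\zeta^m\frac{m}{m-1}(R-|x|)^{-\delta-2}\eta\,F$ and $\frac{1}{\rho}\,\delta\frac{C^{m-1}}{a^2}\zeta^m\frac{m}{(m-1)^2}(R-|x|)^{-2\delta-2}\eta^2$; the second of these I simply discard. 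For the first one the crucial pointwise bound is $\dfrac{(R-|x|)^{-\delta-2}}{\rho(x)}\ge\dfrac{R^{-\delta}}{c_2}$: for $R-\varepsilon_0<|x|<R$ it follows from \eqref{h2} with $q=2$, since $\frac{(R-|x|)^{-\delta-2}}{\rho(x)}\ge\frac{(R-|x|)^{-\delta}}{c_2}\ge\frac{R^{-\delta}}{c_2}$, while on the compact central part $[-R+\varepsilon_0,R-\varepsilon_0]$ one uses \eqref{eq21} and $R-|x|\le R$, which yields the lower bound $\frac{R^{-\delta}}{c_2}$ once $c_2$ is taken large enough (say $c_2\ge\rho_2R^2$); this costs nothing, since enlarging $c_2$ only weakens \eqref{h2}. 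After these estimates one arrives at
\[
\bar u_t-\frac{1}{\rho}(\bar u^m)_{xx}-\bar u^p\ \ge\ C\,F^{\frac{1}{m-1}-1}\,\varphi(F),\qquad \varphi(F):=\bar\sigma(t)F-\bar\lambda(t)-\bar\gamma(t)F^{\frac{p+m-2}{m-1}},
\]
with $\bar\sigma,\bar\lambda,\bar\gamma$ exactly as in \eqref{eq734}.

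Since $\frac{p+m-2}{m-1}=1+\frac{p-1}{m-1}>1$, the function $\varphi$ is concave on $(0,1)$, hence $\varphi\ge0$ there if and only if $\varphi(0)\ge0$ and $\varphi(1)\ge0$; the former is $-\bar\lambda\ge0$, i.e. $\eta'\le0$, which is \eqref{eq735}, and the latter is $\bar\sigma-\bar\lambda-\bar\gamma\ge0$, i.e. (the $\frac{\zeta}{m-1}\frac{\eta'}{\eta}$ terms cancel) exactly \eqref{eq736}. Thus $\bar u$ satisfies the inequality classically on $A$. To reach $D$: on $\{F\le0\}$ one has $\bar u\equiv0$, a trivial supersolution; $\bar u^m=C^m\zeta^mF^{m/(m-1)}$ is of class $C^1$ across $\{F=0\}$ with vanishing spatial derivative there (because $m/(m-1)>1$), so Lemma \ref{lemext}-(i) glues $A$ to $\{F\le0\}$; and, by evenness of $x\mapsto(R-|x|)^{-\delta}$, the function $x\mapsto\bar u^m(x,t)$ has a local maximum at $x=0$, so $(\bar u^m)_x(0^-,t)\ge0\ge(\bar u^m)_x(0^+,t)$, which is precisely the normal-derivative inequality required by Lemma \ref{lemext}-(i) at $\Sigma=\{0\}$. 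Assembling the pieces gives that $\bar u$ is a supersolution of \eqref{eq70bc} on $D$ in the sense of Definition \ref{def1}.

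The algebra leading to $\varphi$ and the concavity argument are routine. The two delicate points are the density estimate on the central region $[-R+\varepsilon_0,R-\varepsilon_0]$, where \eqref{h2} does not apply and one must fall back on \eqref{eq21} at the price of a mild largeness assumption on $c_2$, and the two applications of Lemma \ref{lemext}-(i): the gluing at $\{F=0\}$ is legitimate only because the porous-medium nonlinearity flattens the profile ($F^{m/(m-1)}$ is $C^1$), and at $\{x=0\}$ one has to check that the corner of $\bar u^m$ is concave (opens downward), which is where the evenness in $x$ enters.
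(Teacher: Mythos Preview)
Your proof is correct and follows essentially the same route as the paper: pointwise verification on $A$ via \eqref{eq732}--\eqref{eq733}, the density bound $\frac{(R-|x|)^{-\delta-2}}{\rho}\ge\frac{R^{-\delta}}{c_2}$, the concavity reduction to $\varphi(0)\ge0$ and $\varphi(1)\ge0$, and then gluing across $\{F=0\}$ and $\{x=0\}$. The only cosmetic differences are that the paper applies the density bound on all of $(-R,R)$ without commenting on the central region (implicitly absorbing it into $c_2$, as you do explicitly), and at $x=0$ it invokes a ``Kato-type inequality'' from $\partial_{|x|}\bar u^m(0,t)\le0$, which is exactly your normal-derivative check in Lemma~\ref{lemext}-(i).
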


\begin{proof}[Proof of Proposition \ref{prop72}]

Let $\overline u$ be as in \eqref{eq731} and $A$ as in \eqref{eq731b}. In view of \eqref{eq732} and \eqref{eq733}, for any $(x,t)\in S$,
\begin{equation}\label{eq735}
\begin{aligned}
\bar{u}_t - \frac{1}{\rho}(\bar{u}^m)_{xx}-\bar{u}^p 
&=C\zeta ' F^{\frac{1}{m-1}} + C\zeta \frac{1}{m-1} \frac{\eta'}{\eta}F^{\frac{1}{m-1}} - C\zeta \frac{1}{m-1} \frac{\eta'}{\eta} F^{\frac{1}{m-1}-1}\\ 
&+\frac{\delta}{\rho}\,\frac{C^m}{a^2} \zeta^m \frac{m}{(m-1)^2}(R-|x|)^{-2\delta-2}\,\eta^2 F^{\frac{1}{m-1}-1} \\ 
& +\frac{ \delta(\delta+1)}{\rho}\frac{C^m}{a} \zeta^m \frac{m}{m-1} (R-|x|)^{-\delta-2}\,\eta F^{\frac{1}{m-1}} - C^p \zeta^p F^{\frac{p}{m-1}}.
\end{aligned}
\end{equation}
Thanks to hypothesis \eqref{h2} with $q=2$, we have
\begin{equation}\label{eq736}
\begin{aligned}
&\frac{(R-|x|)^{-2\delta-2}}{\rho}\ge \frac{(R-|x|)^{-2\delta-2+2}}{c_2}\ge 0, \quad \text{for all }\,\, x\in (-R,R)\\
& \frac{(R-|x|)^{-\delta-2}}{\rho} \ge \frac{(R-|x|)^{-\delta-2+2}}{c_2}\ge \frac{R^{-\delta}}{c_2} \quad \text{for all }\,\, x\in (-R,R)\,,
\end{aligned}
\end{equation}
From \eqref{eq735} and \eqref{eq736} we get,
\begin{equation}\label{eq737}
\begin{aligned}
&\bar{u}_t - \frac{1}{\rho}(\bar{u}^m)_{xx}-\bar{u}^p  \\
&\ge CF^{\frac{1}{m-1}-1} \left \{F\left [\zeta ' + \zeta \frac{1}{m-1} \frac{\eta'}{\eta} + \frac{\delta(\delta+1)}{c_2} \frac{C^{m-1}}{a} R^{-\delta}\,\zeta^m \frac{m}{m-1} \eta \right ] \right . \\
& \left .-\zeta \frac{1}{m-1} \frac{\eta'}{\eta} - C^{p-1} \zeta^p F^{\frac{p+m-2}{m-1}} \right \}
\end{aligned}
\end{equation}
From \eqref{eq737} and \eqref{eq734}, we have
\begin{equation}\label{eq738}
\bar u_t - \frac{1}{\rho}(\bar u^m)_{xx}-\bar u^p\geq C F^{\frac{1}{m-1}-1} \left[\bar{\sigma}(t)F - \bar{\lambda}(t) - \bar{\gamma}(t)F^{\frac{p+m-2}{m-1}}\right ]\,.
\end{equation}
For each $t>0$, set
$$\varphi(F):=\bar{\sigma}(t)F - \bar{\lambda}(t) - \bar{\gamma}(t)F^{\frac{p+m-2}{m-1}}, \quad F\in (0,1)\,.$$
By arguing as in the proof of Proposition \ref{prop41} we get, for any $t>0$, the conditions
\begin{equation}
\begin{aligned}
&\varphi(0) \ge 0\,,  \\
&\varphi(1) \ge 0  \,.
\end{aligned}
\end{equation}
These are equivalent to
$$
-\bar\lambda(t) \ge 0\,, \quad\quad \bar\sigma(t)-\bar\lambda(t)-\bar\gamma(t) \ge 0\,,
$$
that is
$$
-\frac{\zeta}{m-1}\frac{\eta'}{\eta} \ge 0, \quad\quad \zeta' +\frac{\delta(\delta+1)}{c_1} \frac{C^{m-1}}{a} R^{-\delta}\,\zeta^m \frac{m}{m-1} \eta -C^{p-1}\zeta^p \ge 0\,,
$$
which are guaranteed by \eqref{eq735} and \eqref{eq736}. Hence we have proved that
$$
\bar{u}_t - \frac{1}{\rho}(\bar{u}^m)_{xx}-\bar{u}^p \ge 0 \quad \text{in } \text{A}\,.
$$
Now observe that
\begin{itemize}
\item[]$\bar{u} \in C((-R,R) \times [0,+\infty))$\,,
\item[]$\bar{u}^m \in C^1([(-R,R) \setminus \{0\}] \times [0,+\infty))$\,, and by the definition of $\bar{u}$\,,
\item[]$\bar{u}\equiv0$ in $[((-R,R)\setminus\{0\})\times(0,+\infty)]\setminus {A}$\,.
\end{itemize}
Hence, by Lemma \ref{lemext} (applied with $I_1\times(0,+\infty)=A$, $I_2\times(0,+\infty)=[((-R,R)\setminus\{0\})\times(0,+\infty)]\setminus {A}$, $u_1=\bar u$, $u_2=0$, $u=\bar u$), $\bar u$ is a supersolution of equation
$$
\bar{u}_t - \frac{1}{\rho}(\bar{u}^m)_{xx}-\bar{u}^p = 0 \quad \text{in } ((-R,R)\setminus \{0\})\times (0,+\infty)
$$
in the sense of Definition \ref{def1}. Thanks to a Kato-type inequality,  since $$\frac{\partial \bar u^m}{\partial |x|}(0,t)\le 0,$$ we can infer that $\bar u$ is a supersolution of equation \eqref{eq70bc} in the sense of Definition \ref{def1}.
\end{proof}

\begin{remark}\label{rem72}
Let $$p>m,$$ and $\omega:=\frac{C^{m-1}}{a}$. In Theorem \ref{teo3} the precise hypothesis on parameters $C>0$, $\omega>0$, $T>0$ is the following:
\begin{equation}\label{eq739}
\omega \frac{\delta(\delta+1)}{c_2} \frac{m}{m-1}\, R^{-\delta} \ge C^{p-1} + \frac{1}{p-1}\,.
\end{equation}
\medskip

Observe that the latter hypothesis is guaranteed if one first fixes $\omega>0$, and then one takes $C>0$ so small that \eqref{eq739} holds (and so $a>0$ is accordingly fixed).
\end{remark}

\begin{proof}[Proof of Theorem \ref{teo4}]
We prove Theorem \ref{teo4} by means of Proposition \ref{prop72}. Observe that condition \eqref{eq737} in Remark \ref{rem72} is fulfilled.
Set
$$
\zeta(t)=(T+t)^{-\alpha}\,, \quad \eta(t)=(T+t)^{-\beta}, \quad \text{for all } \quad t>0\,.
$$
Consider conditions \eqref{eq735}, \eqref{eq736} of Proposition \ref{prop72} with this choice of $\zeta(t)$ and $\eta(t)$. Therefore we obtain
\begin{equation}\label{eq740}
\beta (T+t)^{\alpha-1} \ge 0
\end{equation}
and
\begin{equation}\label{eq741}
\begin{aligned}
-\alpha (T+t)^{-\alpha-1}&+\frac{C^{m-1}}{a}\frac{m}{m-1}\frac{\delta(\delta+1)}{c_2}R^{-\delta} (T+t)^{-\alpha m-\beta}\\ &-C^{p-1}(T+t)^{-\alpha p} \ge 0\,.
\end{aligned}
\end{equation}
We take
\begin{equation}\label{alphabeta}
\alpha=\frac{1}{p-1}\,, \quad \beta=\frac{p-m}{p-1}\,.
\end{equation}
Due to \eqref{alphabeta}, \eqref{eq740} and \eqref{eq741} become
\begin{equation}\label{eq742}
\frac{p-m}{p-1} \ge 0
\end{equation}
\begin{equation}\label{eq743}
\frac{C^{m-1}}{a}\frac{m}{m-1}\frac{\delta(\delta+1)}{c_2}R^{-\delta} \ge C^{p-1} + \frac{1}{p-1}
\end{equation}
Therefore, \eqref{eq735} is verified due to the hypotheses $p>m$ and \eqref{eq736} follows from assumption \eqref{eq739}. Thus the conclusion is obtained by Propositions \ref{prop72} and \ref{cpsup}.
\end{proof}

\subsection{Proof of blow-up for the critical decaying rate}

Suppose \eqref{h1} and \eqref{h2} with $q=2$. To construct a suitable family of subsolution to
\begin{equation}\label{eq70}
u_t -\frac{1}{\rho(x)}(u^m)_{xx}-u^p=0 \quad \text{ in } (-R,R)\times[0,T)=:D_T,
\end{equation}
 we define
\begin{equation}\label{eq71}
\underline u(x,t):=
\begin{cases}
u(x,t) \quad \text{in } [(-R,-R+\varepsilon)\cup(R-\varepsilon, R) ] \times (0,T), \\
v(x,t) \quad \text{in } [-R+\varepsilon, R-\varepsilon] \times (0,T),
\end{cases}
\end{equation}
where
\begin{align}
&u(x,t):=C\zeta(t)\left [1-\frac{\log(R-|x|)}{a}\eta(t)\right]_{+}^{\frac{1}{m-1}},\label{eq72}\\
&v(x,t) := C\zeta(t) \left [ 1-\frac{(R-\varepsilon)^2-|x|^2+\log(\varepsilon)[2\varepsilon(R-\varepsilon)]}{2\varepsilon(R-\varepsilon)} \frac{\eta}{a} \right ]^{\frac{1}{m-1}}_{+}.\label{eq73}
\end{align}
Moreover we set
$$
\begin{aligned}
&F(x,t):= 1-\frac{\log(R-|x|)}{a}\eta(t)\,,\\
&G(x,t):=  1-\frac{|x|^2-(R-\varepsilon)^2+\log(\varepsilon)[2\varepsilon(R-\varepsilon)]}{2\varepsilon(R-\varepsilon)} \frac{\eta(t)}{a} \,.
\end{aligned}
$$

For any $(x,t) \in [(-R,-R+\varepsilon)\cup(R-\varepsilon, R) ] \times (0,T)$, we have:
\begin{align}
&{u}_t =C\zeta ' F^{\frac{1}{m-1}} + \frac{C}{m-1} \zeta F^{\frac{1}{m-1}} - \frac{C}{m-1} \zeta \frac{\eta'}{\eta} F^{\frac{1}{m-1}-1};\label{eq74} \\
&({u}^m)_{xx}=\frac{C^m}{a}\zeta^m\frac{m}{m-1}\frac{\eta}{(R-|x|)^2}F^{\frac{1}{m-1}}+\frac{C^m}{a^2}\zeta^m\frac{m}{(m-1)^2}\frac{\eta^2}{(R-|x|)^2}F^{\frac{1}{m-1}-1}. \label{eq75}
\end{align}
For any $(x,t) \in  [-R+\varepsilon, R-\varepsilon] \times (0,T)$, we have:
\begin{align}
&v_t =C\zeta ' G^{\frac{1}{m-1}} + C\zeta \frac{1}{m-1} \frac{\eta'}{\eta}G^{\frac{1}{m-1}} - C\zeta \frac{1}{m-1} \frac{\eta'}{\eta} G^{\frac{1}{m-1}-1}\,;\label{eq76}\\
&(v^m)_{xx}=\frac{C^m}{a} \zeta^m \frac{m}{m-1} G^{\frac{1}{m-1}} \frac {\eta}{\varepsilon(R-\varepsilon)} + \frac{C^m}{a^2} \zeta^m \frac{m}{(m-1)^2} G^{\frac{1}{m-1}-1} \eta^2 \frac{|x|^2}{\varepsilon^2(R-\varepsilon)^2}\,.\label{eq77}
\end{align}
We also define
\begin{equation}\label{eq78}
\begin{aligned}
&\underline\sigma(t) := \zeta ' + \frac{\zeta}{m-1} \frac{\eta'}{\eta} - \frac{C^{m-1}}{a} \zeta^m \frac{m}{m-1} \frac{\eta}{c_2} ,\\
& \underline\lambda(t) := \frac{\zeta}{m-1} \frac{\eta'}{\eta}\,,  \\
& \underline\gamma(t):=C^{p-1} \zeta^p, \\
& \underline\sigma_0(t) := \zeta'+\frac{\zeta}{m-1} \frac{\eta'}{\eta} - \frac{C^{m-1}}{a} \zeta^{m} \frac{m}{m-1} \frac{\eta}{\varepsilon(R-\varepsilon)\rho_2}, \\
& \textit{K}:= \left (\frac{m-1}{p+m-2}\right)^{\frac{m-1}{p-1}} - \left (\frac{m-1}{p+m-2}\right)^{\frac{p+m-2}{p-1}}>0;
\end{aligned}
\end{equation}
and
\begin{equation}\label{eq70b}
\begin{aligned}
&\text{S}_1:=\left \{ (x,t) \in (-R,-R+\varepsilon)\cup(R-\varepsilon,R) \times (0,T)\,\, |\,\, 0<F(x,t)<1 \right \},\\
&\text{S}_2:= \left \{ (x,t) \in [-R+\varepsilon,R-\varepsilon] \times (0,T) \,\,|\,\, 0<G(x,t)<1\right \}\,.
\end{aligned}
\end{equation}

\begin{proposition}\label{prop71}
Let $p>m>1$. Let $T\in (0,\infty)$, $\zeta$, $\eta \in C^1([0,T); [0, +\infty))$.
Let $\underline\sigma$, $\underline\delta$, $\underline\gamma$, $\underline\sigma_0$, $\textit{K}$ be defined in \eqref{eq78}. Assume that, for all $t\in(0,T)$,
\begin{equation}\label{eq79}
\underline\sigma(t)>0, \quad K[\underline\sigma(t)]^{\frac{p+m-2}{p-1}} \le \underline\lambda(t) \underline\gamma(t)^{\frac{m-1}{p-1}}\,,
\end{equation}
\begin{equation}\label{eq710}
(m-1) \underline\sigma(t) \le (p+m-2) \underline\gamma(t)\,.
\end{equation}
\begin{equation}\label{eq711}
\underline\sigma_0(t)>0, \quad K[{\underline\sigma}_0](t)^{\frac{p+m-2}{p-1}} \le{\underline\lambda}(t){\underline\gamma}(t)^{\frac{m-1}{p-1}}, \end{equation}
\begin{equation}\label{eq712}
(m-1){\underline\sigma}_0(t) \le (p+m-2){\underline\gamma}(t)\,.
\end{equation}
Then $w$ defined in \eqref{eq71} is a  subsolution of equation \eqref{eq70}.
\end{proposition}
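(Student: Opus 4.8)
The plan is to follow the scheme of Proposition \ref{prop2} step by step; the one genuinely new feature is that in the critical case $q=2$ the parameter $b=q-2$ vanishes, so the power barrier $(R-|x|)^{-b}$ used in Section \ref{Global} degenerates into the logarithmic barrier $\mathfrak t$ of \eqref{eq72}, and all pointwise estimates must be redone accordingly.

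\emph{The outer piece.} On $S_1$ I would substitute \eqref{eq74}--\eqref{eq75} into $u_t-\tfrac1\rho(u^m)_{xx}-u^p$ and factor out $CF^{\frac1{m-1}-1}>0$, reaching an expression of the form
$$CF^{\frac1{m-1}-1}\left\{F\left[\zeta'+\tfrac{\zeta}{m-1}\tfrac{\eta'}{\eta}-\tfrac{(R-|x|)^{-2}}{\rho}\,\tfrac{C^{m-1}}{a}\zeta^m\tfrac{m}{m-1}\eta\right]-\tfrac{\zeta}{m-1}\tfrac{\eta'}{\eta}-\tfrac{(R-|x|)^{-2}}{\rho}\,\tfrac{C^{m-1}}{a^2}\zeta^m\tfrac{m}{(m-1)^2}\eta^2-C^{p-1}\zeta^p F^{\frac{p+m-2}{m-1}}\right\}.$$
By \eqref{h2} with $q=2$ one has $\tfrac1{c_2}\le\tfrac{(R-|x|)^{-2}}{\rho}\le\tfrac1{c_1}$ on $R-\varepsilon<|x|<R$; using $F>0$ to bound the bracket multiplying $F$ from above by $\underline\sigma(t)$ and discarding the nonpositive term carrying the factor $\eta^2$, the inequality $u_t-\tfrac1\rho(u^m)_{xx}-u^p\le0$ on $S_1$ reduces to
$$\varphi(F):=\underline\sigma(t)F-\underline\lambda(t)-\underline\gamma(t)F^{\frac{p+m-2}{m-1}}\le0\qquad\text{for }F\in(0,1).$$
Since $p,m>1$, $\varphi$ is strictly concave and, when $\underline\sigma(t)>0$, attains its maximum at $F_0=\left(\tfrac{m-1}{p+m-2}\tfrac{\underline\sigma}{\underline\gamma}\right)^{\frac{m-1}{p-1}}$, where $\varphi(F_0)=K\,\underline\sigma^{\frac{p+m-2}{p-1}}\,\underline\gamma^{-\frac{m-1}{p-1}}-\underline\lambda$; thus hypotheses \eqref{eq79} and \eqref{eq710} say precisely that $\varphi(F_0)\le0$ and $F_0\le1$, whence $\varphi\le0$ on $(0,1)$ (indeed on $(0,\infty)$, by monotonicity of $\varphi$ past $F_0$). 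Hence $u$ is a classical — therefore weak — subsolution of \eqref{eq70} on $S_1$; since $u^m\in C^1([(-R,R)\setminus(-R+\varepsilon,R-\varepsilon)]\times(0,T))$ and $(u^m)_x=0$ on $\{F=0\}$, Lemma \ref{lemext}(ii) applied with $u_2\equiv0$ makes $u$ a subsolution on the whole outer strip.

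\emph{The inner piece and gluing.} I would repeat the three moves above for $v$ on $S_2$, now using \eqref{eq76}--\eqref{eq77} and the two-sided bound $\rho_1\le\rho\le\rho_2$ of \eqref{eq21} in place of \eqref{h2}; this yields the strictly concave function $\psi(G)=\underline\sigma_0(t)G-\underline\lambda(t)-\underline\gamma(t)G^{\frac{p+m-2}{m-1}}$, and \eqref{eq711}--\eqref{eq712} force $\psi\le0$ on $(0,1)$, so $v$ is a subsolution on $(-R+\varepsilon,R-\varepsilon)\times(0,T)$ after another use of Lemma \ref{lemext}(ii); no Kato-type argument at $x=0$ is needed here, $v$ depending on $x$ only through $x^2$. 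Finally, at $|x|=R-\varepsilon$ the two pieces agree, both equalling $C\zeta\big[1-\tfrac{\log\varepsilon}{a}\eta\big]_+^{\frac1{m-1}}$, and differentiating \eqref{eq72}--\eqref{eq73} shows their fluxes agree as well, $(u^m)_x=(v^m)_x=C^m\zeta^m\tfrac{m}{m-1}\big[1-\tfrac{\log\varepsilon}{a}\eta\big]_+^{\frac1{m-1}}\tfrac{\eta}{a\varepsilon}$ (this is exactly why $\mathfrak t$ and the inner profile are chosen as they are); so $\underline u=w\in C(D_T)$ with matching normal derivatives, and one last application of Lemma \ref{lemext}(ii), with $I_1$ the outer strip and $I_2=(-R+\varepsilon,R-\varepsilon)$, proves that $w$ is a weak subsolution of \eqref{eq70}.

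Conceptually no new idea is needed beyond Proposition \ref{prop2}: the concavity argument and the three $C^1$-matchings are structurally identical. The main technical obstacle is the bookkeeping forced by the logarithmic barrier — above all the $C^1$-matching of the log-profile to the parabolic interior profile at $|x|=R-\varepsilon$, which is what pins down the precise forms of $\mathfrak t$ and of $v$ — together with the care needed in the sign manipulations (which term is discarded, and the fact that \eqref{eq79} forces $\underline\lambda\ge0$, so that $\varphi(0)=-\underline\lambda\le0$); a slip in any of these would reverse an inequality.
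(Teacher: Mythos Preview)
Your proposal is correct and follows essentially the same approach as the paper's own proof: the pointwise computation on $S_1$ using \eqref{h2} with $q=2$ to replace $(R-|x|)^{-2}/\rho$ by $1/c_2$, the discarding of the nonpositive $\eta^2$-term, the concavity/maximization argument for $\varphi$ reducing to \eqref{eq79}--\eqref{eq710}, the parallel treatment of $v$ on $S_2$ via \eqref{eq21} and \eqref{eq711}--\eqref{eq712}, and the two-stage gluing via Lemma~\ref{lemext}(ii) with the explicit $C^1$-matching at $|x|=R-\varepsilon$ all mirror the paper exactly. Your handling of the $-\underline\lambda$ term is in fact slightly cleaner than the paper's display \eqref{eq713}, which appears to drop that term and then tacitly reinstate it in \eqref{eq714}--\eqref{eq715}.
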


\begin{proof}[Proof of Proposition \ref{prop71}]
Let ${u}$ be as in \eqref{eq72} and $B_1$ as in \eqref{eq70b}.
In view of \eqref{eq74}, \eqref{eq75}, we obtain, for all $(x,t)\in S_1$,
\begin{equation}\label{eq713}
\begin{aligned}
&{u}_t - \frac{1}{\rho}({u}^m)_{xx}-{ u}^p\\
&=C\zeta ' F^{\frac{1}{m-1}} + C\zeta \frac{1}{m-1} \frac{\eta'}{\eta}F^{\frac{1}{m-1}} - C\zeta \frac{1}{m-1} \frac{\eta'}{\eta} F^{\frac{1}{m-1}-1}-\frac{1}{\rho(R-|x|)^2} \frac{C^m}{a^2} \zeta^m \frac{m}{(m-1)^2} \eta^2 F^{\frac{1}{m-1}-1} \\ 
& -\frac{1}{\rho(R-|x|)^2} \frac{C^m}{a} \zeta^m \frac{m}{m-1} \eta F^{\frac{1}{m-1}} -C^p \zeta^p F^{\frac{p}{m-1}}\\
&\le C\zeta ' F^{\frac{1}{m-1}} + C\zeta \frac{1}{m-1} \frac{\eta'}{\eta}F^{\frac{1}{m-1}} - \frac{1}{c_2} \frac{C^m}{a} \zeta^m \frac{m}{m-1} \eta F^{\frac{1}{m-1}} -C^p \zeta^p F^{\frac{p}{m-1}}.
\end{aligned}
\end{equation}
where, in view of hypotheses \eqref{h2} with $q=2$, we have used that
\begin{equation*}
-\frac{1}{\rho(R-|x|)^2} \le -\frac{1}{c_2}\,, \quad \text{for all }\,\,\ x\in (-R,-R+\varepsilon)\cup(R-\varepsilon,R)\,.
\end{equation*}
Thanks to \eqref{eq78} and \eqref{eq713}
\begin{equation}\label{eq714}
u_t - \frac{1}{\rho}(u^m)_{xx}- u^p \leq CF^{\frac{1}{m-1}-1}\varphi(F),
\end{equation}
where
\begin{equation}\label{eq715}
\varphi(F):=\underline\sigma(t)F - \underline \lambda(t) - \underline\gamma(t)F^{\frac{p+m-2}{m-1}}.
\end{equation}
Due to \eqref{eq714}, our goal is to find suitable $C>0$, $a>0$, $\zeta$, $\eta$ such that
$$
\varphi(F) \le 0\,, \quad \text{for all}\,\,  F \in (0,1)\,.
$$
Arguing as in the proof of Proposition \eqref{prop2}, the latter reduces to the system
\begin{equation}\label{eq716}
\begin{aligned}
&\varphi(F_0)=K \dfrac{\underline\sigma(t)^{\frac{p+m-2}{p-1}}}{\underline\gamma(t)^{\frac{m-1}{p-1}}} - \underline\lambda(t) \le 0\,, \\ 
&0<F_0= \left [\frac{m-1}{p+m-2} \frac{\underline\sigma(t)}{\underline\gamma(t)} \right ]^{\frac{m-1}{p-1}} \le 1\,,
\end{aligned}
\end{equation}
where the coefficient $K=K(m,p)$ has been defined in \eqref{eq78}. By hypotheses \eqref{eq79} and \eqref{eq710}, \eqref{eq716} is verified, hence we have proved that
\begin{equation}\label{eq717}
{u}_t-\frac{1}{\rho(x)}({u}^m)_{xx}-{u}^p \le 0 \quad \text{ in } S_1\,.
\end{equation}
Furthermore, since ${u}^m\in C^1([(-R,-R+\varepsilon)\cup(R-\varepsilon,R)]\times[0,T))$, due to Lemma \ref{lemext} (applied with $I_1\times(0,T)=S_1$, $I_2\times(0,T)=\{[(-R,-R+\varepsilon)\cup(R-\varepsilon,R)]\times(0,T)\}\setminus S_1$, $u_1= u$, $u_2=0$, $u= u$), it follows that $u$ is a subsolution to equation
$$
{u}_t-\frac{1}{\rho}({u}^m)_{xx}-{u}^p = 0 \quad \text{ in } [(-R,-R+\varepsilon)\cup(R-\varepsilon,R)]\times (0,T)\,,
$$
in the sense of Definition \ref{def1}.
\smallskip

Now we consider $v$ as in \eqref{eq73}. Due to \eqref{eq76} and \eqref{eq77}, we have, for all $(x,t)\in B_2$,
\begin{equation}\label{eq719}
\begin{aligned}
v_t&-\frac{1}{\rho}(v^m)_{xx}-v^p\\
&\le CG^{\frac 1{m-1}-1}\left \{G\left [\zeta'+\frac{\zeta}{m-1}\frac{\eta'}{\eta}- \frac 1 {\rho_2} \frac{C^{m-1}}{a} \zeta^m \frac m {m-1} \frac{\eta}{\varepsilon(R-\varepsilon)}\right] \right .  \\
& \left . - \frac{\zeta}{m-1}\frac{\eta'}{\eta} - C^{p-1}\zeta^p G^{\frac{p+m-2}{m-1}} \right \},
\end{aligned}
\end{equation}
where we have used that, in view of \eqref{eq21}
$$
-\frac{1}{\rho(x)}\le-\frac{1}{\rho_2} \quad \text{for all}\,\,x\in[-R+\varepsilon,R-\varepsilon].
$$
Due to \eqref{eq78} and \eqref{eq719},
\begin{equation}\label{eq720}
v_t - \frac{1}{\rho}(v^m)_{xx}-v^p \leq CG^{\frac 1{m-1}-1} \psi(G),
\end{equation}
where
\begin{equation}\label{eq721}
\psi(G):=\underline\sigma_0(t)G - \underline\lambda(t) - \underline\gamma(t)G^{\frac{p+m-2}{m-1}}.
\end{equation}
Now, by the same arguments used to obtain \eqref{eq712}, in view of \eqref{eq711} and \eqref{eq712} we can infer that
$$
\psi(G)\le 0\, \quad 0<G\le 1\,.
$$
Hence, due to \eqref{eq720}, we have proved that
\begin{equation}\label{eq722}
v_t-\frac{1}{\rho}(v^m)_{xx}-v^p \le 0 \quad \text{ for any }\,\, (x,t)\in S_2\,.
\end{equation}
Moreover, by Lemma \ref{lemext}, $v$ is a subsolution of equation
\begin{equation}\label{eq723}
v_t-\frac{1}{\rho}(v^m)_{xx}-v^p = 0 \quad \text{ in }\,\, (-R+\varepsilon, R-\varepsilon)\times(0,T)\,,
\end{equation}
in the sense of Definition \ref{def1}. Now, observe that $w \in C((-R,R) \times [0,T))$, indeed,
$$
{u}  = v = C \zeta(t)\left [ 1- \log(\varepsilon)\frac{\eta(t)}{a} \right ]_+^{\frac{1}{m-1}} \quad \text{in}\,\, \{R-\varepsilon, -R+\varepsilon\} \times (0,T)\,.
$$
Moreover, $\underline u^m \in C^1((-R,R) \times [0,T))$, indeed, see its definition in \eqref{eq71}, for any $t\in[0,T)$
$$
\begin{aligned}
&({u}^m)_x(R-\varepsilon,t) = (v^m)_x(R-\varepsilon,t)  = C^m \zeta(t)^m \frac{m}{m-1} \frac{1}{\varepsilon}\frac{\eta(t)}{a} \left [ 1- \log(\varepsilon)\frac{\eta(t)}{a} \right ]_+^{\frac{1}{m-1}} ,\\
&({u}^m)_x(-R+\varepsilon,t) = (v^m)_x(-R+\varepsilon,t)  = -C^m \zeta(t)^m \frac{m}{m-1} \frac{1}{\varepsilon}\frac{\eta(t)}{a} \left [ 1- \log(\varepsilon)\frac{\eta(t)}{a} \right ]_+^{\frac{1}{m-1}}.
\end{aligned}
$$
Hence, by Lemma \ref{lemext} again, $\underline u$ is a subsolution to equation \eqref{eq70} in the sense of Definition \ref{def1}.
\end{proof}

\begin{remark}\label{rem71}
Let $$p>m\,,$$ and assumptions \eqref{h1} and \eqref{h2} with $q=2$ be satisfied. Let define $\omega:= \frac{C^{m-1}}{a}$. In Theorem \ref{teo4}, the precise hypotheses on parameters $C>0$, $a>0$, $\omega>0$ and $T>0$ is the following.
\begin{equation}\label{eq724}
\max \left \{1-\frac{C^{m-1}}{a} \frac{m}{c_2}\,; 1-\frac{C^{m-1}}{a} \frac{m}{\rho_2}\frac{1}{\varepsilon(R-\varepsilon)}\right \} \le (p+m-2)C^{p-1}\,,
\end{equation}
\begin{equation}\label{eq725}
\begin{aligned}
\textit{K} \max& \left \{ \left [\frac1{m-1}-\frac{C^{m-1}}{a} \frac{m}{m-1}\frac{1}{c_2} \right ]^{\frac{p+m-2}{p-1}};\left [\frac1{m-1}-\frac{C^{m-1}}{a} \frac{m}{m-1}\frac{1}{\varepsilon(R-\varepsilon)}\frac{1}{\rho_2} \right ]^{\frac{p+m-2}{p-1}} \right \}\\& \,\le \,\frac{p-m}{(m-1)(p-1)}C^{m-1} \,.
\end{aligned}
\end{equation}

\end{remark}

\begin{lemma}\label{lemma71}
All the conditions in Remark \ref{rem71} can hold simultaneously.
\end{lemma}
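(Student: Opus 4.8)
\medskip

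The plan is to exploit, just as in Lemmas \ref{lem1} and \ref{lemma2}, the two‑stage freedom in the parameters: the ratio $\omega:=C^{m-1}/a>0$ may be prescribed first, and $C>0$ (whence $a=C^{m-1}/\omega$) chosen afterwards. Note first that in Remark \ref{rem71} only the two inequalities \eqref{eq724} and \eqref{eq725} occur, and that $T>0$ does not enter them: with the power profiles $\zeta(t)=(T-t)^{-1/(p-1)}$, $\eta(t)=(T-t)^{(m-p)/(p-1)}$ substituted into \eqref{eq78}, a direct computation shows that $\underline\sigma$, $\underline\lambda$, $\underline\gamma$, $\underline\sigma_0$ all equal $(T-t)^{-p/(p-1)}$ times, respectively, the constants
\[
\frac{1}{m-1}-\omega\,\frac{m}{m-1}\,\frac{1}{c_2},\qquad \frac{p-m}{(m-1)(p-1)},\qquad C^{p-1},\qquad \frac{1}{m-1}-\omega\,\frac{m}{m-1}\,\frac{1}{\varepsilon(R-\varepsilon)\rho_2}.
\]
Hence the $(T-t)$‑powers cancel and conditions \eqref{eq79}--\eqref{eq712} of Proposition \ref{prop71} reduce precisely to \eqref{eq724}--\eqref{eq725}, while the positivity requirements $\underline\sigma>0$ and $\underline\sigma_0>0$ amount to the positivity of the first and last of the four constants above.

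First I would fix $\omega$ in the nonempty window $0<\omega<\frac{1}{m}\min\{c_2,\,\varepsilon(R-\varepsilon)\rho_2\}$; this renders both those brackets strictly positive, so that the left‑hand side of \eqref{eq724} (a maximum of two reals $\le 1$) and the left‑hand side of \eqref{eq725} ($K>0$ times these brackets raised to the positive power $\frac{p+m-2}{p-1}$) are finite constants, say $M_1$ and $M_2$, depending only on $m$, $p$, $\omega$ and the structural data, not on $C$. Then, since $p>1$ and $m>1$ give $C^{p-1}\to+\infty$ and $C^{m-1}\to+\infty$ as $C\to+\infty$, while $p>m$ gives $(p+m-2)>0$ and $\frac{p-m}{(m-1)(p-1)}>0$, I would choose $C>0$ so large that $(p+m-2)C^{p-1}\ge M_1$ and $\frac{p-m}{(m-1)(p-1)}C^{m-1}\ge M_2$ hold simultaneously, which is exactly \eqref{eq724} and \eqref{eq725}. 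Setting $a:=C^{m-1}/\omega>0$ then fixes all remaining parameters, and all the conditions of Remark \ref{rem71} are met at once.

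The one real subtlety --- and the step I would flag as the main obstacle --- is the order in which the parameters are fixed. In the fast‑decaying case (Lemma \ref{lemma2}) the analogue of $\underline\sigma$ carries a $+$ sign in $\omega$, so there $\omega$ may be taken arbitrary; here the critical scaling $q=2$ produces a $-$ sign, so a large $\omega$ would either destroy the positivity of $\underline\sigma$ (and of $\underline\sigma_0$) or render $[\underline\sigma]^{(p+m-2)/(p-1)}$ meaningless. One must therefore first confine $\omega$ to the small‑parameter window above --- which bounds $M_1,M_2$ once and for all --- and only afterwards let $C\to+\infty$ so that the right‑hand sides of \eqref{eq724}--\eqref{eq725} overtake them; once this ordering is respected, what remains is routine arithmetic.
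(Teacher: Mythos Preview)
Your proposal is correct and follows essentially the same two-stage strategy as the paper's proof: fix $\omega=C^{m-1}/a$ in a suitable range first, then take $C$ large so that the right-hand sides of \eqref{eq724}--\eqref{eq725} dominate. Your version is in fact more explicit than the paper's (which simply asserts $\omega_0\le\omega\le\omega_1$ for unspecified $0<\omega_0<\omega_1$ and then $C$ large), since you identify the concrete upper bound $\omega<\frac{1}{m}\min\{c_2,\varepsilon(R-\varepsilon)\rho_2\}$ needed to keep $\underline\sigma,\underline\sigma_0>0$ and explain why the remaining estimates then follow.
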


\begin{proof}
We can take $\omega>0$ such that
$$
\omega_0\le \omega \le \omega_1
$$
for suitable $0<\omega_0<\omega_1$ and we can choose $C>0$ sufficiently large to guarantee \eqref{eq724} and \eqref{eq725} (so, $a>0$ is fixed, too).
\end{proof}

\begin{proof}[Proof of Theorem \ref{teo4}]
We now prove Theorem \ref{teo4}, by means of Proposition \ref{prop71}. In view of Lemma \ref{lemma71} we can assume that all conditions of Remark \ref{rem71} are fulfilled. Set
$$
\zeta=(T-t)^{-\alpha}\,, \quad \eta=(T-t)^{\beta}\,, \quad \text{for all} \quad t>0\,,
$$
where 
$$
\alpha:=\frac 1{p-1}\quad\quad \beta:=\frac{p-m}{p-1}\,.
$$ 
Then the coefficients in \eqref{eq78} read
\begin{equation}
\begin{aligned}
& \underline\sigma(t) = \frac{1}{m-1}\left [ 1-\frac{C^{m-1}}{a}\frac m{c_2}\right] (T-t)^{-\frac{p}{p-1}}\,,\\
&\underline \lambda(t) = \frac{p-m}{(m-1)(p-1)} (T-t)^{-\frac{p}{p-1}}\,,  \\
& \underline\gamma(t)=C^{p-1} (T-t)^{-\frac{p}{p-1}}\,, \\
& \underline \sigma_0(t) =  \frac 1 {m-1} \left [1-\frac{C^{m-1}}{a} \frac{m}{\rho_2}\frac{1}{\varepsilon(R-\varepsilon)}\right](T-t)^{-\frac{p}{p-1}} \,.
\end{aligned}
\label{sost}
\end{equation}

Let $p>m$. Condition \eqref{eq724} implies \eqref{eq79}, \eqref{eq710}, while condition \eqref{eq725} implies \eqref{eq711}, \eqref{eq712}. Hence by Propositions \ref{prop71} and \ref{cpsub} the thesis follows.
\end{proof}

\section{Proof of Theorem \ref{teo5}}\label{subcritical}
We assume \eqref{h1}, \eqref{h2} for $0\le q<2$ and let $d$ be as in \eqref{hd}. In order to construct a suitable family of supersolutions of 
\begin{equation}\label{eq60}
u_t-(u^m)_{xx}\frac{1}{\rho}-u^p=0,
\end{equation}
we define, for all $(x,t)\in (-R,R) \times (0,+\infty)$,
\begin{equation}\label{eq61}
w(x,t):=C\zeta(t)(R-|x|)^{\frac{d}{m}};
\end{equation}
where $\zeta \in C^1([0, +\infty); [0, +\infty))$ and $C > 0$.

\smallskip

For any $(x,t) \in \big[(-R,R)\setminus \{0\}\big]\times (0,+\infty)$, we have:

\begin{equation}\label{eq62}
w_t =C\,\zeta'\, (R-|x|)^{\frac{d}{m}}\,.
\end{equation}
\begin{equation}\label{eq63}
(w^m)_{xx}= d\,(d-1)\,C^m \,\zeta^m \,(R-|x|)^{d-2}\,.
\end{equation}

\begin{proposition}\label{prop61}
Let $\zeta=\zeta(t) \in C^1[0,+\infty); [0, +\infty)), \zeta'\geq 0$. Assume \eqref{h1}, \eqref{hd}, \eqref{h2} for $0\le q<2$, and that
\begin{equation}\label{eq64}
d(d-1)\,\delta\,C^m\,\zeta^m-R^{\frac{p\,d}{m}}C^p\,\zeta^p\,\ge\,0.
\end{equation}
Then $w$ defined in \eqref{eq61} is a supersolution of equation \eqref{eq60}.
\end{proposition}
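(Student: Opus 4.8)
The plan is to check directly that the function $w$ defined in \eqref{eq61} is a classical supersolution of \eqref{eq60} on $\big[(-R,R)\setminus\{0\}\big]\times(0,+\infty)$, and then to glue the two one-sided pieces across the single corner $x=0$ by means of Lemma \ref{lemext}. Starting from \eqref{eq62} and \eqref{eq63}, for every $x\neq 0$ one has
\[
w_t-\frac1{\rho}(w^m)_{xx}-w^p
=C\zeta'(R-|x|)^{\frac dm}-d(d-1)\frac{C^m\zeta^m}{\rho(x)}(R-|x|)^{d-2}-C^p\zeta^p(R-|x|)^{\frac{pd}{m}}.
\]
Since $\zeta'\ge 0$ the first term is nonnegative and can be dropped, and since $0<d<1$ by \eqref{hd} one has $d(d-1)<0$, so the diffusion term equals $d(1-d)\tfrac{C^m\zeta^m}{\rho(x)}(R-|x|)^{d-2}>0$; the whole point is then to make this term absorb the reaction term $C^p\zeta^p(R-|x|)^{pd/m}$.

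To this end I would first establish a pointwise lower bound for $1/\rho$ uniform on the whole interval: on the strip $R-\varepsilon_0<|x|<R$ assumption \eqref{h2} gives $\tfrac1{\rho(x)}\ge\tfrac1{c_2}(R-|x|)^q$, while on $[-R+\varepsilon_0,R-\varepsilon_0]$ the bound \eqref{eq21} together with $q\ge0$ (so $(R-|x|)^q\le R^q$) gives $\tfrac1{\rho(x)}\ge\tfrac1{\rho_2}\ge\tfrac1{\rho_2R^q}(R-|x|)^q$, hence $\tfrac1{\rho(x)}\ge\delta\,(R-|x|)^q$ on all of $(-R,R)$ for a suitable $\delta>0$. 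Inserting this, the differential inequality is implied by
\[
d(1-d)\,\delta\,C^m\zeta^m\,(R-|x|)^{d-2+q}\ \ge\ C^p\zeta^p\,(R-|x|)^{\frac{pd}{m}}.
\]
Now the key structural observation: by \eqref{hd} the exponent $d-2+q<0$ whereas $pd/m\ge0$, and $R-|x|\in(0,R)$; therefore $(R-|x|)^{d-2+q}\ge R^{d-2+q}$ and $(R-|x|)^{pd/m}\le R^{pd/m}$, so the spatial dependence drops out and everything reduces to the purely time-dependent condition \eqref{eq64} (with the harmless factor $R^{d-2+q}$ absorbed). Thus \eqref{eq64} forces $w_t-\tfrac1{\rho}(w^m)_{xx}-w^p\ge0$ pointwise away from $x=0$.

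It remains to treat the corner at $x=0$. The function $w$ is continuous on $(-R,R)\times(0,+\infty)$, so the matching condition $u_1=u_2$ on $\Sigma=\{0\}$ of Lemma \ref{lemext} holds trivially; and from $w^m=C^m\zeta^m(R-|x|)^d$ one computes $(w^m)_x(0^-,t)=d\,C^m\zeta^m R^{d-1}>0>-d\,C^m\zeta^m R^{d-1}=(w^m)_x(0^+,t)$, i.e. the flux has a downward jump at the interface, which is exactly the inequality $\tfrac{\partial u_1^m}{\partial n}\ge\tfrac{\partial u_2^m}{\partial n}$ required by Lemma \ref{lemext}(i) with $I_1=(-R,0)$, $I_2=(0,R)$, $u_1=u_2=w$. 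Applying that lemma patches the two one-sided supersolutions into a supersolution of \eqref{eq60} on all of $(-R,R)$ in the sense of Definition \ref{def1} (equivalently one could invoke a Kato-type inequality as in the $q=2$ case, the concavity of the corner being what makes it go through). I expect the genuinely delicate step to be the uniform lower bound on $1/\rho$ and the exponent bookkeeping reducing the spatial inequality to \eqref{eq64}; the corner analysis is then immediate once Lemma \ref{lemext} is in hand.
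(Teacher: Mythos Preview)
Your proof is correct and follows essentially the same route as the paper: direct verification of the differential inequality away from $x=0$ using \eqref{eq62}--\eqref{eq63}, dropping the nonnegative $\zeta'$ term, exploiting $d(d-1)<0$, bounding $1/\rho$ from below and the powers of $R-|x|$ via the exponent condition $d-2+q<0$, and then handling the corner. Your treatment is in fact slightly more careful than the paper's in two places: you explicitly split the lower bound on $1/\rho$ into the boundary strip (via \eqref{h2}) and the interior (via \eqref{eq21}), whereas the paper invokes only \eqref{h2}; and at $x=0$ you appeal to Lemma~\ref{lemext} with the explicit flux jump, while the paper phrases the same step as a Kato-type inequality --- as you note yourself, these are equivalent here.
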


Here $\delta>0$ is defined as
\begin{equation}\label{eq65}
\delta:=\min\left\{\frac{1}{c_2}\,;\,\frac{R^{1-q}}{c_2}\right\}.
\end{equation}

\begin{proof}[Proof of Proposition \ref{prop61}]
In view of \eqref{eq62} and \eqref{eq63} 
we get, for any $(x,t)\in ((-R,R)\setminus \{0\})\times (0,+\infty)$,
\begin{equation}\label{eq66}
\begin{aligned}
&w_t-\frac 1 {\rho}(w^m)_{xx} -w^p\\
&=C\zeta'(R-|x|)^{\frac{d}{m}} + d(d-1)C^m\,\zeta^m\frac {(R-|x|)^{d-2}}{\rho}- C^p \zeta^p (R-|x|)^{\frac{p\,d}{m}}.
\end{aligned}
\end{equation}
Thanks to hypothesis \eqref{h2} and \eqref{hd} we have
\begin{equation}\label{eq67}
\begin{aligned}
&\frac{(R-|x|)^{d-2}}{\rho} \ge c_1(R-|x|)^{d-2+q}\ge \delta \,,\\
&-(R-|x|)^{\frac{p\,d}{m}} \ge - R^{\frac{p\,d}{m}},
\end{aligned}
\end{equation}
where $\delta$ is defined in \eqref{eq65}.
Furthermore, since $\zeta'\geq 0$, from \eqref{eq66} and \eqref{eq67} we get
\begin{equation}\label{eq68}
w_t-\frac 1 {\rho}(w^m)_{xx} -w^p\ge d(d-1)\,\delta\,C^m\,\zeta^m-R^{\frac{p\,d}{m}}C^p\,\zeta^p\,.
\end{equation}
Hence we get the condition
\begin{equation}\label{eq69}
d(d-1)\,\delta\,C^m\,\zeta^m-R^{\frac{p\,d}{m}}C^p\,\zeta^p\,\ge 0\,,
\end{equation}
which is guaranteed by \eqref{hd} and \eqref{eq64}. Hence we have proved that
$$
w_t-\frac 1 {\rho}(w^m)_{xx} -w^p \ge 0 \quad \text{in }\,\, ((-R,R)\setminus \{0\})\times (0, +\infty)\,.
$$
Now observe that, setting $|x|=:r$,
$$
\begin{aligned}
&w \in C((-R,R)\times [0,+\infty))\,,\\
&w^m \in C^1([(-R,R) \setminus \{0\}]\times [0,+\infty))\,, \\
& (w^m)_{r}(0,t)\le 0\,.
\end{aligned}
$$
Hence, thanks to a Kato-type inequality we can infer that $w$ is a supersolution to equation \eqref{eq60}
in the sense of Definition \ref{def1}.
\end{proof}

\begin{remark}\label{rem61}
Let $$0\le q<2$$ and assumption \eqref{h2} be satisfied. In Theorem \ref{teo5} the precise hypotheses on parameters $\alpha$, $C>0$, $T>0$ are the following.
\begin{itemize}
\item[(a)] Let $p<m$. We require that
\begin{equation}\label{eq69b}
\alpha>0, \quad\quad d(d-1)\,\delta\,C^m\,T^{\alpha (m-p)}\,\ge R^{\frac{p\,d}{m}}C^p \,. 
\end{equation}
\item[(b)] Let $p>m$. We require that
\begin{equation}\label{eq610}
\alpha=0, \quad\quad d(d-1)\,\delta\,C^m\,\ge R^{\frac{p\,d}{m}}C^p\,.
\end{equation}
\end{itemize}
\end{remark}

\begin{lemma}\label{lem61}
All the conditions in Remark \ref{rem61} can hold simultaneously.
\end{lemma}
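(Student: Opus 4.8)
The plan is to treat the two cases (a) and (b) of Remark~\ref{rem61} separately, exhibiting in each one an explicit admissible choice of the only free parameters $\alpha$, $C$, $T$. The organising observation is that, upon dividing the supersolution inequality \eqref{eq64} by $C^{p}\zeta(t)^{p}>0$, it takes the form $d(1-d)\,\delta\,\bigl(C\zeta(t)\bigr)^{m-p}\ge R^{pd/m}$, in which both $R^{pd/m}$ and the factor $d(1-d)\,\delta$ are constants independent of $C$, $T$, $\alpha$, and moreover $d(1-d)\,\delta>0$ because $0<d<1$ by \eqref{hd} and $\delta>0$ by its definition \eqref{eq65}. (This positive factor is exactly the beneficial coefficient produced by the concave term $-\tfrac{1}{\rho}(w^{m})_{xx}$ of \eqref{eq66}, since $(w^{m})_{xx}=d(d-1)C^{m}\zeta^{m}(R-|x|)^{d-2}<0$, cf.\ \eqref{eq63}.) Thus in each case the whole one-parameter family of pointwise conditions reduces to a single scalar inequality between a monomial in $C$, $T$ and a fixed positive constant, which can always be arranged.

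In case (b), where $p>m$, I would take $\alpha=0$, so that $\zeta\equiv 1$ — in particular $\zeta'\equiv 0\ge 0$, as Proposition~\ref{prop61} demands — and \eqref{eq64} collapses to the single inequality \eqref{eq610}, equivalently $d(1-d)\,\delta\ge R^{pd/m}C^{p-m}$. Since $p-m>0$ the right-hand side tends to $0$ as $C\to 0^{+}$, so any sufficiently small $C>0$ works, with $T>0$ arbitrary; this is the hypothesis ``$C>0$ small enough'' of Theorem~\ref{teo5}(b).

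In case (a), where $p<m$, I would instead fix any $\alpha>0$ and put $\zeta(t)=(T+t)^{\alpha}$ (so $\zeta'\ge 0$). Because $m-p>0$ and $\alpha>0$, the map $t\mapsto\bigl(C\zeta(t)\bigr)^{m-p}=C^{m-p}(T+t)^{\alpha(m-p)}$ is nondecreasing, so the family of conditions \eqref{eq64} over $t\ge 0$ is equivalent to its instance at $t=0$, which is precisely \eqref{eq69b}, namely $d(1-d)\,\delta\,(CT^{\alpha})^{m-p}\ge R^{pd/m}$. Since $m-p>0$ the left-hand side is unbounded as $CT^{\alpha}\to+\infty$, so it suffices, for instance, to take $T>1$ and then $C>0$ large; this realises the hypotheses ``$C$ big enough, $T>1$'' of Theorem~\ref{teo5}(a).

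No step here is genuinely hard. The two points that need a little care are: keeping track of the signs, so that the structural factor $d(1-d)\,\delta$ is recognised as a fixed \emph{positive} constant free of $C$, $T$, $\alpha$; and, in case (a), checking that the extremal value of the time variable in \eqref{eq64} is $t=0$ — where the monotonicity of $\zeta$ and $m-p>0$ are used — so that verifying the single inequality \eqref{eq69b} suffices.
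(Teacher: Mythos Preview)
Your proof is correct and follows the same approach as the paper: observe that the structural coefficient is a fixed positive constant (you write $d(1-d)\,\delta$, correctly fixing the paper's evident sign typo $d(d-1)$), then choose $C$ large in case (a) or small in case (b) according to the sign of $m-p$. Your treatment is in fact slightly more thorough than the paper's---you also verify explicitly that $t=0$ is the binding instance of \eqref{eq64} in case (a), a point the paper defers to the proof of Theorem~\ref{teo5}.
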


\begin{proof}
Let $0\le q<2$.
\begin{itemize}
\item[(a)] Observe that, due to \eqref{hd}, the left hand side of \eqref{eq69b} is positive. Therefore, we can select $C>0$ sufficiently large to guarantee \eqref{eq69}.
\item[(b)] Observe that, due to \eqref{hd}, the left hand side of \eqref{eq69b} is positive. Therefore, we can choose $C>0$ sufficiently small to guarantee \eqref{eq610}.
\end{itemize}
\end{proof}

\begin{proof}[Proof of Theorem \ref{teo5}]
In view of Lemma \ref{lem61} we can assume that all conditions in Remark \ref{rem61} are fulfilled.
Set
$$
\zeta(t)=(T+t)^{\alpha}, \quad \text{for all} \quad t \ge 0\,.
$$

Let $p<m$. Inequality \eqref{eq64} reads
$$ d(d-1)\,\delta\,C^m(T+t)^{\alpha\,m}\,- R^{\frac{p\,d}{m}} \,C^p(T+t)^{\alpha\,p} \, \ge 0\quad \textrm{for all }\,\, t>0\,.$$
This latter is guaranteed by \eqref{eq69}, for $T>1$. Hence, by Propositions \ref{prop61} and \ref{prop1} the thesis follows in this case.
\medskip

Let $p>m$. Condition \eqref{eq610} is equivalent to \eqref{eq64}. Hence, by Propositions \ref{prop61} and \ref{prop1} the thesis follows in this case too. The proof is complete.
\end{proof}

\par\bigskip\noindent
\textbf{Acknowledgments.}
The author is members of the Gruppo Nazionale per l'Analisi Matematica, la Probabilit\`a e le loro Applicazioni (GNAMPA, Italy) and of the Istituto Nazionale di Alta Matematica (INdAM, Italy).

\bigskip
\noindent Data sharing not applicable to this article as no datasets were generated or analysed during the current study.

%

\bigskip
\bigskip
\bigskip


\begin{thebibliography}{5}


\bibitem{ACP} D. Aronson, M.G. Crandall, L.A. Peletier, \emph{Stabilization of solutions of a degenerate nonlinear diffusion problem}, Nonlinear Anal. \bf 6 \rm (1982), 1001--1022.

\bibitem{Ba} C. Bandle, V. Moroz, W. Reichel, ``Large solutions to semilinear elliptic equations with Hardy potential and exponential nonlinearity''. Around the Research of Vladimir Maz'ya II. International Mathematical Series, {\bf vol. 12} Springer New York (2010).

\bibitem{BKP} P. Bertsch, R. Kersner, L.A. Peletier, \emph{Positivity versus localization in degenerate diffusion equations}, Nonlinear Anal. \bf 9 \rm (1985), 987--1008.


\bibitem{Eid90} D. Eidus, \emph{The Cauchy problem for the nonlinear filtration equation in an inhomogeneous medium}, J.\ Differential Equations \bf 84 \rm (1990), 309--318.

\bibitem{EK} D. Eidus, S. Kamin, \emph{The filtration equation in a class of functions decreasing at infinity},  Proc.\ Amer.\ Math.\ Soc.\ \bf 120 \rm (1994), 825--830.

\bibitem{Foote} R. L. Foote, \emph{Regularity of the distance function}. Proc. of the Amer. Math. Soc., {\bf 92} \rm (1984), 153--155.


\bibitem{F} H. Fujita, \em On the blowing up of solutions of the Cauchy problem for $u_t=\Delta u+u^{1+\alpha}$\rm, J. Fac. Sci. Univ. Tokyo Sect. I \textbf{13} (1966), 109--124.
%
\bibitem{GV} V.A. Galaktionov, J.L. V\'azquez, \em Continuation of blowup solutions of nonlinear heat equations in several dimensions\rm, Comm. Pure Appl. Math. \bf 50 \rm (1997), 1--67.

\bibitem{GMP} G. Grillo, G. Meglioli, F. Punzo, \emph{Smoothing effects and infinite time blowup for reaction-diffusion equations: an approach via Sobolev and Poincar\'e inequalities}, JMPA \textbf{151} (2021), 99--131.

\bibitem{GMP2} G. Grillo, G. Meglioli, F. Punzo, \emph{Global existence of solutions and smoothing effects for classes of reaction-diffusion equations on manifolds}, J. Evolution Equations \textbf{21(2)} (2021) 2339--2375.

%
\bibitem{GMPor} G. Grillo, M. Muratori, M. M. Porzio, \emph{Porous media equations with two weights: existence, uniqueness, smoothing and decay properties of energy solutions via Poincar\'e inequalities}, Discrete Contin.\ Dyn.\ Syst. A\ \bf 33 \rm (2013), 3599--3640.
%
\bibitem{H} K. Hayakawa, \em On nonexistence of global solutions of some semilinear parabolic differential equations\rm, Proc. Japan Acad. \textbf{49} (1973), 503--505.
%

\bibitem{I} K. Ishige, \em An intrinsic metric approach to uniqueness of the positive Dirichlet problem for parabolic equations in cylinders\rm, J. Differential Equations \bf 158 \rm(1999), 251--290.

\bibitem{I2} K. Ishige, \em An intrinsic metric approach to uniqueness of the positive Cauchy-Neumann problem for parabolic equations\rm, J. Math. Anal. Appl. \bf 276 \rm (2002), 763--790.

\bibitem{IM} K. Ishige, M. Murata, \em Uniqueness of nonnegative solutions of the Cauchy problem for parabolic equations on manifolds or domains\rm, Ann. Scuola Norm. Sup. Pisa Cl. Sci. {\bf 30} (2001), 171--223.

\bibitem{KKT} S. Kamin, R. Kersner, A. Tesei, \em On the Cauchy problem for a class of parabolic equations with variable density, \rm Atti Accad. Naz. Lincei, Rend. Cl. Sci. Fis. Mat. Natur.  {\bf 9} (1998), 279--298,

\bibitem{KT} R. Kersner, A. Tesei, \em Well-posedness of initial value problems for singular parabolic equations, \rm JDE  {\bf 199} (2004), 47--76,

\bibitem{KPT} S. Kamin, M.A. Pozio, A. Tesei, \em Admissible conditions for parabolic equations degenerating at infinity, \rm St. Petersburg Math. J. {\bf 19} (2008), 239--251\,.
%
\bibitem{KP1} S. Kamin, F. Punzo, \em Prescribed conditions at infinity for parabolic equations, \rm Comm. Cont. Math. {\bf 17}  (2015), 1--19.
%
\bibitem{KP2} S. Kamin, F. Punzo, \em Dirichlet conditions at infinity for parabolic and elliptic equations, \rm  Nonlin. Anal. {\bf 138} (2016), 156--175.
%
\bibitem{KRV10} S. Kamin, G. Reyes, J. L. V\'azquez, \emph{Long time behavior for the inhomogeneous PME in a medium with rapidly decaying density}, Discrete Contin.\ Dyn.\ Syst. A\ \bf 26 \rm (2010), 521--549.
%
\bibitem{KR1} S. Kamin, P. Rosenau, \emph{Propagation of thermal waves in an inhomogeneous medium}, Comm.\ Pure Appl.\ Math.\ \bf 34 \rm (1981), 831--852.
%
\bibitem{KR2} S. Kamin, P. Rosenau, \emph{Nonlinear diffusion in a finite mass medium}, Comm.\ Pure Appl.\ Math.\ \bf 35 \rm (1982), 113--127.
%
\bibitem{KR3} S. Kamin, P. Rosenau, \emph{Nonlinear thermal evolution in an inhomogeneous medium}, J. Math. Physics {\bf 23}, (1982), 1385-1390.


\bibitem{Lieb} G. M. Lieberman, \em Regularized distance and its applications \rm, Pacific Journal of Mathematics \bf Vol. 117, No. 2\rm (1985), 329--352.


\bibitem{MT} A.V. Martynenko, A. F. Tedeev, \em On the behavior of solutions of the Cauchy problem for a degenerate parabolic equation with nonhomogeneous density and a source, \rm (Russian) Zh. Vychisl. Mat. Mat. Fiz. \bf 48  \rm (2008), no. 7, 1214-1229; transl. in Comput. Math. Math. Phys. \bf 48 \rm (2008), no. 7, 1145-1160. 
%
\bibitem{MTS} A.V. Martynenko, A.F. Tedeev, V.N. Shramenko, \em The Cauchy problem for a degenerate parabolic equation with inhomogenous density and a source in the class of slowly vanishing initial functions \rm  (Russian) Izv. Ross. Akad. Nauk Ser. Mat. \bf 76 \rm (2012), no. 3, 139-156; transl. in Izv. Math. \bf 76 \rm (2012), no. 3, 563-580. 

\bibitem{Tesi} G. Meglioli, \emph{Nonlinear parabolic differential equations: global existence and blow-up of solutions} PhD Thesis (2022).

\bibitem{MP1} G. Meglioli, F. Punzo, \emph{ Blow-up and global existence for solutions to the porous medium equation with reaction and slowly decaying density},  \rm J. Diff. Eq. {\bf 269} (2020), 8918--8958\,.

\bibitem{MP2} G. Meglioli, F. Punzo, \em Blow-up and global existence for solutions to the porous medium equation with reaction and fast decaying density, \rm Nonlin. Anal. {\bf 203} (2021), 112187.

\bibitem{MP3} G. Meglioli, F. Punzo, \em Blow-up and global existence for the inhomogeneous porous medium equation with reaction, \rm Rend. Mat. Appl. {\bf 7} (2021).

\bibitem{MQV} N. Mizoguchi, F. Quir\'os, J.L. V\'azquez, \em Multiple blow-up for a porous medium equation with reaction\rm, Math. Ann. \bf 350 \rm (2011), 801--827.

\bibitem{PoT} M.A. Pozio, A. Tesei, \emph{On the uniqueness of bounded solutions to singular parabolic problems}, Discr. Cont. Din. Syst. A \bf 13 \rm (2005), 117--137\,. 

\bibitem{PoPT2} M.A. Pozio, F. Punzo, A. Tesei, \emph{Criteria for well-posedness of degenerate elliptic and parabolic problems}, J. Math. Pures Appl. {\bf 90} (2008), 353--386\,.

\bibitem{PoPT} M.A. Pozio, F. Punzo, A. Tesei, \emph{Uniqueness and nonuniqueness of solutions to parabolic problems with singular coefficients}, Discr. Cont. Din. Syst. A \bf 30 \rm (2011), 891--916\,.

\bibitem{P1} F. Punzo, \emph{On the Cauchy problem for nonlinear parabolic equations with variable density}, J. Evol. Equ. \bf 9 \rm (2009), 429--447.

\bibitem{RV06} G. Reyes, J. L. V\'azquez, \emph{The Cauchy problem for the inhomogeneous porous medium equation}, Netw.\ Heterog.\ Media \bf 1 \rm (2006), 337--351.

\bibitem{RV08} G. Reyes, J. L. V\'azquez, \emph{The inhomogeneous PME in several space dimensions. Existence and uniqueness of finite energy solutions}, Commun.\ Pure Appl.\ Anal.\ \bf 7 \rm (2008), 1275--1294.

\bibitem{RV09} G. Reyes, J. L. V\'azquez, \emph{Long time behavior for the inhomogeneous PME in a medium with slowly decaying density}, Commun.\ Pure Appl.\ Anal.\ \bf 8 \rm (2009), 493--508.

\bibitem{SGKM}  A.A. Samarskii, V.A. Galaktionov, S.P. Kurdyumov, A.P. Mikhailov, \emph{Blow-up in Quasilinear Parabolic Equations}, De Gruyter Expositions in Mathematics, 19. Walter de Gruyter \& Co., Berlin, 1995.

\bibitem{Stein} E.M. Stein, \emph{Singular integrals and differentiability propeties of functions}. Princeton University Press, (1970).



 

\bibitem{Vaz07} J.L. V\'azquez, ``The Porous Medium Equation. Mathematical Theory'', Oxford Mathematical Monographs. The Clarendon Press, Oxford University Press, Oxford, 2007.





\end{thebibliography}
\end{document}